\newtheorem{assumption}{Assumption}
\newtheorem{lemma}{Lemma}
\newtheorem{theorem}{Theorem}
\newtheorem{corollary}{Corollary}
\newcommand{\N}{{\mathbb N}}
\newcommand{\Z}{{\mathbb Z}}
\newcommand{\R}{{\mathbb R}}
\newcommand{\C}{{\mathbb C}}
\newcommand{\A}{{\mathbb A}}
\newcommand{\E}{{\mathbb E}}
\newcommand{\M}{{\mathbb M}}
\newcommand{\D}{{\mathbb D}}
\newcommand{\Dbar}{\overline{\mathbb D}}
\newcommand{\U}{{\mathcal U}}
\newcommand{\Ubar}{\overline{\mathcal U}}
\newcommand{\cercle}{{\mathbb S}^1}
\newcommand{\G}{{\mathcal G}}
\newcommand{\md}{\mathrm{d}}
\newcommand{\bqs}{\begin{equation*}}
\newcommand{\eqs}{\end{equation*}}
\newcommand{\bqq}{\begin{equation}}
\newcommand{\eqq}{\end{equation}}
\renewcommand{\Re}{\mathrm{Re}}
\renewcommand{\Im}{\mathrm{Im}}
\begin{document}

\title{Generalized Gaussian bounds \\
for discrete convolution powers}

\author{Jean-Fran\c{c}ois {\sc Coulombel} \& Gr\'egory {\sc Faye}\thanks{Institut de Math\'ematiques de Toulouse - UMR 5219, Universit\'e de
Toulouse ; CNRS, Universit\'e Paul Sabatier, 118 route de Narbonne, 31062 Toulouse Cedex 9 , France. Research of J.-F. C. was supported
by ANR project Nabuco, ANR-17-CE40-0025. G.F. acknowledges support from the ANR via the project Indyana under grant agreement ANR-21-
CE40-0008-01, from an ANITI (Artificial and Natural Intelligence Toulouse Institute)
Research Chair and from Labex CIMI under grant agreement ANR-11-LABX-0040. Emails: {\tt jean-francois.coulombel@math.univ-toulouse.fr},
{\tt gregory.faye@math.univ-toulouse.fr}}}
\date{\today}
\maketitle

\begin{abstract}
We prove a uniform generalized Gaussian bound for the powers of a discrete convolution operator in one space dimension. Our bound is
derived under the assumption that the Fourier transform of the coefficients of the convolution operator is a trigonometric rational function,
which generalizes previous results that were restricted to trigonometric polynomials. We also allow the modulus of the Fourier transform
to attain its maximum at finitely many points over a period.
\end{abstract}
\bigskip

\noindent {\small {\bf AMS classification:} 42A85, 35K25, 60F99, 65M12.}

\noindent {\small {\bf Keywords:} convolution, difference approximation, stability, local limit theorem.}
\bigskip
\bigskip


For $1 \le q < + \infty$, we let $\ell^q(\Z;\C)$ denote the Banach space of complex valued sequences indexed by $\Z$ and such that the  $\ell^q$ norm,
defined for $u : \Z \to \C$ by
$$
\| \, u \, \|_{\ell^q} \, := \, \left( \, \sum_{j \in \Z} \, |u_j|^q \, \right)^{1/q} \, ,
$$
is finite. We also let $\ell^\infty(\Z;\C)$ denote the Banach space of bounded complex valued sequences indexed by $\Z$ and equipped with
the norm:
$$
\| \, u \, \|_{\ell^\infty} \, := \, \sup_{j \in \Z} \, |u_j| \, .
$$
Throughout this article, we use the notations
\begin{align*}
&\U := \{\zeta \in \C ~|~ |\zeta|>1 \}\, ,\quad \D := \{\zeta \in \C  ~|~ |\zeta|<1 \}\, ,\quad \cercle := \{\zeta \in \C  ~|~ |\zeta|=1 \} \, ,\\
&\Ubar := \U \cup \cercle \, ,\quad \Dbar := \D \cup \cercle \, .
\end{align*}
If $w$ is a complex number and $\rho$ a positive real number, the notation $B_\rho(w)$ stands for the open ball in $\C$ centered at $w$ and with
radius $\rho$, that is $B_\rho(w) := \{ z \in \C \, | \, |z-w|<\rho \}$.

The notation $\sigma (T)$ stands for the spectrum of a bounded operator $T$ acting on a Banach space $E$. We also use the notation
$\| \, \cdot \, \|_{E \to E}$ for the operator norm on a Banach space $E$.

\section{Introduction and main result}
\label{section1}

\subsection{A reminder on Laurent operators}

Let us recall a few facts about Laurent operators on $\ell^q(\Z;\C)$. If $a \in \ell^1(\Z;\C)$, we let $L_a$ denote the so-called Laurent (or convolution)
operator associated with the sequence $a$ \cite{TE,Nikolski}, which is defined by:
\begin{equation}
\label{laurent}
L_a \quad : \quad \big( u_j \big)_{j \in \Z} \, \longmapsto \, \left( \, \sum_{\ell \in \Z} \, a_\ell \, u_{j-\ell} \right)_{j \in \Z} \, = \, a \star u \, ,
\end{equation}
whenever the defining formula \eqref{laurent} for the sequence $L_a \, u$ makes sense. Here and below, $\star$ always stands for the convolution
product of two sequences indexed by $\Z$. In particular, Young's inequality shows that $L_a$ acts boundedly on $\ell^q(\Z;\C)$ for any $q \in [1,+\infty]$:
$$
\forall \, u \in \ell^q(\Z;\C) \, ,\quad \| \, L_a \, u \, \|_{\ell^q} \, \le \, \| \, a \, \|_{\ell^1} \, \| \, u \, \|_{\ell^q} \, .
$$
The spectrum of $L_a$ is also well-understood since the celebrated Wiener-Levy theorem, see \cite{newman}, characterizes the invertible elements
of $\ell^1(\Z;\C)$ for the convolution product (and we have the morphism property $L_a \circ L_b=L_{a \star b}$). Namely, the spectrum of $L_a$ as
an operator acting on $\ell^q(\Z;\C)$ does not depend on $q$ and is nothing but the image of the Fourier transform of the sequence $a$:
$$
\sigma \, (L_a) \, = \, \left\{ \, \sum_{\ell \in \Z} \, a_\ell \, {\rm e}^{\mathbf{i} \, \ell \, \xi} \, | \, \xi \in \R \right\} \, .
$$
Since $a$ belongs to $\ell^1(\Z;\C)$, its Fourier transform is continuous on $\R$. It actually belongs to the so-called Wiener algebra.

Following, among other works, \cite{Thomee,Diaconis-SaloffCoste,RSC1,RSC2}, we are interested here in giving uniform pointwise bounds for the
$n$-th iterated convolution product $a \star \cdots \star a=a^{\star \, n}$ as the number $n$ gets large. We use the convention $a^{\star \, 1}:=a$ and
$a^{\star \, n} :=a^{\star \, (n-1)} \star a$ for $n \ge 2$. Note that, by the morphism property $L_a \circ L_b=L_{a \star b}$, we have $(L_a)^n =L_{a^{\star n}}$
for any $n \in \N$. Beyond their own analytical interest, sharp bounds for the coefficients $(a^{\star \, n})_j$ or the precise description of their asymptotic
behavior are useful in probability theory and in numerical analysis. In probability theory, the coefficients $a_\ell$ correspond to the probability $\mathbb{P}
(X=\ell)$ where $X$ is a random variable with values in $\Z$. Considering the random walk:
$$
Y_n \, := \, X_1 \, + \, \cdots \, + \, X_n \, ,
$$
where the $X_m$'s are identically distributed, independent and follow the same law as $X$, then
$$
(a^{\star \, n})_j \, = \, \mathbb{P}(Y_n=j) \, ,
$$
for any $n \in \N^*$ and $j \in \Z$. In this context, asymptotic expansions for $(a^{\star \, n})_j$ are referred to as \emph{local limit theorems} and may be
found in \cite[Chapter VII]{Petrov}. These expansions involve the Gaussian function and Hermite polynomials. In numerical analysis, the study of the iterates
$(L_a)^n$, $n \in \N$, arises when one discretizes an evolutionary linear partial differential equation (set on the real line $\R$) by means of a finite difference
scheme. The transport equation or the heat equation are typical examples. We refer for instance to \cite{RM,gko}. From Young's inequality:
$$
\| \, (L_a)^n \, u \, \|_{\ell^\infty} \, = \, \| \, L_{a^{\star n}} \, u \, \|_{\ell^\infty} \, \le \, \| \, a^{\star n} \, \|_{\ell^1} \, \| \, u \, \|_{\ell^\infty} \, ,
$$
one observes that boundedness of the sequence $(\| \, a^{\star n} \, \|_{\ell^1})_{n \in \N}$ is a sufficient condition\footnote{It is actually a necessary
and sufficient condition, see \cite{Thomee}.} for what is referred to, in this context, as \emph{stability in the maximum norm}, that is:
$$
\sup_{n \in \N} \, \| \, (L_a)^n \, \|_{\ell^\infty \to \ell^\infty} \, < \, + \infty \, .
$$

The fundamental result in \cite{Thomee} characterizes, under suitable assumptions, the elements $a \in \ell^1(\Z;\C)$ such that the geometric sequence
$(a^{\star \, n})_{n \in \N}$ is bounded in $\ell^1(\Z;\C)$, see also \cite{hedstrom,Despres1,Diaconis-SaloffCoste,RSC2} and references therein for further
developments.  The sufficient part of the characterization in \cite{Thomee} (see also \cite{strang3,Despres1}) is performed by deriving a suitable ``algebraic''
pointwise bound for the coefficient $(a^{\star \, n})_j$ (see \cite[Lemma 2.4]{Thomee}). This bound is obtained by integrating by parts the Fourier transform
of $a^{\star \, n}$, and this manipulation requires the Fourier transform of $a$ to be $\mathcal{C}^2$. For the derivation of these algebraic bounds, the
support of the sequence $a$ may be arbitrary. Refining and optimizing this approach, the algebraic bound in \cite{Thomee} was turned in
\cite{Diaconis-SaloffCoste} into a generalized Gaussian bound, for finitely supported sequences $a$, thanks to a suitable contour deformation. The contours
chosen in \cite{Diaconis-SaloffCoste} can go arbitrarily far away from the real line (where the Fourier transform of $a$ is defined at first), which is the reason
why the authors in \cite{Diaconis-SaloffCoste} assume $a$ to have finite support, so that its Fourier transform extends to a holomorphic function on the
whole complex plane. The necessity of the conditions of \cite[Theorem 1]{Thomee} for $a$ to be power bounded in $\ell^1(\Z;\C)$ is proved under the
assumption that $a$ is finitely supported. This assumption on the support was removed in \cite{hedstrom}.

Our goal in this article is to extend the results in \cite{Diaconis-SaloffCoste} in two directions: we first wish to consider sequences $a$ with
\emph{infinite} support, since such sequences arise when one considers \emph{implicit} discretizations of partial differential equations. We
also wish to relax the assumption made in \cite{Diaconis-SaloffCoste} that the modulus of the Fourier transform of $a$ attains its maximum
at only one point over each period (say, at $0$, in the interval $[-\pi,\pi]$). When the modulus of the Fourier transform attains its maximum
at more than one point over a period, the arguments in either \cite{Thomee} or \cite{Diaconis-SaloffCoste}, rely on ($\mathcal{C}^\infty$)
partitions of unity which destroy the holomorphy of the Fourier transform. This is the reason why the bounds obtained in \cite{Diaconis-SaloffCoste}
in that situation are only of ``sub-Gaussian'' type (compare for instance \cite[Theorem 3.1]{Diaconis-SaloffCoste} with
\cite[Theorem 3.5]{Diaconis-SaloffCoste}). Actually, the results in \cite{Diaconis-SaloffCoste} were first refined and extended in \cite{RSC1},
where local limit theorems are proved for complex valued sequences $a$, and then further extended in \cite{RSC2} to deal with \emph{multidimensional}
situations. Another extension that is achieved in \cite{RSC2} is the proof of generalized Gaussian bounds for finitely supported sequences $a$
whose modulus of the Fourier transforms attains its maximum at several points (under a technical assumption that is discussed below).

Our approach in this article is quite different from the one in \cite{Thomee,Despres1,Diaconis-SaloffCoste,RSC2} where the coefficient $(a^{\star \, n})_j$
is represented by an integral involving the Fourier transform of $a$ (to the $n$-th power). Here we rather follow an approach which is commonly referred
to in the partial differential equation community as ``spatial dynamics'', which amounts to representing $(a^{\star \, n})_j$ in terms of the resolvent of the
operator $L_a$. The link between the two comes from the so-called functional calculus \cite[Chapter VII]{Conway} which expresses the \emph{temporal}
Green's function (here the coefficient $(a^{\star \, n})_j$) in terms of the \emph{spatial} Green's function, which is the solution to the resolvent equation:
$$
\big( \, z \, I \, - \, L_a \, \big) \, u \, = \, \boldsymbol{\delta} \, ,\qquad z \not \in \sigma (L_a) \, ,
$$
where $\boldsymbol{\delta}$ stands for the ``discrete'' Dirac mass ($\boldsymbol{\delta}_j=1$ if $j=0$, and $0$ otherwise). A detailed analysis
of the spatial Green's function with sharp holomorphic extensions and bounds is provided in Section \ref{section2} of the present article under
conditions that are similar to but, to some extent, less restrictive than those in \cite{Diaconis-SaloffCoste,RSC2}. Our main technical assumption
is the fact that we consider nonzero \emph{drift} velocities, which enables us to pass smoothly (that is, holomorphically) from ``temporal'' to
``spatial'' representations. Some cases with vanishing \emph{group velocities} (in the terminology of \cite{trefethen1}) are discussed at the end
of this article.

Once we have sharp holomorphic extensions and bounds for the spatial Green's function, our final argument relies on a suitable choice
of contours in the defining expression of the temporal Green's function. The choice of contours can be interpreted as an application of the
saddle point method \cite{debruijn}. A fundamental contribution in this direction is \cite{ZH98} (for the stability analysis of viscous shock
profiles) and we also refer to \cite{godillon} for an application of this method to the stability analysis of \emph{discrete} shock profiles. As
a matter of fact, our motivations for deriving generalized Gaussian bounds in the broadest possible context stems from the stability analysis
of discrete shock profiles but also from the theory of numerical boundary conditions for hyperbolic equations. An
application of the techniques developed in this article to finite rank perturbations of Toeplitz operators (on $\ell^2(\N;\C)$ rather than $\ell^2(\Z;\C)$)
is given in \cite{CFbord}. Discrete shock profiles will be considered in a forthcoming work. We now make several assumptions and state our
main result.

\subsection{Assumptions and main result}

This work is much inspired by the theory of partial differential equations and its numerical approximations. Hence, instead of sticking to the convolution
operators $L_a$ of the introduction, we shall rather use operators in the form:
$$
S_b \quad : \quad \big( u_j \big)_{j \in \Z} \, \longmapsto \, \left( \, \sum_{\ell \in \Z} \, b_\ell \, u_{j+\ell} \right)_{j \in \Z} \, ,
$$
with $b \in \ell^1(\Z;\C)$. One of the simplest such operators is the so-called shift operator ${\bf S}$ defined by:
$$
{\bf S} \quad : \quad \big( u_j \big)_{j \in \Z} \, \longmapsto \, \big( u_{j+1} \big)_{j \in \Z} \, .
$$
The two definitions of operators $L_a$ and $S_b$ are closely related. Namely, given $b \in \ell^1(\Z;\C)$, we have $S_b = L_a$ where the sequence $a \in
\ell^1(\Z;\C)$ is defined by $a_\ell := b_{-\ell}$ for all $\ell \in \Z$. Our convention, which is different from \cite{Diaconis-SaloffCoste,RSC1,RSC2}, is
the reason for the minus sign in \eqref{hyp:stabilite2} in the term $- \, \mathbf{i} \, \alpha_k \, \xi$ (compare, for instance, with
\cite[equation (3.3)]{Diaconis-SaloffCoste}).

We thus consider from now on two ``convolution'' operators $Q_0$ and $Q_1$ on $\Z$ with \emph{finite support}:
\begin{equation}
\label{operateurs}
\forall \, \sigma \, = \, 0,1 \, , \quad \forall \, j \in \Z \, , \quad (Q_\sigma \, u)_j \, := \, \sum_{\ell=-r}^p \, a_{\ell,\sigma} \, u_{j+\ell} \, ,
\end{equation}
where $r,p \in \N$ and the $a_{\ell,\sigma}$'s are complex numbers\footnote{When discretizing partial differential equations with real coefficients, these numbers
are real.}. In what follows, we always write:
\begin{equation}
\label{defphij}
Q_0 \, = \, L_{\phi_0} \, ,\quad \text{\rm and } \quad Q_1 \, = \, L_{\phi_1} \, ,
\end{equation}
where $\phi_0$ and $\phi_1$ are \emph{finitely supported} elements of $\ell^1(\Z;\C)$. Both operators $Q_0$ and $Q_1$ act boundedly on any $\ell^q(\Z;\C)$,
$1 \le q \le +\infty$. Our main focus below is on the three cases $q=1$, $q=2$ and $q=+\infty$. The integers $r,p$ in \eqref{operateurs} define the common
\emph{stencil} of the operators $Q_0,Q_1$. They are fixed by enforcing the conditions:
$$
| \, a_{-r,1} \, | \, + \, | \, a_{-r,0} \, | \, > \, 0 \, ,\quad | \, a_{p,1} \, | \, + \, | \, a_{p,0} \, | \, > \, 0 \, .
$$
Our first assumption is the following.

\begin{assumption}
\label{hyp:0}
The operator $Q_1$ is an isomorphism on $\ell^2(\Z;\C)$, that is:
\begin{equation}
\label{inversibilite}
\forall \, \kappa \in \cercle \, ,\quad \widehat{Q}_1 (\kappa) \, := \, \sum_{\ell=-r}^p \, a_{\ell,1} \, \kappa^\ell \, \neq \, 0 \, ,
\end{equation}
and it satisfies furthermore the index condition:
\begin{equation}
\label{indice}
\dfrac{1}{2 \, \mathbf{i} \, \pi} \, \int_{\cercle} \dfrac{\widehat{Q}_1' (\kappa)}{\widehat{Q}_1 (\kappa)} \, {\rm d}\kappa \, = \, 0 \, .
\end{equation}
\end{assumption}

The function $\widehat{Q}_1$ in \eqref{inversibilite} is referred to below as the \emph{symbol} of the convolution operator $Q_1$. We can similarly
define the symbol $\widehat{Q}_0$ associated with $Q_0$:
$$
\forall \, \kappa \in \cercle \, ,\quad \widehat{Q}_0 (\kappa) \, := \, \sum_{\ell=-r}^p \, a_{\ell,0} \, \kappa^\ell \, .
$$

Recalling the definition \eqref{defphij} of the sequence $\phi_1$, the condition \eqref{inversibilite} implies that $\phi_1$ is invertible in $\ell^1(\Z;\C)$
(thanks to the Wiener-Levy theorem \cite{newman}). We are then interested in the operator $\mathcal{L} := Q_1^{-1} \, Q_0$ and more specifically in its powers
$\mathcal{L}^n$ as $n$ becomes large. Since $\phi_1$ is invertible in $\ell^1(\Z;\C)$, we can write $\mathcal{L}=L_\phi$ with $\phi := \phi_1^{-1} \star
\phi_0$. Since we are interested in $\mathcal{L} = Q_1^{-1} \, Q_0$, we can always multiply $Q_0$ and $Q_1$ by the same nonzero complex number,
which does not modify $\mathcal{L}$. In view of Assumption \ref{hyp:0}, we thus always assume $\widehat{Q}_1 (1)=1$ from now on.

We briefly discuss the support of the sequence $\phi$ in order to compare our framework with that of \cite{Diaconis-SaloffCoste} or \cite{RSC2}.
The generalized Gaussian bounds in \cite{Diaconis-SaloffCoste} or \cite{RSC2} are obtained by assuming that $\phi$ has finite support (which
makes its Fourier transform an entire function). In our case, two situations occur:
\begin{itemize}
 \item The support of $\phi_1$ is a singleton, that is $Q_1 = a_{\underline{\ell},1} \, {\bf S}^{\underline{\ell}}$ for some integer $\underline{\ell}$ between
 $-r$ and $p$ (recall the notation ${\bf S}$ for the shift operator). Because of the condition \eqref{indice}, we have $\underline{\ell}=0$ so $Q_1$ is a
 nonzero multiple of the identity, and our normalization convention $\widehat{Q}_1 (1)=1$ makes $Q_1$ be the identity. In that case, $\phi=\phi_0$ is
 finitely supported. In numerical analysis, this situation corresponds to \emph{explicit schemes}. In probability theory, this situation corresponds to
 random walks with finite range.
 \item The support of $\phi_1$ contains at least two elements. Then the inverse $\phi_1^{-1}$ of $\phi_1$ for the convolution product has an infinite
 support. Apart from ``trivial'' cases where a factorization is possible, $\phi=\phi_1^{-1} \star \phi_0$ will also have infinite support. An example of this
 situation is provided in Section \ref{section4}. In numerical analysis, this situation corresponds to \emph{implicit schemes}.
\end{itemize}

As can be expected, a crucial role is played below by the symbol of $\mathcal{L}$ which is defined by:
\begin{equation}
\label{defF}
\forall \, \kappa \in \cercle \, , \quad F(\kappa) \, := \, \dfrac{\widehat{Q}_0 (\kappa)}{\widehat{Q}_1 (\kappa)} \, .
\end{equation}
The main difference between \cite{Diaconis-SaloffCoste} or \cite{RSC2} and the present work is that we allow $F(\exp (\mathbf{i}\, \xi))$ to be a trigonometric
rational function of $\xi$ rather than just a trigonometric polynomial in $\xi$. (Other results, such as local limit theorems, are derived in \cite{RSC2}
under the assumption that $F$ is of class $\mathcal{C}^\infty$ on $\cercle$, but we focus here on the derivation of generalized Gaussian bounds.)
In other words, we deal here with the class of sequences in $\ell^1(\Z;\C)$ whose Fourier transforms are trigonometric rational functions. The following
assumption on $F$ is inspired by the fundamental contribution \cite{Thomee}. The link between our Assumption \ref{hyp:1} below and the classification
obtained in \cite[page 280]{Thomee} in the case of trigonometric polynomials is discussed in Appendix \ref{appendix} at the end of this article (see Lemma
\ref{lem:comportementF}).

\begin{assumption}
\label{hyp:1}
The function $F$ defined in \eqref{defF} satisfies $\max_{\kappa \in \cercle} \, |F(\kappa)|=1$. Furthermore, there exists a finite set of points
$\{ \underline{\kappa}_1,\dots,\underline{\kappa}_K \}$, $K \ge 1$, in $\cercle$ such that:
\begin{equation}
\label{hyp:stabilite1}
\forall \, \kappa \in \cercle \setminus \big\{ \underline{\kappa}_1,\dots,\underline{\kappa}_K \big\} \, ,\quad \big| \, F(\kappa) \, \big| \, < \, 1 \, ,
\end{equation}
and for all index $k=1,\dots,K$, $F(\underline{\kappa}_k)$ belongs to $\cercle$. Moreover, for any $k=1,\dots,K$, there exist a nonzero real number
$\alpha_k$, an even integer $2\, \mu_k \ge 2$ and a complex number $\beta_k$ with positive real part such that:
\begin{equation}
\label{hyp:stabilite2}
\dfrac{F \Big( \underline{\kappa}_k \, {\rm e}^{\, \mathbf{i} \, \xi} \Big)}{F(\underline{\kappa}_k)} \, = \,
\exp \left( \, - \, \mathbf{i} \, \alpha_k \, \xi \, - \, \beta_k \, \xi^{\, 2 \, \mu_k} \, + \, O \Big( \xi^{\, 2 \, \mu_k+1} \Big) \right) \, ,
\end{equation}
as $\xi$ tends to $0$.
\end{assumption}

As in \cite{Thomee,Diaconis-SaloffCoste,RSC2}, the maximum of $F$ on the unit circle $\cercle$ is normalized to be $1$. Thanks to Beurling's
result (see \cite[page 428]{riesznagy}):
$$
\forall \, a \in \ell^1(\Z;\C) \, ,\quad \lim_{n \to \infty} \, \| \, a^{\star n} \, \|_{\ell^1}^{1/n} \, = \,
\max_{\theta \in \R} \, \left| \sum_{\ell \in \Z} \, a_\ell \, {\rm e}^{\mathbf{i} \, \ell \, \theta} \, \right| \, ,
$$
the case where the maximum equals $1$ is the limit case where the question of stability is not straightforward. In view of the result of Lemma
\ref{lem:comportementF} in Appendix \ref{appendix}, we just wish to exclude in Assumption \ref{hyp:1} the case where $F$ has constant
modulus on $\cercle$, and we then only consider the so-called points of type $\gamma$ in the terminology of \cite{Thomee}. The number $\alpha_k$
in \eqref{hyp:stabilite2} is necessarily real since $F(\kappa)$ belongs to $\Dbar$ for all $\kappa \in \cercle$. The fact that all real numbers $\alpha_k$
are nonzero is a major assumption that we make. It is fundamental below in the description of the so-called spatial Green's function. Examples of 
operators $Q_0,Q_1$ for which Assumptions \ref{hyp:0} and \ref{hyp:1} are satisfied are provided in Section \ref{section4} at the end of this article.

Since $F(\kappa)$ belongs to $\Dbar$ for all $\kappa \in \cercle$, the operator $\mathcal{L}$ is a contraction on $\ell^2(\Z;\C)$, that is:
$$
\forall \, u \in \ell^2 (\Z \, ; \, \C) \, ,\quad \| \, \mathcal{L} \, u \, \|_{\ell^2} \, \le \, \| \, u \, \|_{\ell^2} \, ,
$$
since the $\ell^2$ norms on both sides can be computed by the Parseval-Bessel identity. Of course, this implies that every power of $\mathcal{L}$
is also a contraction on $\ell^2(\Z;\C)$. In the field of numerical analysis, this property is referred to as $\ell^2$-stability\footnote{For scalar problems,
this is even equivalent to the so-called von Neumann stability condition \cite{RM,gko}.}, or strong stability \cite{strang1,tadmor}, for the ``numerical
scheme'':
$$
\begin{cases}
Q_1 \, u^{n+1} \, = \, Q_0 \, u^n \, ,& n \in \N \, ,\\
u^0 \in \ell^2(\Z) \, .&
\end{cases}
$$

Let us now define the quantities:
\begin{equation}
\label{defAl}
\forall \, z \in \C \, ,\quad \forall \, \ell=-r,\dots,p \, ,\quad \A_\ell (z) \, := \, z \, a_{\ell,1} \, - \, a_{\ell,0} \, .
\end{equation}
The following assumption already appears in several works devoted to the stability analysis of \emph{numerical boundary conditions} for discretized
hyperbolic equations, see, e.g., \cite{kreiss1,osher1,gks,goldberg-tadmor,jfcnotes} and references therein. Not only does it determine the minimal
integers $r$ and $p$ in \eqref{operateurs} (by prohibiting to add artificial zero coefficients), but it is also crucially used below to analyze the so-called
resolvent equation \eqref{defGz}. It might be relaxed though, but a more elaborate analysis would be required.

\begin{assumption}
\label{hyp:2}
The functions $\A_{-r}$ and $\A_p$ defined in \eqref{defAl} do not vanish on $\Ubar$.
\end{assumption}

For instance, if $a_{-r,1}$ and $a_{p,1}$ are nonzero, Assumption \ref{hyp:2} means that $a_{-r,0}/a_{-r,1}$ and $a_{p,0}/a_{p,1}$ belong to $\D$.
If $a_{-r,1}$ and $a_{p,1}$ are both zero, then $a_{-r,0}$ and $a_{p,0}$ should both be nonzero.

Thanks to Assumption \ref{hyp:2}, we can define the following companion matrix:
\begin{equation}
\label{defM}
\M (z) \, := \, \begin{bmatrix}
-\dfrac{\A_{p-1}(z)}{\A_p(z)} & \dots & \dots & -\dfrac{\A_{-r}(z)}{\A_p(z)} \\
1 & 0 & \dots & 0 \\
0 & \ddots & \ddots & \vdots \\
0 & 0 & 1 & 0 \end{bmatrix} \in \mathcal{M}_{p+r}(\C) \, ,
\end{equation}
which is holomorphic on the set $\{ z \in \C \, | \, |z| \, > \, \exp(-\underline{\eta}) \}$ for some parameter $\underline{\eta}>0$ which only depends
on the location of the root of $\A_p$ (if it exists). A crucial observation is that the upper right coefficient of $\M(z)$ is always nonzero, because of
Assumption \ref{hyp:2} and up to restricting $\underline{\eta}$, so the matrix $\M(z)$ is invertible for all relevant values of $z$. We shall repeatedly
use the inverse matrix $\M(z)^{-1}$ in what follows.

The analysis in this article heavily relies on a precise description of the spectrum of $\M(z)$ as $z$ runs through $\Ubar$ (and even sometimes slightly
through $\D$). This description is given in Lemma \ref{lem:1} below, and uses the following two assumptions.

\begin{assumption}
\label{hyp:3}
Either $Q_1$ is the identity, or $a_{-r,1}$ and $a_{p,1}$ are nonzero.
\end{assumption}

\noindent If $Q_1$ is the identity, Assumption \ref{hyp:0} is trivially satisfied since $\widehat{Q}_1 \equiv 1$. In the other case,
the complex numbers $a_{-r,1}$ and $a_{p,1}$ are both nonzero. In that case, $\widehat{Q}_1$ is a meromorphic function on $\C$ with a single pole
(that is located at $0$) of order $r$. (When $r$ equals zero, there is no pole.) By the residue theorem \cite{rudin}, the index condition \eqref{indice} is
equivalent to $\widehat{Q}_1$ having $r$ zeros (counted with multiplicity) in $\D \setminus \{ 0 \}$. Because $\kappa^r \, \widehat{Q}_1(\kappa)$ is a
polynomial of degree $p+r$, $\widehat{Q}_1$ then has $p$ zeros in $\U$.

\begin{assumption}
\label{hyp:4}
For all index $k=1,\dots,K$, let us define $\underline{z}_k \, := \, F(\underline{\kappa}_k) \in \cercle$. Then for any $k=1,\dots,K$, the set:
\begin{equation}
\label{defIk}
\mathcal{I}_k \, := \, \Big\{ \nu \in \{ 1,\dots,K \} \, | \, \underline{z}_\nu \, = \, \underline{z}_k \Big\}
\end{equation}
has either one or two elements\footnote{Note that $\mathcal{I}_k$ always contains $\{ k \}$.}. Furthermore, in case it has two elements, which we denote
$\nu_{k,1},\nu_{k,2}$, then $\alpha_{\nu_{k,1}} \, \alpha_{\nu_{k,2}} \, < \, 0$. (Let us recall that the drift parameters $\alpha_k$ are given in Assumption
\ref{hyp:1}.)
\end{assumption}

From now on, we always make Assumptions \ref{hyp:0}, \ref{hyp:1}, \ref{hyp:2}, \ref{hyp:3} and \ref{hyp:4}. Our main result is a partial extension
of \cite[Theorem 3.1]{Diaconis-SaloffCoste} and \cite[Theorem 1.8]{RSC2}. It gives a uniform, generalized Gaussian bound for the convolution
coefficients of the powers $\mathcal{L}^n$. A precise statement is the following.

\begin{theorem}
\label{thm:1}
Let the operators $Q_0,Q_1$ in \eqref{operateurs} satisfy Assumptions \ref{hyp:0}, \ref{hyp:1}, \ref{hyp:2}, \ref{hyp:3} and \ref{hyp:4} and the normalization
condition $\widehat{Q}_1(1)=1$. According to the above two cases in Assumption~\ref{hyp:3}, we have:
\begin{itemize}
\item {\bf Explicit case.} If $Q_1$ is the identity, then there
exist two constants $C>0$ and $c>0$ such that the operator $\mathcal{L}=Q_0$ satisfies the uniform generalized Gaussian bound:
\begin{equation}
\label{bound}
\forall \, n \in \N^* \, ,\quad \forall \, j \in \Z \, ,\quad \big| \, (\mathcal{L}^n \, \boldsymbol{\delta})_j \, \big| \, \le \, C \, \, \sum_{k=1}^K \,
\dfrac{1}{n^{1/(2\, \mu_k)}} \, \exp \left( - \, c \, \left( \dfrac{|j \, - \, \alpha_k \, n|}{n^{1/(2\, \mu_k)}} \right)^{\frac{2\, \mu_k}{2\, \mu_k-1}} \right) \, ,
\end{equation}
where $\boldsymbol{\delta}$ denotes the discrete Dirac mass defined by $\boldsymbol{\delta}_j=1$ if $j=0$ and $\boldsymbol{\delta}_j=0$
otherwise.
\item {\bf Implicit case.} If $Q_1$ is not the identity, then there
exist constants $C>0$, $L>0$ and $c>0$ such that the operator $\mathcal{L}=Q_1^{-1} \, Q_0$ satisfies the  bounds:
\begin{subequations}
\begin{align}
\forall \, n \in \N^* \, ,\quad \forall \, |j| \leq L \, n \, ,\quad \big| \, (\mathcal{L}^n \, \boldsymbol{\delta})_j \, \big| \, & \le \, C \, \, \sum_{k=1}^K \,
\dfrac{1}{n^{1/(2\, \mu_k)}} \, \exp \left( - \, c \, \left( \dfrac{|j \, - \, \alpha_k \, n|}{n^{1/(2\, \mu_k)}} \right)^{\frac{2\, \mu_k}{2\, \mu_k-1}} \right) \, , \\
\forall \, n \in \N^* \, ,\quad \forall \, |j| >  L \, n \, ,\quad \big| \, (\mathcal{L}^n \, \boldsymbol{\delta})_j \, \big| \, & \le \, C \, \,  \exp \left( -c \, n \, - \, c \, |j| \right) \, .
\end{align}
\label{boundimp}
\end{subequations}
\end{itemize}
\end{theorem}

A comparison between Theorem \ref{thm:1} and the analogous results in \cite{Thomee,Diaconis-SaloffCoste,RSC2} is provided in the next Subsection.
Otherwise, the rest of this article is organized as follows. In Section \ref{section2}, we prove sharp bounds on the so-called spatial Green's function.
This is where Assumptions \ref{hyp:2}, \ref{hyp:3} and \ref{hyp:4} are used. Then we use these preliminary bounds in Section \ref{section3} to obtain
the uniform bounds \eqref{bound} and \eqref{boundimp} for what we call the temporal Green's function. Examples and possible extensions are given
in Section \ref{section4}. The proofs of some intermediate and related results are gathered in Appendix \ref{appendix}.

\subsection{What is new ? and what is not ?}

Let us first observe that the sequence $\mathcal{G} := \mathcal{L} \, \boldsymbol{\delta}$ corresponds to the Laurent series expansion of $F$ near $\cercle$:
$$
\forall \, \kappa \in \cercle \, ,\quad F(\kappa) \, = \, \sum_{j \in \Z} \, \mathcal{G}_j \, \kappa^j \, .
$$
In particular, if $\mathcal{G}$ satisfies a generalized Gaussian bound of the form:
$$
\forall \, j \in \Z \, ,\quad | \, \mathcal{G}_j \, | \, \le \, C \, \exp ( -\, c \, |j|^s \, ) \, ,
$$
for some positive constants $C$ and $c$ and some exponent $s>1$, then $F$ extends to a holomorphic function on $\C \setminus \{ 0 \}$. In our framework,
we have $F(\kappa)=\widehat{Q}_0(\kappa)/\widehat{Q}_1(\kappa)$. Assuming that $\widehat{Q}_0(\kappa)$ and $\widehat{Q}_1(\kappa)$ have no common
factor, the only possible case where $F$ extends to a holomorphic function on $\C \setminus \{ 0 \}$ is when $\widehat{Q}_1$ does not vanish on $\C \setminus
\{ 0 \}$. Because of the form of $\widehat{Q}_1$ and the index condition \eqref{indice}, the only situation in which $F$ extends to a holomorphic function on $\C
\setminus \{ 0 \}$ is when $Q_1$ is the identity\footnote{We assume, of course, that $Q_0$ and $Q_1$ are irreducible.}. This argument explains why, in
\eqref{boundimp}, the bound for $(\mathcal{L}^n \, \boldsymbol{\delta})_j$ ``degenerates'' to $\exp (- \, c \, |j|)$ for any fixed $n$ (e.g., $n=1$) and for large $j$'s.

We now compare Theorem \ref{thm:1} with \cite[Theorem 3.1]{Diaconis-SaloffCoste} and \cite[Theorem 1.8]{RSC2} which, to our knowledge, are the two prior
references on generalized Gaussian bounds for convolution powers of complex sequences. In \cite[Theorem 3.1]{Diaconis-SaloffCoste}, the authors consider
(in our notation) the \emph{explicit} case ($Q_1$ is the identity) with $K=1$, but they make no assumption on the drift parameter $\alpha_1$ (while we assume
$\alpha_1 \neq 0$ in Theorem \ref{thm:1}). When specifying \cite[Theorem 1.8]{RSC2} to one space dimension, the result in \cite[Theorem 1.8]{RSC2} covers
the \emph{explicit} case ($Q_1$ is the identity) with $K \ge 1$, but it is then further assumed:
$$
\alpha_1 \, = \, \cdots \, = \, \alpha_K \, ,\quad \mu_1 \, = \, \cdots \, = \, \mu_K \, ,\quad \beta_1 \, = \, \cdots \, = \, \beta_K \, .
$$
As in \cite[Theorem 3.1]{Diaconis-SaloffCoste}, it is not assumed in \cite[Theorem 1.8]{RSC2} that the common value of the $\alpha_k$'s should be nonzero.

In our opinion, the main novelty here consists in considering sequences $a$ such that the $\alpha_k$'s, $\mu_k$'s and $\beta_k$'s are, to some extent, ``arbitrary''.
This is not entirely true since we assume that the $\underline{z}_k$'s are not ``too much'' equal (Assumption \ref{hyp:4}) and we further assume that all $\alpha_k$'s
are nonzero. We believe that this restriction on the $\alpha_k$'s in Theorem \ref{thm:1} is purely technical. For instance, in Corollary~\ref{coro2}, we give an
example of a general situation in the explicit case where some $\alpha_k$ can be zero and where the result of Theorem \ref{thm:1} can be used to obtain the same 
bound as in \eqref{bound}. We also note that the theory of numerical boundary conditions in \cite{gks} (see \cite{jfcnotes} for a thorough exposition) covers cases with 
vanishing group velocities (by making use in several occurrences of Puiseux expansions). In the continuous setting, reaction diffusion equations are another occurence 
where spatial dynamics and pointwise Green's function bounds cover some problems with zero drift velocities, see, e.g., \cite{FayeHolzer}. We thus hope that we shall 
be able to fully remove the restriction $\alpha_k \neq 0$ as well as Assumption \ref{hyp:4} in the future.

Our second improvement is to consider sequences in $\ell^1(\Z;\C)$ whose Fourier transforms are trigonometric rational functions, which is relevant for implicit
numerical schemes. Possible extensions of our work are listed in Section \ref{section4}.

\section{The spatial Green's function}
\label{section2}

The spectrum of $\mathcal{L}$ as an operator on $\ell^2(\Z;\C)$ is the parametrized curve $F(\cercle)$. We know from Assumption \ref{hyp:1}
that this curve touches the unit circle $\cercle$ at the points $\underline{z}_k$, $k=1,\dots,K$, and that it is located inside the open unit disk
$\D$ otherwise. Hence the resolvent set of $\mathcal{L}$ contains at least $\Ubar \setminus \{ \underline{z}_1,\dots,\underline{z}_K \}$. For
such values of $z$, we can thus define the sequence $G(z) \in \ell^2(\Z;\C)$ (the capital $G$ letter stands for Green, as in Green's function)
by the formula:
\begin{equation}
\label{defGz}
\big( z \, I \, - \, \mathcal{L} \big) \, G(z) \, = \, \boldsymbol{\delta} \, ,
\end{equation}
where we recall that $\boldsymbol{\delta}$ stands for the Dirac mass ($\boldsymbol{\delta}_j=1$ if $j=0$ and $\boldsymbol{\delta}_j=0$ if
$j \in \Z \setminus \{ 0 \}$).

From the definition $\mathcal{L} = Q_1^{-1} \, Q_0$, the equation \eqref{defGz} can be equivalently rewritten:
$$
\big( z \, Q_1 \, - \, Q_0 \big) \, G(z) \, = \, Q_1 \, \boldsymbol{\delta} \, ,
$$
and the definitions \eqref{operateurs}, \eqref{defAl} give the final form:
\begin{equation}
\label{resolvent}
\forall \, j \in \Z \, ,\quad \sum_{\ell=-r}^p \, \A_\ell (z) \, G_{j+\ell}(z) \, = \, (Q_1 \, \boldsymbol{\delta})_j \, ,
\end{equation}
together with the integrability conditions at infinity $G(z) \in \ell^2(\Z;\C)$.

\subsection{Spectral properties}

We introduce the augmented vectors:
$$
\forall \, j \in \Z \, ,\quad W_j(z) \, := \, \begin{bmatrix}
G_{j+p-1}(z) \\
\vdots \\
G_{j-r}(z) \end{bmatrix} \in \C^{p+r} \, ,\quad {\bf e} \, := \, \begin{bmatrix}
\, 1 \, \\
\, 0 \, \\
\vdots \\
\, 0 \, \end{bmatrix} \in \C^{p+r} \, ,
$$
and rewrite equivalently \eqref{resolvent} as:
\begin{equation}
\label{resolvent'}
\forall \, j \in \Z \, ,\quad W_{j+1}(z) \, - \, \M(z) \, W_j(z) \, = \, \dfrac{(Q_1 \, \boldsymbol{\delta})_j}{\A_p(z)} \, {\bf e} \, .
\end{equation}

The construction and analysis of the solution to the recurrence relation \eqref{resolvent'} relies on the following spectral splitting lemma, which is
originally due to Kreiss \cite{kreiss1} in the context of finite difference approximations.

\begin{lemma}[Spectral splitting]
\label{lem:1}
Let $z \in \Ubar \setminus \{ \underline{z}_1,\dots,\underline{z}_K \}$ and let the matrix $\M(z)$ be defined as in \eqref{defM}. Then $\M(z)$ has:
\begin{itemize}
 \item no eigenvalue on $\cercle$,
 \item $r$ eigenvalues in $\D \setminus \{ 0 \}$,
 \item $p$ eigenvalues in $\U$ (eigenvalues are counted with multiplicity).
\end{itemize}

Let now $k \in \{1,\dots,K\}$ be such that the set $\mathcal{I}_k$ in \eqref{defIk} is the singleton $\{ k \}$. Then if $\alpha_k>0$, the matrix
$\M(\underline{z}_k)$ has $\underline{\kappa}_k \in \cercle$ as a simple eigenvalue, it has $r-1$ eigenvalues in $\D$ and $p$ eigenvalues in
$\U$. If $\alpha_k<0$, the matrix $\M(\underline{z}_k)$ has $\underline{\kappa}_k \in \cercle$ as a simple eigenvalue, it has $r$ eigenvalues
in $\D$ and $p-1$ eigenvalues in $\U$.

Eventually, let now $k \in \{1,\dots,K\}$ be such that the set $\mathcal{I}_k$ in \eqref{defIk} has two elements $\nu_{k,1},\nu_{k,2}$. Then the matrix
$\M(\underline{z}_k)$ has $\underline{\kappa}_{\nu_{k,1}}$ and $\underline{\kappa}_{\nu_{k,2}}$ as simple eigenvalues on $\cercle$, it has $r-1$
eigenvalues in $\D$ and $p-1$ eigenvalues in $\U$.
\end{lemma}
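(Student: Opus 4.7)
The plan is to identify the eigenvalues of $\M(z)$ with the nonzero roots of
\[
P_z(\kappa) \, := \, \sum_{\ell=-r}^p \, \A_\ell(z) \, \kappa^{\ell+r} \, = \, \kappa^r \, \Big( z \, \widehat{Q}_1(\kappa) \, - \, \widehat{Q}_0(\kappa) \Big) \, ,
\]
which, by Assumption \ref{hyp:2}, is a polynomial of degree exactly $p+r$ on $\Ubar$ (with leading coefficient $\A_p(z)$ and constant term $\A_{-r}(z)$ both nonzero). Thus $\M(z)$ has $p+r$ eigenvalues, all nonzero, and they coincide with the solutions of $F(\kappa) = z$ whenever $\widehat{Q}_1(\kappa) \neq 0$. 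Combining this with Assumption \ref{hyp:1} -- which gives $|F|<1$ on $\cercle$ off $\{\underline{\kappa}_1,\dots,\underline{\kappa}_K\}$ and $F(\underline{\kappa}_k) = \underline{z}_k$ -- immediately yields that for $z \in \Ubar$, an eigenvalue lies on $\cercle$ if and only if $z \in \{\underline{z}_1,\dots,\underline{z}_K\}$, the eigenvalues on $\cercle$ at $z = \underline{z}_k$ being precisely $\{\underline{\kappa}_\nu \, / \, \nu \in \mathcal{I}_k\}$.

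Next, to count eigenvalues in $\D$ versus $\U$ for $z \in \Ubar \setminus \{\underline{z}_1,\dots,\underline{z}_K\}$, I would argue by connectedness: no eigenvalue of $\M(z)$ can cross $\cercle$ on that connected open set, so the count is constant there and can be computed for $|z|$ large. Writing $P_z = z \, p_1 - p_0$ with $p_i(\kappa) := \kappa^r \widehat{Q}_i(\kappa)$, Assumption \ref{hyp:3} ensures $p_1$ does not vanish on $\cercle$ (either $p_1(\kappa) = \kappa^r$ when $Q_1 = I$, or $p_1$ is a polynomial of degree $p+r$ with no root on $\cercle$ when $a_{-r,1} a_{p,1} \neq 0$). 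Rouché's theorem on $\cercle$ then gives, for $|z|$ large, the same zero count in $\D$ for $P_z$ as for $z \, p_1$, namely $r$: when $Q_1 = I$ this is the $r$-fold zero at $0$ of $\kappa^r$, and when $Q_1 \neq I$ the index condition \eqref{indice} translates, via the argument principle applied to $\widehat{Q}_1 = \kappa^{-r} p_1$, into the statement that $p_1$ has exactly $r$ roots in $\D$. Since $P_z$ has no root on $\cercle$ for $|z| > 1$, the remaining $p$ roots must lie in $\U$.

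For the boundary values $z = \underline{z}_k$, the local structure at each $\underline{\kappa}_\nu$ with $\nu \in \mathcal{I}_k$ is controlled by \eqref{hyp:stabilite2}. Differentiating that expansion and applying the chain rule yields $F'(\underline{\kappa}_\nu) = - \alpha_\nu \, \underline{z}_k / \underline{\kappa}_\nu$, which is nonzero because $\alpha_\nu \neq 0$; this says that $\underline{\kappa}_\nu$ is a simple root of $P_{\underline{z}_k}$, hence a simple eigenvalue of the companion matrix $\M(\underline{z}_k)$, and by the implicit function theorem extends to a holomorphic branch $z \mapsto \kappa_\nu(z)$. Parameterizing $z = \underline{z}_k \, \mathrm{e}^s$ with $s \in \R$ and $\kappa_\nu(z) = \underline{\kappa}_\nu \, \mathrm{e}^{\mathbf{i} \, \xi(s)}$, inverting \eqref{hyp:stabilite2} at leading order gives $\xi'(0) = \mathbf{i}/\alpha_\nu$ and hence $|\kappa_\nu(z)| = \mathrm{e}^{-s/\alpha_\nu}$ to first order. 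Consequently, as $z$ leaves $\underline{z}_k$ into $\U$ (that is, $s \to 0^+$), the branch comes from $\D$ when $\alpha_\nu > 0$ and from $\U$ when $\alpha_\nu < 0$. Matching this against the constant count $(r,p)$ on $\U$ produces the three subcases of the statement; the delicate point, and the precise reason for Assumption \ref{hyp:4}, arises in the two-element case $\mathcal{I}_k = \{\nu_1, \nu_2\}$, where the sign condition $\alpha_{\nu_1} \alpha_{\nu_2} < 0$ is exactly what guarantees that one eigenvalue is supplied by $\D$ and the other by $\U$, yielding the balanced split $r-1$ in $\D$ and $p-1$ in $\U$ rather than a degenerate one.
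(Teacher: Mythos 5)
Your proof is correct and follows essentially the same route as the paper: identify the eigenvalues of $\M(z)$ with the roots of the dispersion relation $z\,\widehat{Q}_1(\kappa)=\widehat{Q}_0(\kappa)$, fix the $(r,p)$ count on the connected set $\Ubar\setminus\{\underline{z}_1,\dots,\underline{z}_K\}$ by going to $|z|$ large (you use Rouch\'e plus the index condition where the paper passes to the limit matrix of $\M(z)$ at infinity, a cosmetic difference), and then track the simple critical eigenvalue(s) off $\cercle$ via the expansion \eqref{hyp:stabilite2} exactly as in the paper's Cases I--III. The only slip is attributional: the non-vanishing of $\kappa^r\widehat{Q}_1(\kappa)$ on $\cercle$ in the implicit case comes from Assumption \ref{hyp:0} (i.e.\ \eqref{inversibilite}), while Assumption \ref{hyp:3} is what gives the pole of order exactly $r$ at the origin and the degree $p+r$; this does not affect the validity of the argument.
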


\noindent The arguments are basically the same as in \cite{kreiss1} but we give them here for the sake of completeness.

\begin{proof}[Proof of Lemma \ref{lem:1}]
We first recall that the matrix $\M(z)$ is given by \eqref{defM} and that it is invertible for all $z$ satisfying $|z|>\exp(-\underline{\eta})$ (thanks to
Assumption \ref{hyp:2}). Hence $0$ will never be an eigenvalue of $\M(z)$ for the relevant values of $z$. Let us then observe that $\kappa \in \C
\setminus \{ 0 \}$ is an eigenvalue of $\M(z)$ for $z\in \overline{\mathcal{U}}$ if and only if $z$ and $\kappa$ satisfy the so-called dispersion
relation:
$$
\sum_{\ell=-r}^p \, \A_\ell(z) \, \kappa^\ell \, = \, 0 \, ,
$$
and the definition \eqref{defAl} of the functions $\A_\ell$ yields the equivalent form:
\begin{equation}
\label{relation-valeurs-propres}
\widehat{Q}_1(\kappa) \, z \, = \, \widehat{Q}_0(\kappa) \, .
\end{equation}
In particular, for any $z$ in the connected set $\Ubar \setminus \{ \underline{z}_1,\dots,\underline{z}_K \}$, $\M(z)$ has no eigenvalue on the unit circle
$\cercle$ for otherwise we would have $z=F(\kappa)$ for some $\kappa \in \cercle$ and $z \not \in \{ \underline{z}_1,\dots,\underline{z}_K \}$, which is
precluded by Assumption \ref{hyp:1}. To obtain the first statement of Lemma \ref{lem:1}, it thus remains to count the number of eigenvalues of $\M(z)$
in $\D \setminus \{ 0 \}$ (we shall call such eigenvalues the \emph{stable} ones). By the connectedness of $\Ubar \setminus \{ \underline{z}_1,\dots,\underline{z}_K
\}$, the number of stable eigenvalues of $\M(z)$ does not depend on $z \in \Ubar \setminus \{ \underline{z}_1,\dots,\underline{z}_K \}$. In order to compute
the precise number of such eigenvalues, we shall let $z$ tend to infinity and determine the asymptotic behavior of these eigenvalues. This asymptotic
behavior differs completely between the explicit and implicit cases (though the number of stable eigenvalues will be the same in both cases), which is
the reason why we now deal with those two cases separately.

\paragraph{The explicit case ($Q_1=I$).} The dispersion relation \eqref{relation-valeurs-propres} then reduces to:
\begin{equation}
\label{relation-valeurs-propres'}
z \, = \, \sum_{\ell=-r}^p \, a_{\ell,0} \, \kappa^\ell \, .
\end{equation}
If $r=0$, then there are no eigenvalues in $\D \setminus \{ 0 \}$ for any $z$ for otherwise there would be at least one eigenvalue in $\D \setminus \{ 0 \}$
for all $z \in \U$ and the triangle inequality in \eqref{relation-valeurs-propres'} would imply:
$$
| \, z \, | \, \le \, \sum_{\ell=0}^p \, | \, a_{\ell,0} \, | \, .
$$
which is impossible because $|z|$ can be arbitrarily large. The result is thus proved in the case $r=0$ so we assume $r \ge 1$ from now on (Assumption
\ref{hyp:2} then yields $a_{-r,0} \neq 0$). Following \cite{kreiss1} (see also \cite{jfcnotes} for the complete details), the number of eigenvalues of $\M(z)$
in $\D \setminus \{ 0 \}$ is computed by letting $z$ tend to infinity for in that case, all such (stable) eigenvalues of $\M(z)$ collapse to zero. Indeed, an
eigenvalue of $\M(z)$ in $\D \setminus \{ 0 \}$ cannot remain uniformly away from the origin for otherwise the right hand side of \eqref{relation-valeurs-propres'}
would remain bounded while the left hand side tends to infinity.

The final argument is the following (see \cite[Theorem 4.2.1]{hinrichsen2005mathematical} for a general statement). For any $z \in \Ubar \setminus \{
\underline{z}_1,\dots,\underline{z}_K \}$, the eigenvalues of $\M(z)$ are those $\kappa \neq 0$ such that:
$$
\kappa^r \, = \, \dfrac{1}{z} \, \sum_{\ell=-r}^p \, a_{\ell,0} \, \kappa^{r+\ell} \, ,
$$
which is just an equivalent way of writing \eqref{relation-valeurs-propres'}. Hence for $z$ large, the small eigenvalues of $\M(z)$ behave at the leading
order like the roots of the reduced equation:
$$
\kappa^r \, = \, \dfrac{a_{-r,0}}{z} \, ,
$$
and there are exactly $r$ distinct roots close to $0$ of that equation. Hence $\M(z)$ has $r$ eigenvalues in $\D \setminus \{ 0 \}$ for any $z \in \Ubar
\setminus \{ \underline{z}_1,\dots,\underline{z}_K \}$.
\bigskip

\paragraph{The implicit case ($Q_1 \neq I$).} We then know that $a_{-r,1} \neq 0$ and $a_{p,1} \neq 0$. Moreover, the function $\widehat{Q}_1$ satisfies
the index condition \eqref{indice}. By the residue theorem \cite{rudin}, this means that $\widehat{Q}_1$ has as many poles as roots in $\D$ and since it
only has a pole of order $r$ at $0$, we can conclude that $\widehat{Q}_1$ has $r$ roots in $\D \setminus \{ 0 \}$. Since $\kappa^r \, \widehat{Q}_1(\kappa)$
is a polynomial of degree $p+r$, we also conclude that $ \widehat{Q}_1$ has $p$ roots in $\U$, as already explained in the introduction.

From the definition \eqref{defAl}, we compute:
$$
\lim_{z \to \infty} \, \M(z) \, = \, \begin{bmatrix}
-\dfrac{a_{p-1,1}}{a_{p,1}} & \dots & \dots & -\dfrac{a_{-r,1}}{a_{p,1}} \\
1 & 0 & \dots & 0 \\
0 & \ddots & \ddots & \vdots \\
0 & 0 & 1 & 0 \end{bmatrix} \, ,
$$
and the eigenvalues of that (invertible) matrix are exactly those $\kappa \neq 0$ that satisfy $\widehat{Q}_1(\kappa)=0$. Hence for any sufficiently large
$z$, $\M(z)$ has $r$ eigenvalues in $\D \setminus \{ 0 \}$ and $p$ eigenvalues in $\U$ (which are close to the roots of $\widehat{Q}_1$). This completes
the proof of the first statement in Lemma \ref{lem:1}. It now remains to examine the situation at the points $\underline{z}_k$, $k=1,\dots,K$. The arguments
below are the same for the explicit and implicit cases so we stop distinguishing between the two from now on. We thus consider a point $\underline{z}_k$
for $1 \le k \le K$.
\bigskip

\paragraph{Case I.} We assume that the index $k \in \{ 1,\dots,K \}$ is such that the set $\mathcal{I}_k$ in \eqref{defIk} is the singleton $\{ k \}$ and
we assume for now $\alpha_k>0$ in \eqref{hyp:stabilite2}. Since the eigenvalues of $\M(\underline{z}_k)$ are the roots of the dispersion relation:
$$
\widehat{Q}_1(\kappa) \, \underline{z}_k \, = \, \widehat{Q}_0(\kappa) \, ,
$$
we first observe that the only eigenvalue of $\M(\underline{z}_k)$ on $\cercle$ is $\underline{\kappa}_k$ and we are now going to show that this
eigenvalue is algebraically (and therefore geometrically) simple. The relation \eqref{hyp:stabilite2} gives:
$$
F'(\underline{\kappa}_k) \, = \, - \, \dfrac{\underline{z}_k \, \alpha_k}{\underline{\kappa}_k} \neq 0 \, .
$$
Moreover, the characteristic polynomial of $\M(z)$ at $\kappa$ equals $z-F(\kappa)$ up to a nonvanishing holomorphic function of $(z,\kappa)$
close to $(\underline{z}_k,\underline{\kappa}_k)$. This means that $\underline{\kappa}_k$ is an algebraically simple eigenvalue of
$\M(\underline{z}_k)$ and can therefore be extended holomorphically with respect to $z$ in a sufficiently small neighborhood of $\underline{z}_k$.
We let $\kappa_k(z)$ denote this holomorphic extension, which satisfies $z=F(\kappa_k(z))$ for any $z$ close to $\underline{z}_k$. Performing a
Taylor expansion, we compute:
$$
\kappa_k \big( \underline{z}_k \, (1+\epsilon) \big) \, =\underline{\kappa}_k \left( \, 1 \, - \, \dfrac{\epsilon}{\alpha_k}\right) \, + \, O(\epsilon^2) \, .
$$
In particular, $\kappa_k \big( \underline{z}_k \, (1+\epsilon) \big)$ belongs to $\D$ for $\epsilon>0$ small enough.

To conclude, we observe that the $p+r-1$ eigenvalues of $\M(\underline{z}_k)$ which differ from $\underline{\kappa}_k$ lie in $\D \cup \U$.
Those eigenvalues remain in $\D \cup \U$ as $\underline{z}_k$ is perturbed into $\underline{z}_k \, (1+\epsilon)$ for a sufficiently small
$\epsilon>0$. Using the previous step of the analysis, we know that $\M \big( \underline{z}_k \, (1+\epsilon) \big)$ has $r$ eigenvalues
in $\D$ and $p$ eigenvalues in $\U$ so the reader will easily get convinced that the only possible situation for the location of the eigenvalues
of $\M(\underline{z}_k)$ is the one stated in Lemma \ref{lem:1}.
\bigskip

\paragraph{Cases II and III.} It remains to deal with the case where $\mathcal{I}_k$ is the singleton $\{ k \}$ and $\alpha_k<0$ (Case II), and
the final case where $\mathcal{I}_k$ has two elements (Case III). The argument for Case II is the same as for Case I except that now the Taylor
expansion of $\kappa_k$ shows that $\kappa_k \big( \underline{z}_k \, (1+\epsilon) \big)$ belongs to $\U$ for $\epsilon>0$ small enough.
The remaining details for that case are easily filled in. For Case III, $\M(\underline{z}_k)$ has two eigenvalues on $\cercle$, which are, in our
usual notation, $\underline{\kappa}_{\nu_{k,1}}$ and $\underline{\kappa}_{\nu_{k,2}}$. The same argument as in Case I or Case II shows that one
of these eigenvalues moves into $\D$ as $z$ is perturbed from $\underline{z}_k$ to $\underline{z}_k \, (1+\epsilon)$, and the other
eigenvalue moves into $\U$. This situation thus mixes Cases I and II. The conclusion follows and the proof of Lemma \ref{lem:1} is now complete.
\end{proof}

\subsection{Estimates for the spatial Green's function}

This section is devoted to the analysis of the solution to the recurrence relation \eqref{resolvent'}, which we recall is an equivalent
formulation of \eqref{resolvent}. More precisely, our aim is to derive pointwise estimates on the spatial Green's function $G_j(z)$.
We will divide the analysis depending on the position of $z$ in the complex plane. Away from the tangency points $\underline{z}_1,
\dots, \underline{z}_K$, we expect to obtain a uniform exponential decay while near the tangency points only some kind of local
boundedness is expected. We will rely on the spectral splitting given by Lemma~\ref{lem:1} to compute pointwise estimates of the
augmented vector $W_j(z)$.

We start with the estimates away from the tangency points.

\begin{lemma}[Bounds away from the tangency points]
\label{lem:2}
Let $\underline{z} \in \Ubar \setminus \{ \underline{z}_1,\dots,\underline{z}_K \}$. Then there exists an open ball $B_\delta (\underline{z})$,
$\delta>0$, centered at $\underline{z}$ and there exist two constants $C>0$, $c>0$ such that:
$$
\forall \, z \in B_\delta (\underline{z}) \, ,\quad \forall \, j \in \Z \, ,\quad \big| \, G_j(z) \, \big| \, \le \, C \, \exp \big( -c \, |j| \, \big) \, .
$$
\end{lemma}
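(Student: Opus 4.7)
The plan is to treat \eqref{resolvant'} as a linear recurrence possessing an exponential dichotomy, furnished by the spectral splitting of Lemma \ref{lem:1}, and to solve it by a Green's-function formula on that dichotomy.

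\textbf{Step 1 (Exponential dichotomy for $\M(z)$).} By the first part of Lemma \ref{lem:1}, $\M(\underline{z})$ has $r$ eigenvalues in $\D$, $p$ in $\U$, and none on $\cercle$. Holomorphy of $z \mapsto \M(z)$ together with continuity of the spectrum produce $\delta>0$ and radii $0 < \rho < 1 < R$ such that
$$\sigma \bigl( \M(z) \bigr) \, \subset \, \{ \, |\kappa| \le \rho \, \} \, \cup \, \{ \, |\kappa| \ge R \, \} \, , \qquad z \in B_\delta (\underline{z}) \, .$$
Shrinking $\delta$ if needed, Assumption \ref{hyp:2} ensures that $\A_p$ remains bounded away from $0$ on $\overline{B_\delta(\underline{z})}$. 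The Riesz spectral projector
$$\Pi^s(z) \, := \, \dfrac{1}{2 \, \pi \, \mathbf{i}} \, \oint_\cercle \bigl( \kappa \, I - \M(z) \bigr)^{-1} \, \md\kappa$$
is holomorphic on $B_\delta(\underline{z})$ and of constant rank $r$, and $\Pi^u(z) := I - \Pi^s(z)$ has rank $p$. Evaluating $\M(z)^n \, \Pi^s(z)$ and $\M(z)^{-n} \, \Pi^u(z)$ by the Dunford calculus on the contours $\{|\kappa| = \rho'\}$ with $\rho < \rho' < 1$ and $\{|\kappa| = R'\}$ with $1 < R' < R$, one gets uniform bounds
$$\| \, \M(z)^n \, \Pi^s(z) \, \| \, \le \, C \, \rho^n \, , \qquad \| \, \M(z)^{-n} \, \Pi^u(z) \, \| \, \le \, C \, R^{-n} \, , \qquad n \in \N \, , \; z \in B_\delta(\underline{z}) \, .$$

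\textbf{Step 2 (Variation-of-constants formula).} The source $f_j := \A_p(z)^{-1} \, (Q_1 \, \boldsymbol{\delta})_j \, {\bf e}$ in \eqref{resolvant'} vanishes outside $\{ -p \le j \le r \}$ and is uniformly bounded in $z \in B_\delta(\underline{z})$. Since $\M(z)$ has no spectrum on $\cercle$, the homogeneous recurrence $W_{j+1} = \M(z) \, W_j$ has no nonzero $\ell^2$ solution, so \eqref{resolvant'} admits a unique $\ell^2$ solution, given explicitly by
$$W_j \, = \, \sum_{k<j} \, \M(z)^{\, j-1-k} \, \Pi^s(z) \, f_k \, - \, \sum_{k \ge j} \, \M(z)^{\, j-1-k} \, \Pi^u(z) \, f_k \, ,$$
where the negative powers of $\M(z)$ in the second sum act on $\mathrm{Ran} \, \Pi^u(z)$, on which $\M(z)$ is invertible. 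Both series are in fact finite because $f$ has finite support, and plugging in the dichotomy bounds of Step 1 yields
$$\| \, W_j \, \| \, \le \, C \, \exp(-c \, |j|) \, , \qquad c \, := \, \min \bigl( \log(1/\rho) \, , \, \log R \bigr) \, > \, 0 \, ,$$
uniformly for $z \in B_\delta(\underline{z})$. Since $G_z(j)$ is the first component of $W_{j-p+1}$, this gives $|G_z(j)| \le C' \, \exp(-c \, |j|)$ after adjusting the constant, which is the claim.

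\textbf{Main obstacle.} The only mild difficulty is maintaining uniformity in $z$ of the dichotomy constants $C,c,\rho,R$. This is handled by the holomorphy (hence continuous operator-norm dependence) of the Riesz projector $z \mapsto \Pi^s(z)$, the openness of the spectral gap of $\M(\underline{z})$, and the observation that the Dunford contours used in Step 1 can be fixed once and for all on a sufficiently small ball $B_\delta(\underline{z})$.
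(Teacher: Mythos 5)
Your proposal is correct and follows essentially the same route as the paper: it invokes the spectral splitting of Lemma \ref{lem:1} to get a $z$-uniform hyperbolic dichotomy for $\M(z)$ via holomorphic Riesz projectors, solves \eqref{resolvant'} by the same stable/unstable variation-of-constants representation (summing from $-\infty$ for the stable part and from $+\infty$ for the unstable part), and exploits the finite support of $Q_1\,\boldsymbol{\delta}$ together with Assumption \ref{hyp:2} (to control $\A_p(z)^{-1}$) to conclude the uniform exponential bound. The only differences are cosmetic (e.g.\ the exponential rate should be stated with $\rho'$ and $R'$ rather than $\rho$ and $R$, and the paper indexes its stable sum up to $\ell=j$ while you stop at $k<j$), none of which affects the argument.
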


\begin{proof}
We first introduce some notation. Let $\underline{z} \in \Ubar \setminus \{ \underline{z}_1,\dots,\underline{z}_K \}$ be fixed. We know that $\M(z)$ in
\eqref{defM} is well-defined and holomorphic in a sufficiently small neighborhood of $\underline{z}$ (including in the case where $\underline{z}$ belongs
to $\cercle$). Moreover, because of the spectral splitting shown in Lemma \ref{lem:1}, the matrix $\M(z)$ has no eigenvalue on $\cercle$ for $z$ close
to $\underline{z}$, and it has $r$, resp. $p$, eigenvalues in $\D$, resp. $\U$, for $z$ close to $\underline{z}$. Consequently, for $z$ close to $\underline{z}$,
the so-called stable subspace, which is spanned by the generalized eigenvectors of $\M(z)$ associated with eigenvalues in $\D$, has constant dimension
$r$. Similarly, the unstable subspace, which is spanned by the generalized eigenvectors of $\M(z)$ associated with eigenvalues in $\U$, has constant
dimension $p$. We let $\E^s(z)$, resp. $\E^u(z)$, denote the stable, resp. unstable, subspace of $\M(z)$ for $z$ close to $\underline{z}$. We have
the decomposition:
$$
\forall \, z \in B_\delta (\underline{z}) \, ,\quad \C^{p+r} \, = \, \E^s(z) \oplus \E^u(z) \, ,
$$
for some sufficiently small radius $\delta>0$. The associated projectors are denoted $\pi^s(z)$ and $\pi^u(z)$. The dynamics of \eqref{resolvent'} is therefore
of hyperbolic type for any $z \in B_\delta (\underline{z})$.

The projectors $\pi^s(z)$ and $\pi^u(z)$ are given by contour integrals. For instance, we have:
$$
\pi^s(z) \, = \, \dfrac{1}{2 \, \mathbf{i} \, \pi} \, \int_\gamma \, (w \, I \, - \, \M(z))^{-1} \, {\rm d}w \, ,
$$
where $\gamma$ is a contour that encloses the stable eigenvalues (those in $\D$) of $\M(z)$ (for instance, $\cercle$ is such a contour). A similar formula
holds for $\pi^u(z)$ with a contour that encloses the unstable eigenvalues. This formula shows that $\pi^s(z)$ depends holomorphically on $z$ in the ball
$B_\delta (\underline{z})$ and consequently, the stable and unstable subspaces $\E^s(z)$ and $\E^u(z)$ depend holomorphically\footnote{Following the
analysis of spectral projectors in \cite{Kato}, we shall say that a vector space $\E(z) \subset \C^N$ that depends on a complex variable $z$ for $z$ in an
open set $\mathcal{O} \subset \C$ and that has constant dimension $n$, depends holomorphically on $z$ if, for any $\underline{z} \in \mathcal{O}$, there
exists a neighborhood $\mathcal{V}$ of $\underline{z}$ in $\mathcal{O}$ and a basis $e_1(z), \dots, e_n(z)$ of $\E(z)$ that depends holomorphically on
$z$ in $\mathcal{V}$. This amounts to saying that the vector bundle defined by $\E$ over $\mathcal{O}$ is holomorphic. A typical example is the case
$\E(z) =P(z) \, \C^N$ where $P(z)$ is a projector on $\C^N$ that depends holomorphically on $z$ in an open set $\mathcal{O}$.} on $z$.

Up to restricting $\delta$, any complex number $z$ in the open ball $B_\delta (\underline{z})$ lies in the resolvent set of the operator $\mathcal{L}$,
hence there exists a unique solution $(W_j(z))_{j \in \Z} \in \ell^2(\Z;\C^{p+r})$ to \eqref{resolvent'}. Since the dynamics of the iteration \eqref{resolvent'} for
such $z$ enjoys a hyperbolic dichotomy, the solution to \eqref{resolvent'} is given by integrating either from $j$ to $+\infty$, or from $-\infty$ to $j-1$,
depending on whether we compute the unstable or stable components of the vector $W_j(z)$. This leads to the expression:
\begin{equation}
\label{composanteinstablej}
\forall \, j \in \Z \, ,\quad \pi^u(z) \, W_j(z) \, = \, - \, \dfrac{1}{\A_p(z)} \,
\sum_{\ell \ge 0} \, \, (Q_1 \, \boldsymbol{\delta})_{j+\ell} \, \, \M(z)^{-1-\ell} \, \pi^u(z) \, {\bf e} \, ,
\end{equation}
for the unstable components, and to the expression:
\begin{equation}
\label{composantestablej}
\forall \, j \in \Z \, ,\quad \pi^s(z) \, W_j(z) \, = \, \dfrac{1}{\A_p(z)} \,
\sum_{\ell =-\infty}^{j-1} \, \, (Q_1 \, \boldsymbol{\delta})_\ell \, \, \M(z)^{j-1-\ell} \, \pi^s(z) \, {\bf e} \, ,
\end{equation}
for the stable components.

At this stage, we observe that the sequence $Q_1 \, \boldsymbol{\delta}$ only has finitely many nonzero coefficients, which are given by:
$$
\forall \, j \in \Z \, ,\quad (Q_1 \, \boldsymbol{\delta})_j \, = \, \begin{cases}
a_{-j,1} \, ,& \text{\rm if $j \in \{ -p,\dots,r \}$,} \\
0 \, ,& \text{\rm otherwise.}
\end{cases}
$$
Hence we see from \eqref{composanteinstablej} that $\pi^u(z) \, W_j(z)$ vanishes for $j \ge r+1$, and we see from \eqref{composantestablej}
that $\pi^s(z) \, W_j(z)$ vanishes for $j \le -p$. For $j \le r$, we get:
$$
\pi^u(z) \, W_j(z) \, = \, - \, \dfrac{1}{\A_p(z)} \, \sum_{\ell =\max(-p-j,0)}^{r-j} \, a_{-j-\ell,1} \, \M(z)^{-1-\ell} \, \pi^u(z) \, {\bf e} \, ,
$$
and since the sequence $(\M(z)^{-\ell} \, \pi^u(z))_{\ell \ge 1}$ is exponentially decreasing, uniformly with respect to $z \in B_\delta (\underline{z})$,
we get the uniform bound\footnote{Here we also use Assumption \ref{hyp:2} to get a uniform local bound for $\A_p(z)^{-1}$, including in the case
$\underline{z} \in \cercle$ for which $z$ can come inside the unit disk.}:
\begin{equation}
\label{borneslem2-1}
\forall \, z \in B_\delta (\underline{z}) \, ,\quad \forall \, j \in \Z \, ,\quad \Big| \, \pi^u (z) \, W_j(z) \, \Big| \, \le \, \begin{cases}
0  \, ,& \text{\rm if $j \, \ge \, r+1$,} \\
C \, \exp (- \, c \, |j|) \, ,& \text{\rm if $j \, \le \, r$.}
\end{cases}
\end{equation}

Similar arguments, using the uniform exponential decay of the sequence $(\M(z)^{\ell} \, \pi^s(z))_{\ell \ge 1}$, yield the bound:
\begin{equation}
\label{borneslem2-2}
\forall \, z \in B_\delta (\underline{z}) \, ,\quad \forall \, j \ge 1 \, ,\quad \Big| \, \pi^s (z) \, W_j(z) \, \Big| \, \le \, \begin{cases}
0 \, ,& \text{\rm if $j \, \le \, -p-1$,} \\
C \, \exp (- \, c \, |j|) \, ,& \text{\rm if $j \, \ge -p$.}
\end{cases}
\end{equation}
Adding \eqref{borneslem2-1} and \eqref{borneslem2-2} gives the claim of Lemma \ref{lem:2} since the spatial Green's function $G_j(z)$ is just
one coordinate of the vector $W_j(z) \in \C^{p+r}$.
\end{proof}

We are now going to examine the behavior of the spatial Green's function $G(z)$ close to any of the points $\underline{z}_k$, $k=1,\dots,K$,
where the spectrum of $\mathcal{L}$ is tangent to the unit circle. Let us first recall that the exterior $\U$ of the unit disk belongs to the resolvent
set of $\mathcal{L}$ hence the spatial Green's function $G(z)$ is well-defined in the ``half-ball'' $B_\delta (\underline{z}_k) \cap \U$ for any radius
$\delta>0$. Our goal below is to extend holomorphically $G(z)$ to a whole neighborhood of $\underline{z}_k$ for each $k$, which amounts to
passing through the (essential) spectrum of $\mathcal{L}$. Our results are the following two lemmas.

\begin{lemma}[Bounds close to the tangency points -- Cases I and II]
\label{lem:3}
Let $k \in \{1,\dots,K\}$ be such that the set $\mathcal{I}_k$ in \eqref{defIk} is the singleton $\{ k \}$. Then there exists an open ball $B_\epsilon
(\underline{z}_k)$ and there exist two constants $C>0$ and $c>0$ such that, for any integer $j \in \Z$, the component $G_j(z)$ defined on
$B_\epsilon(\underline{z}_k) \cap \U$ extends holomorphically to the whole ball $B_\epsilon(\underline{z}_k)$ with respect to $z$,
and the holomorphic extension satisfies the bound:
$$
\forall \, z \in B_\epsilon(\underline{z}_k) \, ,\quad \forall \, j \in \Z \, ,\quad \big| \, G_j(z) \, \big| \, \le \, \begin{cases}
C \, \exp \big( -c \, |j| \, \big) \, ,& \text{\rm if $j \, \le \, 0$,} \\
C \, \Big| \kappa_k(z) \Big|^{j} \, ,& \text{\rm if $j \, \ge \, 1$,}
\end{cases} \quad \text{\rm if $\alpha_k>0 \quad$ (Case I),}
$$
and
$$
\forall \, z \in B_\epsilon(\underline{z}_k) \, ,\quad \forall \, j \in \Z \, ,\quad \big| \, G_j(z) \, \big| \, \le \, \begin{cases}
C \, \Big| \kappa_k(z) \Big|^{|j|} \, ,& \text{\rm if $j \, \le \, 0$,} \\
C \, \exp \big( -c \, j \, \big) \, ,& \text{\rm if $j \, \ge \, 1$,}
\end{cases} \quad \text{\rm if $\alpha_k<0 \quad$ (Case II),}
$$
where, in either case, $\kappa_k(z)$ denotes the (unique) holomorphic eigenvalue of $\M(z)$ defined close to $\underline{z}_k$ and that satisfies
$\kappa_k(\underline{z}_k)=\underline{\kappa}_k$.
\end{lemma}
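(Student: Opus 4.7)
The plan is to mimic the hyperbolic-dichotomy construction of Lemma \ref{lem:2}, but with a refined three-piece splitting that isolates the simple eigenvalue $\kappa_k(z)$ which crosses the unit circle near $\underline{z}_k$. The key observation is that, because the source in \eqref{resolvant'} has finite support $\{-p,\dots,r\}$, the one-sided sums producing the components of $W_j$ are \emph{finite} sums. They are therefore holomorphic in $z$ throughout a full ball $B_\varepsilon(\underline{z}_k)$ (no convergence issue whatsoever), which simultaneously furnishes the holomorphic extension of $G_z$ across the tangency point and the desired geometric estimate in terms of $|\kappa_k(z)|$.

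First I would refine the spectral picture of $\M(z)$. By Lemma \ref{lem:1}, $\underline{\kappa}_k$ is an algebraically simple eigenvalue of $\M(\underline{z}_k)$ on $\cercle$, and the remaining $p+r-1$ eigenvalues sit at positive distance from $\cercle$. Via Dunford's formula, this yields, for $\varepsilon$ small enough, a holomorphic splitting
\begin{equation*}
\C^{p+r} \, = \, \E^{ss}(z) \, \oplus \, \E^c(z) \, \oplus \, \E^{uu}(z) \, , \qquad z \in B_\varepsilon(\underline{z}_k) \, ,
\end{equation*}
where $\E^c(z) := \ker(\M(z)-\kappa_k(z)\, I)$ is one-dimensional, $\E^{ss}(z)$ collects the generalized eigenspaces associated with eigenvalues of modulus $\le 1-\delta_0$, and $\E^{uu}(z)$ those with eigenvalues of modulus $\ge 1+\delta_0$, for some uniform $\delta_0>0$. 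The projectors $\pi^{ss}, \pi^c, \pi^{uu}$ depend holomorphically on $z$ on the entire ball.

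Second, I would write down the three components of the unique $\ell^2$-solution of \eqref{resolvant'} for $z \in B_\varepsilon(\underline{z}_k) \cap \U$. As in Lemma \ref{lem:2}, $\pi^{ss}(z) W_j$ is given by forward summation from $-\infty$ and $\pi^{uu}(z) W_j$ by backward summation from $+\infty$; the finite support of $Q_1\boldsymbol{\delta}$ truncates both sums, and on $\overline{B_{\varepsilon/2}(\underline{z}_k)}$ the spectral radii of $\M|_{\E^{ss}}$ and $\M^{-1}|_{\E^{uu}}$ remain below $1-\delta_0$. This yields a uniform bound $|\pi^{ss}(z)W_j| + |\pi^{uu}(z) W_j| \le C\, e^{-c|j|}$ valid for all $j \in \Z$ and all $z \in \overline{B_{\varepsilon/2}(\underline{z}_k)}$, exactly as in the proof of Lemma \ref{lem:2}.

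For the central component I would pick the integration direction that reproduces the correct $\ell^2$-solution on the $\U$-side: forward integration in Case I (where $\kappa_k(z) \in \D$ for $z \in \U$) and backward integration in Case II (where $\kappa_k(z) \in \U$ for $z \in \U$). Either formula is a \emph{finite} sum of holomorphic functions of $z$ on the whole ball, hence defines the holomorphic extension of $\pi^c(z) W_j$ through $\underline{z}_k$. Using that $\M(z)$ acts as the scalar $\kappa_k(z)$ on $\E^c(z)$, in Case I for $j \ge r+1$ one factors
\begin{equation*}
\pi^c(z) \, W_j \, = \, \kappa_k(z)^{\, j} \, \cdot \, \kappa_k(z)^{-1} \, \sum_{m=-p}^{r} \, \kappa_k(z)^{-m} \, f_m(z) \, \pi^c(z) \, {\bf e} \, ,
\end{equation*}
whose trailing factor is holomorphic and bounded on $\overline{B_{\varepsilon/2}(\underline{z}_k)}$, yielding $|\pi^c(z) W_j| \le C \, |\kappa_k(z)|^j$. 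For $j \le 0$ the sum is either empty or carries only $O(1)$ terms, producing an $O(1)$ bound that is absorbed into $C\, e^{-c|j|}$ over the bounded range $\{-p+1,\dots,0\}$. Case II is treated symmetrically by exchanging the roles of $\pm\infty$. Summing the three projections and picking the appropriate coordinate of $W_j$ gives the announced estimates for $G_z(j)$.

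The main obstacle I anticipate is Step 1: one must make sure that the simple eigenvalue $\kappa_k(z)$ really does extend holomorphically and remains isolated from the other eigenvalues of $\M(z)$ throughout a whole two-sided neighborhood of $\underline{z}_k$. Simplicity at $\underline{z}_k$ comes from $F'(\underline{\kappa}_k) = -\underline{z}_k\, \alpha_k/\underline{\kappa}_k \ne 0$, which was established inside the proof of Lemma \ref{lem:1}; then continuity of the spectrum together with Assumption \ref{hyp:4} (singleton $\mathcal{I}_k$) guarantees that the other $p+r-1$ eigenvalues lie strictly inside $\D \cup \U$ at distance at least $\delta_0$ from $\cercle$ on a sufficiently small neighborhood of $\underline{z}_k$. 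Once this Dunford splitting is in place, everything else reduces to manipulation of finite geometric sums.
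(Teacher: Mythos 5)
Your proposal is correct and follows essentially the same route as the paper: the three-way holomorphic splitting $\E^{ss}(z)\oplus \mathrm{Span}\,E_k(z)\oplus \E^u(z)$ near $\underline{z}_k$ (justified by the simplicity of $\underline{\kappa}_k$ from Lemma \ref{lem:1} and the singleton hypothesis on $\mathcal{I}_k$), one-sided summation formulas that become finite sums thanks to the finite support of $Q_1\,\boldsymbol{\delta}$, hence the holomorphic extension across $\underline{z}_k$, and the bound $C\,|\kappa_k(z)|^j$ for the central component with the exponentially decaying bound for the strongly stable and unstable parts. The only cosmetic difference is your explicit factorization of $\kappa_k(z)^j$ (and you should also note that the finitely many intermediate indices $1\le j\le r$ are absorbed the same way as $j\le 0$), which the paper handles by directly bounding the finite sums and comparing the two estimates.
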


\begin{lemma}[Bounds close to the tangency points -- Case III]
\label{lem:4}
Let now $k \in \{1,\dots,K\}$ be such that the set $\mathcal{I}_k$ in \eqref{defIk} has two elements $\{ \nu_{k,1},\nu_{k,2} \}$ which are fixed by the convention
$\alpha_{\nu_{k,1}}<0<\alpha_{\nu_{k,2}}$. Then there exists an open ball $B_\epsilon (\underline{z}_k)$ centered at $\underline{z}_k$ and there exists a
constant $C>0$ such that, for any integer $j \in \Z$, the component $G_j(z)$ defined on $B_\epsilon(\underline{z}_k) \cap \U$ extends
holomorphically to the whole ball $B_\epsilon(\underline{z}_k)$ with respect to $z$, and the holomorphic extension satisfies the bound:
$$
\forall \, z \in B_\epsilon(\underline{z}_k) \, ,\quad \forall \, j \in \Z \, ,\quad \big| \, G_j(z) \, \big| \, \le \, \begin{cases}
C \, \Big| \kappa_{\nu_{k,1}}(z) \Big|^{|j|} \, , & \text{\rm if $j \, \le \, 0$,} \\
C \, \Big| \kappa_{\nu_{k,2}}(z) \Big|^{j} \, ,& \text{\rm if $j \, \ge \, 1$,}
\end{cases} \qquad \text{\rm (Case III),}
$$
where $\kappa_{\nu_{k,1}}(z)$, resp. $\kappa_{\nu_{k,2}}(z)$, denotes the (unique) holomorphic eigenvalue of $\M(z)$ defined close to $\underline{z}_k$
and that satisfies $\kappa_{\nu_{k,1}}(\underline{z}_k)=\underline{\kappa}_{\nu_{k,1}}$, resp. $\kappa_{\nu_{k,2}}(\underline{z}_k)=\underline{\kappa}_{\nu_{k,2}}$.
\end{lemma}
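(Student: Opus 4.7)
My plan is to mimic the spectral-projector strategy of Lemma \ref{lem:3}, adapting it to the fact that Case III has two simple critical eigenvalues $\underline{\kappa}_{\nu_1}, \underline{\kappa}_{\nu_2}$ on $\cercle$ at $\underline{z}_k$, moving in opposite directions across $\cercle$ as $z$ perturbs off $\underline{z}_k$ --- precisely what Assumption \ref{hyp:4} encodes via $\alpha_{\nu_1} \, \alpha_{\nu_2}<0$. First I construct four holomorphic spectral projectors on a small ball $B_\varepsilon(\underline{z}_k)$ by applying Dunford's formula to four disjoint Jordan contours enclosing respectively $\underline{\kappa}_{\nu_1}$, $\underline{\kappa}_{\nu_2}$, the $r-1$ strongly stable eigenvalues (in a fixed compact subset of $\D$), and the $p-1$ strongly unstable eigenvalues (in a fixed compact subset of $\U$). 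This yields rank-one projectors $P_{\nu_1}(z), P_{\nu_2}(z)$ onto the eigenlines of the holomorphic continuations $\kappa_{\nu_1}(z), \kappa_{\nu_2}(z)$, together with group projectors $P_{\mathrm{ss}}(z)$ of rank $r-1$ and $P_{\mathrm{su}}(z)$ of rank $p-1$, all summing to the identity.

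I then set
\begin{equation*}
\pi^{s,\mathrm{ext}}(z) \, := \, P_{\mathrm{ss}}(z) \, + \, P_{\nu_2}(z) \, , \qquad
\pi^{u,\mathrm{ext}}(z) \, := \, P_{\mathrm{su}}(z) \, + \, P_{\nu_1}(z) \, ,
\end{equation*}
which are holomorphic complementary projectors of ranks $r$ and $p$. Combining the Taylor expansion from the proof of Lemma \ref{lem:1} with the convention $\alpha_{\nu_1}<0<\alpha_{\nu_2}$ gives $\kappa_{\nu_2}(z) \in \D$ and $\kappa_{\nu_1}(z) \in \U$ whenever $z \in B_\varepsilon(\underline{z}_k) \cap \U$, so on that half-ball the extended projectors coincide with the natural stable and unstable projectors of $\M(z)$. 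Borrowing the integrated formulas from the proof of Lemma \ref{lem:2}, I then define for $z \in B_\varepsilon(\underline{z}_k)$ and $j \in \Z$,
\begin{align*}
\pi^{u,\mathrm{ext}}(z) \, W_j(z) \, &:= \, - \dfrac{1}{\A_p(z)} \sum_{\ell \ge 0} (Q_1\,\boldsymbol{\delta})_{j+\ell} \, \M(z)^{-1-\ell} \, \pi^{u,\mathrm{ext}}(z) \, \mathbf{e} \, , \\
\pi^{s,\mathrm{ext}}(z) \, W_j(z) \, &:= \, \dfrac{1}{\A_p(z)} \sum_{\ell \le j} (Q_1\,\boldsymbol{\delta})_\ell \, \M(z)^{j-1-\ell} \, \pi^{s,\mathrm{ext}}(z) \, \mathbf{e} \, ,
\end{align*}
each of which is a finite sum (the support of $Q_1\,\boldsymbol{\delta}$ lies in $\{-p,\dots,r\}$) depending holomorphically on $z$. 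By uniqueness of the $\ell^2$ solution to \eqref{resolvant'}, these reproduce the vector $W_j$ attached to $G_z$ on $B_\varepsilon(\underline{z}_k) \cap \U$, furnishing the claimed holomorphic extension to the whole ball.

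The pointwise bounds then follow from the spectral resolution on each invariant piece. Shrinking $\varepsilon$ further, I may assume that for some fixed $\rho \in (0,1)$: every eigenvalue $\mu(z)$ in $\mathrm{range}\,P_{\mathrm{ss}}(z)$ satisfies $|\mu(z)| \le \rho$; every eigenvalue $\lambda(z)$ in $\mathrm{range}\,P_{\mathrm{su}}(z)$ satisfies $|\lambda(z)| \ge 1/\rho$; and $\rho \le |\kappa_{\nu_j}(z)| \le 1/\rho$ for $j=1,2$. On the rank-one ranges of $P_{\nu_j}(z)$, $\M(z)^{\pm m}$ acts by exact multiplication by $\kappa_{\nu_j}(z)^{\pm m}$. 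Plugging these estimates into the two finite sums, the geometric contributions from the strongly stable/unstable parts are dominated by the $\kappa_{\nu_j}$-factors (by the calibration of $\rho$), which yields $|G_z(j)| \le C |\kappa_{\nu_2}(z)|^j$ for $j \ge 1$ from the stable formula, and $|G_z(j)| \le C |\kappa_{\nu_1}(z)|^{|j|}$ for $j \le 0$ from the unstable one.

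The main technical obstacle is the self-consistency of the construction: that the four Dunford contours can be chosen disjointly and uniformly over $B_\varepsilon(\underline{z}_k)$, that $\pi^{s,\mathrm{ext}} + \pi^{u,\mathrm{ext}} = I$ (i.e.\ the ranks add up to $p+r$, which follows from the spectral count in Lemma \ref{lem:1}), and crucially that on $B_\varepsilon(\underline{z}_k) \cap \U$ the extended splitting coincides with the natural one. This last compatibility breaks exactly when $\alpha_{\nu_1} \, \alpha_{\nu_2}$ fails to be negative, which is precisely the content of Assumption \ref{hyp:4}; without it, one of the critical eigenvalues would remain on the same side of $\cercle$ as the other when $z$ enters $\U$, the extended and natural splittings would not agree, and the above identification of $W_j(z)$ with the natural spatial Green's function would collapse.
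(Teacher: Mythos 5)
Your construction is essentially the paper's own argument. The authors prove Lemma \ref{lem:3} via the splitting of $\C^{p+r}$ into the strongly stable block, the critical eigenline(s) and the strongly unstable block, together with the finite stable-type and unstable-type sums \eqref{composanteinstablej'}--\eqref{composantescalairej}, and they obtain Lemma \ref{lem:4} as the announced ``mixture of cases I and II'': the $\kappa_{\nu_2}$-line (which enters $\D$ for $z\in\U$ since $\alpha_{\nu_2}>0$) is treated by the sum from $-\infty$, the $\kappa_{\nu_1}$-line by the sum from $+\infty$. Absorbing the two critical eigenlines into extended rank-$r$ and rank-$p$ projectors, checking the rank count via Lemma \ref{lem:1}, identifying the resulting formulas with the $\ell^2$ Green's function on $B_\varepsilon(\underline{z}_k)\cap\U$ by uniqueness, and reading the bounds off the finite sums is the same proof in only cosmetically different clothing.

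One step of your conclusion should be corrected. The unstable-type sum does not deliver $|G_z(j)|\le C\,|\kappa_{\nu_1}(z)|^{|j|}$ for $j\le 0$: on the $\kappa_{\nu_1}$-eigenline the powers that appear are $\kappa_{\nu_1}(z)^{-1-\ell}$ with $\ell$ of size $|j|$, so your estimate actually produces $C\,|\kappa_{\nu_1}(z)|^{\,j}=C\,|\kappa_{\nu_1}(z)|^{-|j|}$. For $z\in B_\varepsilon(\underline{z}_k)\cap\U$ the two expressions are compatible because $|\kappa_{\nu_1}(z)|\ge 1$ there, but on the part of the ball lying on the other side of the spectral curve one has $|\kappa_{\nu_1}(z)|<1$, and the extension genuinely grows like $|\kappa_{\nu_1}(z)|^{\,j}$ as $j\to-\infty$: for $j\le -p$ the $\kappa_{\nu_1}$-component of $W_j$ equals $-\,\kappa_{\nu_1}(z)^{\,j-1}\,\widehat{Q}_1\big(\kappa_{\nu_1}(z)\big)\,\pi_{\nu_1}(z)\,{\bf e}\,/\,\A_p(z)$, whose prefactor does not vanish near $\underline{z}_k$. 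Hence no bound with exponent $+|j|$ can hold on the whole ball, and the exponent $|j|$ in the displayed statement must be read as $j$ (equivalently $-|j|$); this is precisely the form the paper uses downstream, e.g. the case-III bound $C\,{\rm e}^{\,j\,\Re(\varpi_{\nu_1}(\tau))}$ for $j\le 0$ in the proof of Lemma \ref{lem:11}. Your computation proves that corrected version, so state it as such rather than asserting that the unstable sum yields the exponent $+|j|$.
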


\noindent The proofs of Lemma \ref{lem:3} and Lemma \ref{lem:4} are mostly identical so we just give the proof of Lemma \ref{lem:3} and indicate the minor
refinements for the proof of Lemma \ref{lem:4}.

\begin{proof}[Proof of Lemma \ref{lem:3}]
Most ingredients of the proof are similar to what we have already done in the proof of Lemma \ref{lem:2}. We assume from now on $\alpha_k>0$,
the case $\alpha_k<0$ being left to the interested reader. We just need to slightly adapt the notation used in the proof of Lemma \ref{lem:2} since
the hyperbolic dichotomy of $\M(z)$ does not hold any longer in a whole neighborhood of $\underline{z}_k$. Since $\underline{\kappa}_k$ is a
simple eigenvalue of $\M(\underline{z}_k)$, we can extend it holomorphically to a simple eigenvalue $\kappa_k(z)$ of $\M(z)$ in a neighborhood
of $\underline{z}_k$. This eigenvalue is associated with the eigenvector:
$$
E_k(z) \, := \, \begin{bmatrix}
\, \kappa_k(z)^{p+r-1} \, \\
\vdots \\
\kappa_k(z) \\
1 \end{bmatrix} \in \C^{p+r} \, ,
$$
which also depends holomorphically on $z$ in a neighborhood of $\underline{z}_k$. The vector $E_k(z)$ contributes to the stable subspace of
$\M(z)$ for $z \in \U$ close to $\underline{z}_k$ but the situation is unclear as $z$ goes inside $\D$ (it actually depends on the position of $z$ with
respect to the spectrum of $\mathcal{L}$). The remaining $p+r-1$ eigenvalues of $\M(z)$ enjoy the now familiar hyperbolic dichotomy, uniformly
with respect to $z$ close to $\underline{z}_k$. We let below $\E^{ss}(z)$, resp. $\E^u(z)$, denote the strongly stable, resp. unstable, subspace of
$\M(z)$ associated with those eigenvalues that remain uniformly inside $\D$, resp. $\U$, as $z$ belongs to a neighborhood of $\underline{z}_k$.
In particular, $\E^{ss}(z)$, resp. $\E^u(z)$, has dimension $r-1$, resp. $p$, thanks to Lemma \ref{lem:1}, and we have the decomposition:
\begin{equation}
\label{decomposition-lem3}
\forall \, z \in B_\epsilon(\underline{z}_k) \, ,\quad \C^{p+r} \, = \, \E^{ss}(z) \, \oplus \, \text{\rm Span } E_k(z) \, \oplus \, \E^u(z) \, ,
\end{equation}
for a sufficiently small radius $\epsilon>0$. We let below $\pi^{ss}(z)$, $\pi_k(z)$ and $\pi^u(z)$ denote the holomorphic projectors associated
with the decomposition \eqref{decomposition-lem3}.
\bigskip

We first consider a point $z \in B_\epsilon(\underline{z}_k) \cap \U$ so that the decomposition \eqref{decomposition-lem3} holds and the Green's
function $G(z) \in \ell^2(\Z;\C)$ is well-defined as the only solution to \eqref{resolvent}. We use the equivalent formulation \eqref{resolvent'} and derive
the following expressions that are entirely similar to those found in the proof of Lemma \ref{lem:2}:
\begin{subequations}
\begin{align}
\forall \, j \in \Z \, ,\quad \pi^u(z) \, W_j(z) \, &= \, - \, \dfrac{1}{\A_p(z)} \,
\sum_{\ell \ge 0} \, \, (Q_1 \, \boldsymbol{\delta})_{j+\ell} \, \, \M(z)^{-1-\ell} \, \pi^u(z) \, {\bf e} \, , \label{composanteinstablej'} \\
\pi^{ss}(z) \, W_j(z) \, &= \, \dfrac{1}{\A_p(z)} \,
\sum_{\ell =-\infty}^{j-1} \, \, (Q_1 \, \boldsymbol{\delta})_\ell \, \, \M(z)^{j-1-\ell} \, \pi^{ss}(z) \, {\bf e} \, , \label{composantestablej'} \\
\pi_k(z) \, W_j(z) \, &= \, \dfrac{1}{\A_p(z)} \,
\sum_{\ell =-\infty}^{j-1} \, \, (Q_1 \, \boldsymbol{\delta})_\ell \, \, \kappa_k(z)^{j-1-\ell} \, \pi_k(z) \, {\bf e} \, . \label{composantescalairej}
\end{align}
\end{subequations}

The strongly stable ($\pi^{ss}(z) W_j(z)$) and unstable ($\pi^u(z) W_j(z)$) components obviously extend holomorphically to the whole neighborhood
$B_\epsilon(\underline{z}_k)$ of $\underline{z}_k$ since the projectors $\pi^{ss}(z)$ and $\pi^u(z)$ depend holomorphically on $z$ on that set
and the sums on the right hand side of \eqref{composanteinstablej'} and \eqref{composantestablej'} are, at most, finite. Furthermore, by using the
same type of bounds as in the proof of Lemma \ref{lem:2}, we obtain:
\begin{equation}
\label{borneslem3-1}
\forall \, z \in B_\epsilon(\underline{z}_k) \, ,\quad \forall \, j \in \Z \, ,\quad \big| \, \pi^u(z) \, W_j(z) \, + \, \pi^{ss}(z) \, W_j(z) \, \big| \, \le \,
C \, \exp (- \, c \, |j|) \, ,
\end{equation}
for some appropriate constants $C>0$ and $c>0$. We now focus on the vector $\pi_k(z) \, W_j(z)$ in \eqref{composantescalairej} which is aligned
with the eigenvector $E_k(z)$. We see from \eqref{composantescalairej} that $\pi_k(z) \, W_j(z)$ vanishes for $j \le -p$. For $j$ in the finite set
$\{ -p+1,\dots,r\}$, we have:
$$
\pi_k(z) \, W_j(z) \, = \, \dfrac{1}{\A_p(z)} \, \sum_{\ell =-p}^{j-1} \, a_{-\ell,1} \, \, \kappa_k(z)^{j-1-\ell} \, \pi_k(z) \, {\bf e} \, ,
$$
and for $j \ge r+1$, we have:
$$
\pi_k(z) \, W_j(z) \, = \, \dfrac{1}{\A_p(z)} \, \sum_{\ell =-p}^r \, a_{-\ell,1} \, \, \kappa_k(z)^{j-1-\ell} \, \pi_k(z) \, {\bf e} \, .
$$
In either case, we see that the component $\pi_k(z) \, W_j(z)$ extends holomorphically to the whole neighborhood $B_\epsilon(\underline{z}_k)$
of $\underline{z}_k$ and we have a bound of the form\footnote{Here we use again that $\A_p(z)$ does not vanish in the ball $B_\epsilon
(\underline{z}_k)$ up to restricting the radius $\epsilon$.}:
\begin{equation}
\label{borneslem3-2}
\forall \, z \in B_\epsilon(\underline{z}_k) \, ,\quad \forall \, j \in \Z \, ,\quad \big| \, \pi_k(z) \, W_j(z) \, \big| \, \le \,
\begin{cases}
\, 0 \, ,& \text{\rm if $j \le -p$,} \\
\, C \, |\kappa_k(z)|^j \, ,& \text{\rm if $j \ge 1-p$.}
\end{cases}
\end{equation}

In order to conclude, we can always assume that the ball $B_\epsilon(\underline{z}_k)$ is so small that the modulus $|\kappa_k(z)|$
belongs to the interval\footnote{The constant $c$ here refers to the same one as in \eqref{borneslem3-1}.} $[\exp (-c),\exp c]$ (it equals $1$ at
$\underline{z}_k$). It then remains to add the bounds in \eqref{borneslem3-1} and \eqref{borneslem3-2} and to compare which is the largest.
This completes the proof of Lemma \ref{lem:3} in the case $\alpha_k>0$. The remaining Case II ($\alpha_k<0$) is handled similarly except
that now the eigenvector $E_k(z)$ contributes to the unstable subspace of $\M(z)$ for $|z|>1$. The minor modifications are left to the reader.
\end{proof}

\begin{proof}[Proof of Lemma \ref{lem:4}]
The proof of Lemma \ref{lem:4} is a mixture between Cases I and II in which now $\M(z)$ has two (holomorphic) eigenvalues whose modulus
equals $1$ at $\underline{z}_k$. One contributes to the stable subspace of $\M(z)$ and the other one contributes to the unstable subspace of
$\M(z)$ for $|z|>1$. The same ingredients as in the proof of Lemma \ref{lem:3} can then be applied with minor modifications.
\end{proof}

Let us remark that all the claims in Lemma \ref{lem:2}, Lemma \ref{lem:3} and Lemma \ref{lem:4} do not distinguish between the explicit and
implicit case since they only rely on Lemma \ref{lem:1}. In the implicit case, for the forthcoming estimates of the temporal Green's function of
Section~\ref{section3}, we will also need to obtain bounds of the spatial Green's function $G(z)$ for large values of $z$. These bounds are
provided by the following result.

\begin{lemma}[Bounds at infinity -- Implicit case]
\label{lem:5}
If $Q_1$ is not the identity, then there exist a radius $R \ge 2$ and two constants $C>0$, $c>0$ such that there holds:
$$
\forall \, z \not \in B_R(0) \, ,\quad \forall \, j \in \Z \, ,\quad \big| \, G_j(z) \, \big| \, \le \, C \, \exp \big( -c \, |j| \, \big) \, .
$$
\end{lemma}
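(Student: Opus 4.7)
The plan is to adapt the argument of Lemma \ref{lem:2} by replacing the ``locally holomorphic'' hyperbolic splitting around a point $\underline{z}$ with a ``uniform at infinity'' hyperbolic splitting of $\M(z)$. The key input comes from the implicit case portion of the proof of Lemma \ref{lem:1}: as $|z|\to\infty$, $\M(z)$ converges (in fact it is holomorphic in $1/z$ near infinity) to the constant matrix whose characteristic polynomial is $\kappa^r \, \widehat{Q}_1(\kappa)/a_{p,1}$. By Assumption \ref{hyp:0}, $\widehat{Q}_1$ has no zero on $\cercle$, so this limiting matrix has exactly $r$ eigenvalues strictly inside $\D$ and $p$ eigenvalues strictly outside $\Dbar$, with a genuine spectral gap.

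Concretely, I would first pick a radius $R \ge 2$ large enough that, for every $z$ with $|z|\ge R$, the matrix $\M(z)$ has $r$ eigenvalues lying in a fixed compact subset of $\D$ and $p$ eigenvalues lying in a fixed compact subset of $\U$ (with a common spectral gap $\rho\in(0,1)$). Dunford's formula then produces holomorphic projectors $\pi^s(z)$ and $\pi^u(z)$ on $\{|z|>R\}$ with $\C^{p+r}=\E^s(z)\oplus\E^u(z)$, and one gets the uniform exponential bounds
\[
\forall\,\ell\ge 0,\quad \Nd\M(z)^{\ell}\,\pi^s(z)\Nd \,+\, \Nd\M(z)^{-\ell}\,\pi^u(z)\Nd \,\le\, C\,\rho^{\ell},
\]
for constants $C,\rho$ independent of $z$ in that range. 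In particular any $z$ with $|z|\ge R$ lies in the resolvent set of $\mathcal{L}$ (which is already known from the $\ell^2$-contractivity), so $G_z$ is well-defined and $W_j$ solves the recurrence \eqref{resolvant'}.

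Next I would copy the argument of Lemma \ref{lem:2} line by line: integrating \eqref{resolvant'} from $+\infty$ for the unstable component and from $-\infty$ for the stable one gives the same formulas \eqref{composanteinstablej}--\eqref{composantestablej}, and since $Q_1\,\boldsymbol{\delta}$ is supported in $\{-p,\dots,r\}$, the two sums have at most $p+r+1$ terms. The uniform bound on $\M(z)^{\pm\ell}\pi^{s/u}(z)$ then yields $|\pi^{s/u}(z)\,W_j| \le C\,\exp(-c|j|)$ for some $c>0$ independent of $z$. The prefactor $1/\A_p(z)$ is harmless, and in fact helpful: since $a_{p,1}\ne 0$ by Assumption \ref{hyp:3}, we have $|\A_p(z)|\ge |a_{p,1}|\,|z|/2$ for $|z|$ large, so $|\A_p(z)^{-1}|\le C/R$ uniformly. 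Summing the stable and unstable contributions and reading off the appropriate coordinate of $W_j$ delivers the announced estimate.

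There is no real obstacle here; the only thing to be careful about is establishing the uniform spectral splitting of $\M(z)$ on a neighborhood of $z=\infty$, which follows from continuity of the eigenvalues in $1/z$ together with the non-vanishing of $\widehat{Q}_1$ on $\cercle$. Everything else is a verbatim rerun of Lemma \ref{lem:2}.
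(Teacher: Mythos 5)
Your proposal is correct and follows essentially the same route as the paper: use the fact that in the implicit case $\M(z)$ has a holomorphic limit at infinity whose hyperbolic dichotomy is guaranteed by Assumption \ref{hyp:0} (non-vanishing of $\widehat{Q}_1$ on $\cercle$ plus the index condition giving $r$ roots in $\D$ and $p$ in $\U$), then rerun the proof of Lemma \ref{lem:2} with uniform exponential bounds on $(\M(z)^{\ell}\pi^s(z))_{\ell\ge 1}$ and $(\M(z)^{-\ell}\pi^u(z))_{\ell\ge 1}$ for $z$ in a neighborhood of infinity. Your added details (uniform spectral gap via continuity in $1/z$, the lower bound on $|\A_p(z)|$ for large $|z|$) just flesh out what the paper leaves as a sketch.
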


\begin{proof}
The proof is basically the same as that of Lemma \ref{lem:2}. Indeed, we recall that in the implicit case, the matrix $\M(z)$ has a limit at infinity,
given by:
$$
\begin{bmatrix}
-\dfrac{a_{p-1,1}}{a_{p,1}} & \dots & \dots & -\dfrac{a_{-r,1}}{a_{p,1}} \\
1 & 0 & \dots & 0 \\
0 & \ddots & \ddots & \vdots \\
0 & 0 & 1 & 0 \end{bmatrix} \, ,
$$
and this matrix has a hyperbolic dichotomy because of Assumption \ref{hyp:0}. We can thus apply the same argument as in the proof of Lemma
\ref{lem:2} for $z$ in a neighborhood of infinity. In particular, we can use the fact that the sequences $(\M(z)^{-\ell} \, \pi^u(z))_{\ell \ge 1}$ and
$(\M(z)^\ell \, \pi^s(z))_{\ell \ge 1}$ are exponentially decreasing, uniformly with respect to $z$ in a neighborhood $B_R(0)^c$ of infinity. The
conclusion of Lemma \ref{lem:5} follows.
\end{proof}

Let us observe that we can extend holomorphically each scalar component $G_j(z)$ to a neighborhood $B_\epsilon(\underline{z}_k)$ of
$\underline{z}_k$, but that does not necessarily mean that the extended sequence $G(z)$ lies in $\ell^2(\Z;\C)$. For instance, in Case I of
Lemma \ref{lem:3}, the eigenvalue $\kappa_k(z)$ contributes to the stable subspace of $\M(z)$ for $|z|>1$ but it starts contributing to the
unstable subspace of $\M(z)$ as $z$ crosses the spectrum of $\mathcal{L}$ (which coincides with the curve $F(\cercle)$). Hence the
holomorphic extension $G(z)$ ceases to be in $\ell^2(\Z;\C)$ as $z$ crosses the spectrum of $\mathcal{L}$ for it then has an exponentially growing
mode in $j$ at $+\infty$.

We finally end this section with the following corollary, which is a direct consequence of Lemma \ref{lem:2},  Lemma \ref{lem:3}, Lemma
\ref{lem:4} and Lemma \ref{lem:5} above by applying a standard compactness argument. We refer to Figure~\ref{fig:recouv} for a geometrical 
representation of this result.

\begin{corollary}
\label{cor1}
There exists some $\epsilon_\star>0$ such that for each $\epsilon \in(0,\epsilon_\star)$ there exists some $\delta_\epsilon>0$ such that the 
Green's function $G(z)$, defined initially as the unique solution $G(z)\in\ell^2(\Z;\C)$ to \eqref{resolvent} for each $z$ in the resolvent set of 
$\mathcal{L}$, has a unique holomorphic extension (also denoted $G(z)$) to the set
\bqs
\mathcal{S} \, := \, \Big\{ \zeta \in \C \, | \, {\rm e}^{- \, \delta_\epsilon} \, < \, |\zeta| \, \le \, {\rm e}^\pi \, \Big\} \cup \bigcup_{k=1}^K B_\epsilon(\underline{z}_k).
\eqs
Moreover there are constants $C>0$ and $c>0$ such that
\begin{itemize}
\item Whenever $z\in \mathcal{S} \, \setminus \, \bigcup_{k=1}^K B_\epsilon(\underline{z}_k)$,
\bqq
\forall \, j \in \Z \, ,\quad \big| \, G_j(z) \, \big| \, \le \, C \, \exp \big( -c \, |j| \, \big) \, .
\label{uniformexpbound}
\eqq
\item Whenever $z\in B_\epsilon(\underline{z}_k)$ for $k=1,\cdots,K$, the Green's function $G(z)$ satisfies one of the bounds in Lemma
\ref{lem:3} or \ref{lem:4} depending on the cardinal of $\mathcal{I}_k$ and the sign of $\alpha_k$.
\end{itemize}
Furthermore, in the implicit case, the above uniform exponential bound \eqref{uniformexpbound} extends to all $|z| \geq {\rm e}^\pi$.
\end{corollary}

\begin{figure}[t!]
  \centering
\includegraphics[width=.4\textwidth]{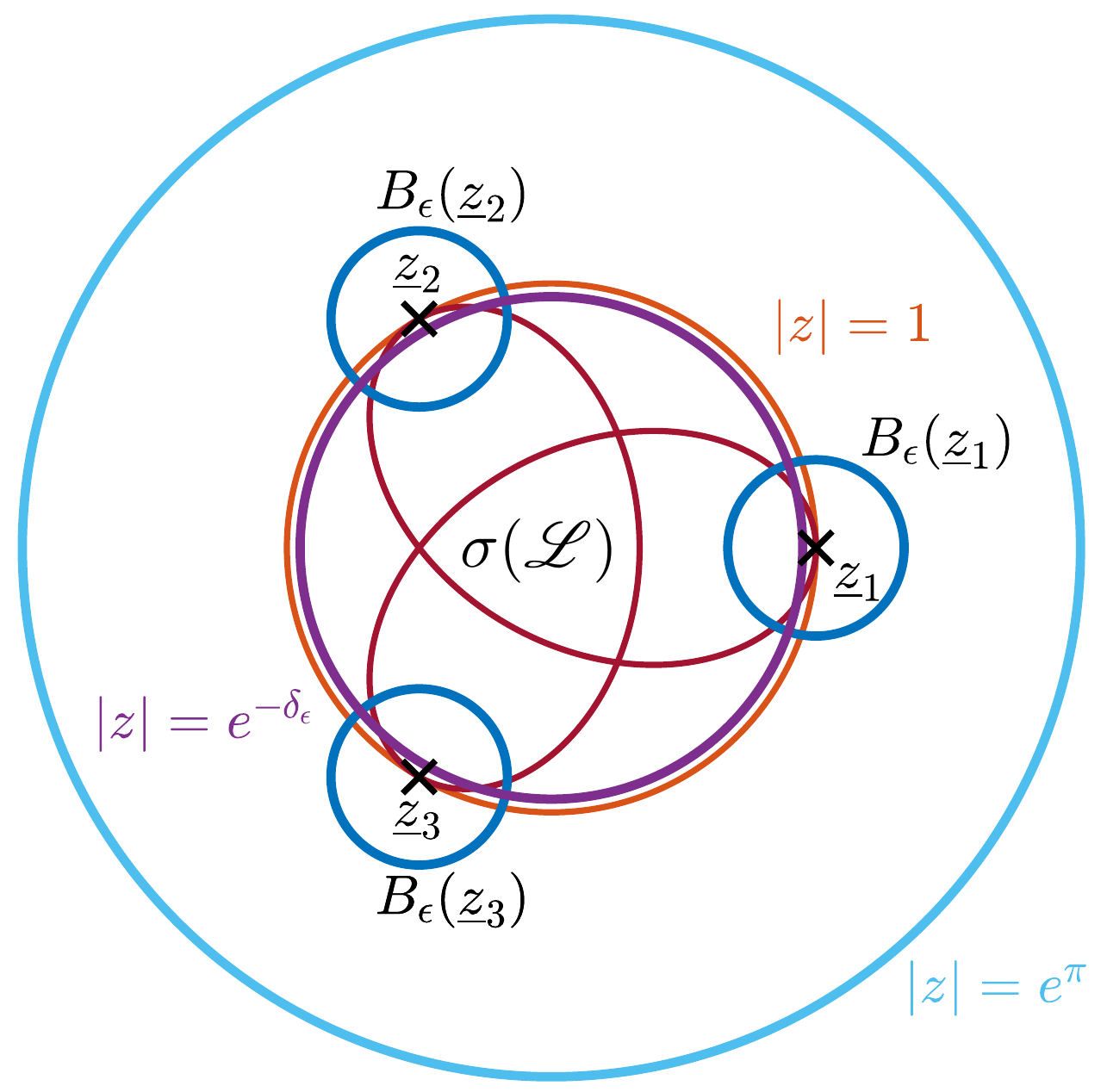}\hspace{2cm}
\includegraphics[width=.35\textwidth]{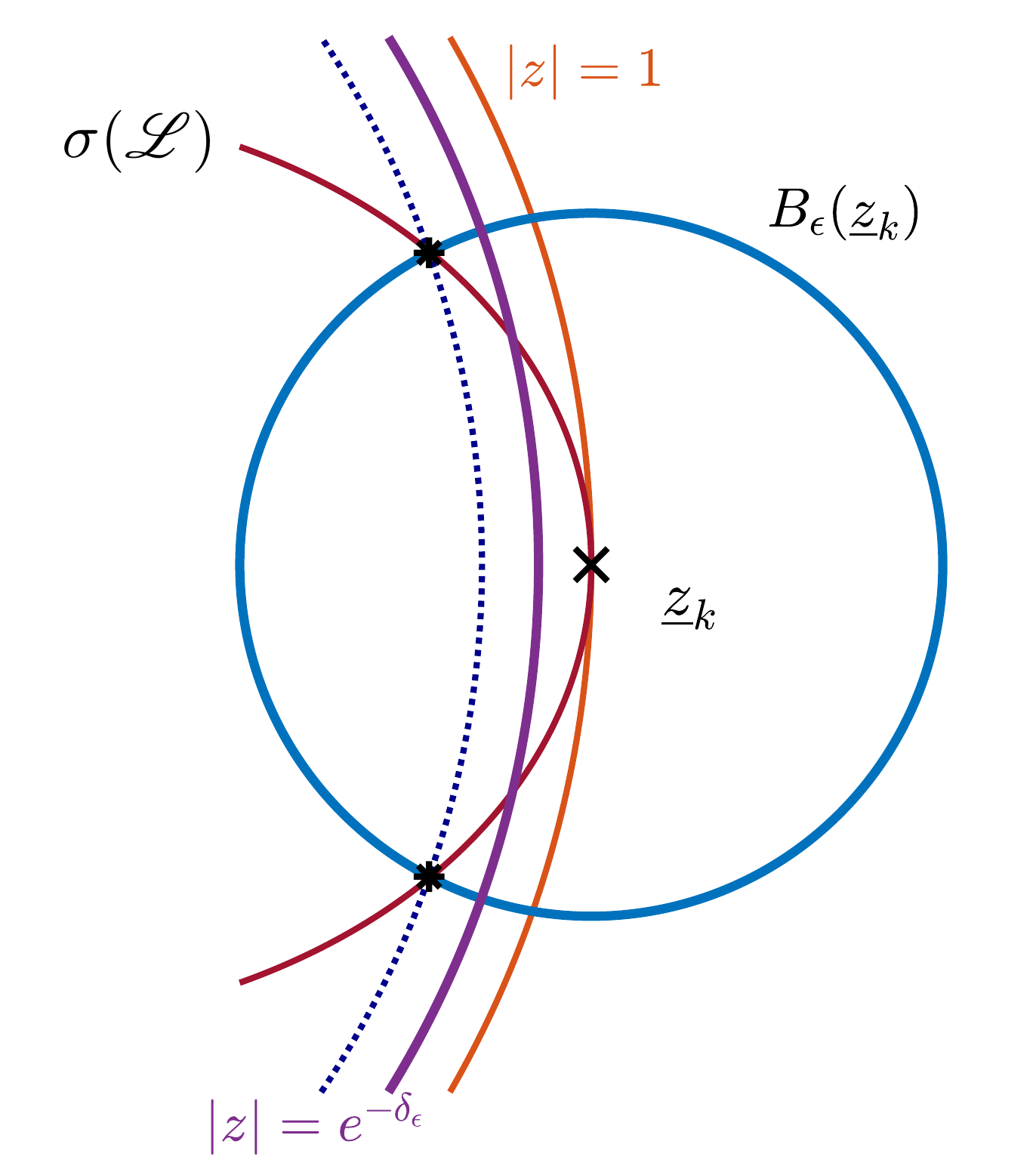}
  \caption{Geometrical illustrations of the set $\mathcal{S}$ given in Corollary~\ref{cor1} in the case $K=3$. The curve of essential spectrum
  $\sigma(\mathscr{L})$ (dark red curve) is tangent to the unit circle precisely at $\underline{z}_k$ for $k=1,2,3$ (black crosses) and otherwise
  strictly contained in the unit disk $\D$. For $\epsilon \in(0,\epsilon_\star)$, each ball $B_\epsilon(\underline{z}_k)$ intersects $\sigma(\mathscr{L})$
  at two points (black stars in the right panel) and the dashed dark blue line represents the circle passing through the point with largest modulus.
  We then fix $\delta_\epsilon>0$ such that the circle $\{ z \, | \, |z|={\rm e}^{-\delta_\epsilon} \}$ (magenta curve) is contained in between the unit
  circle and the circle passing through the point with largest modulus.}
  \label{fig:recouv}
\end{figure}

\section{The temporal Green's function}
\label{section3}

The starting point of the analysis is to use the inverse Laplace transform formula to  express the so-called Green's function $\G^{n} := \mathcal{L}^n
\, \boldsymbol{\delta}$ as the following contour integral
\begin{equation}
\forall \, n \in \N^* \, ,\quad \forall \, j \in \Z \, ,\quad \G^{n}_j \, = \, \left(\mathcal{L}^n \, \boldsymbol{\delta}\right)_j \, = \,
\dfrac{1}{2 \, \mathbf{i} \, \pi} \, \int_{\widetilde\Gamma} z^n \, G_j(z) \, \md z \, ,
\label{lapgreenz}
\end{equation}
where $\widetilde{\Gamma}$ is a closed curve in the complex plane surrounding the unit disk $\D$ and lying in the resolvent set of $\mathcal{L}$. The
idea will be to deform $\widetilde{\Gamma}$ in order to obtain sharp pointwise estimates on the temporal Green's function using our pointwise estimates
on the spatial Green's function given in Lemma~\ref{lem:2}, Lemma~\ref{lem:3}, Lemma~\ref{lem:4} and Lemma~\ref{lem:5} above. To do so, we first
change variable in \eqref{lapgreenz}, by setting $z=\exp(\tau)$, such that we get
\bqq
\G^{n}_j \, = \, \dfrac{1}{2 \, \mathbf{i} \, \pi} \, \int_{\Gamma} {\rm e}^{n \, \tau} \, \mathbf{G}_j(\tau) \, \md \tau \, ,
\label{lapgreentau}
\eqq
where without loss of generality $\Gamma=\left\{ s+\mathbf{i} \, \ell ~|~ \ell \in[-\pi,\pi]\right\}$ for some $s>0$ (and actually \emph{any} $s>0$ thanks to
Cauchy's formula), and $\mathbf{G}_j(\tau)$ is given by
\bqs
\forall \, j \in \Z \, ,\quad \mathbf{G}_j(\tau) \, := \, G_{j}({\rm e}^{\tau}) \, {\rm e}^{\tau} \, .
\eqs
The remaining of this section is devoted to the proof of Theorem~\ref{thm:1}. For the sake of clarity, we first treat the explicit case with $K=1$, and then
deal with the implicit case still with $K=1$. And finally, we explain how the results generalize to $K>1$ in both cases. (Let us recall that $K$ denotes the
number of tangency points of the spectrum of $\mathcal{L}$ within the unit circle $\cercle$).

In the explicit case (when $Q_1$ is the identity), the analysis below uses the fact that each velocity $\alpha_k$ lies in the open interval $(-p,r)$. This fact is
stated in the following lemma, which can be seen as a variation on the so-called Courant-Friedrichs-Lewy condition \cite{cfl} and/or the Bernstein inequality
for trigonometric polynomials. A proof of Lemma \ref{lem:Bernstein} is provided in Appendix \ref{appendix}.

\begin{lemma}
\label{lem:Bernstein}
Under Assumptions \ref{hyp:0} and \ref{hyp:1}, if $Q_1$ is the identity, then there holds:
$$
\forall \, k \, = \, 1,\dots,K \, ,\quad - \, p \, < \, \alpha_k \, < \, r \, ,
$$
where the $\alpha_k$'s are the drift velocities arising in \eqref{hyp:stabilite2}.
\end{lemma}

\noindent We now deal with the proof of Theorem \ref{thm:1}.

\subsection{The explicit case with $K=1$}

We first remark that, since $\mathcal{L}=Q_0$ is a convolution operator with finite stencil, for each $n\geq 1$, there holds
\bqs
\G^n_j \, = \, 0 \, , \, \text{ for } \, j \, > \, r \, n \, \text{ or } \, j \, < \, - \, p \, n \, .
\eqs
As a consequence, throughout this section, we assume that $j$ and $n$ satisfy
\bqs
n \ge 1 \, ,\quad - \, p \, n \, \leq \, j \, \leq \, r \, n \, .
\eqs
We also assume without loss of generality that $\underline{\kappa}_1=\underline{z}_1=1$ together with $\alpha_1>0$ (the case $\alpha_1<0$ being
handled similarly). In that case, we have from \eqref{hyp:stabilite2} that
\bqq
F\Big( {\rm e}^{\, \mathbf{i} \, \xi} \Big) \, = \,
\exp \left( \, - \, \mathbf{i} \, \alpha \, \xi \, - \, \beta \, \xi^{\, 2 \, \mu} \, + \, O \Big( \xi^{\, 2 \, \mu+1} \Big) \right) \, \text{ as } \xi \rightarrow 0,
\label{hyp:stab2mod}
\eqq
where we dropped the index $1$ to simplify our notations. Now, using Lemma~\ref{lem:3}, bounds close to the tangency point $z=1$ for $G_j(z)$ translate
into bounds near the origin $\tau=0$ for $\mathbf{G}_j(\tau)$. More precisely, we have the following lemma which combines Lemma~\ref{lem:3} and
Corollary~\ref{cor1}.

\begin{lemma}
\label{lem:3mod}
There exist $\epsilon_*>0$ and two constants $0<\beta_*<\Re(\beta)<\beta^*$ such that for each $\epsilon\in(0,\epsilon_*)$ there exist some width
$\eta_\epsilon>0$ together with two constants, still denoted $C>0$, $c>0$, such that, for any integer $j\in\Z$, the component $\mathbf{G}_j(\tau)$
extends holomorphically on $B_\epsilon(0)$ with bounds:
\bqq
\forall \, \tau \in B_\epsilon(0) \, ,\quad \forall \, j \in \Z \, ,\quad \big| \, \mathbf{G}_j(\tau) \, \big| \, \le \, \begin{cases}
C \, \exp \big( - \, c \, |j| \, \big) \, ,& \text{\rm if $j \, \le \, 0$,} \\
C \, \exp \big( \, j \, \Re(\varpi(\tau)) \big) \, ,& \text{\rm if $j \, \ge \, 1$,}
\end{cases}
\label{boundnear0}
\eqq
where $\varpi$ is holomorphic on $B_\epsilon(0)$ and has the Taylor expansion:
\bqq
\varpi(\tau) \, = \, - \, \dfrac{1}{\alpha} \, \tau \, + \, (-1)^{\mu+1} \, \dfrac{\beta}{\alpha^{2\, \mu+1}} \, \tau^{2 \, \mu} \,
+ \, O \left( |\tau|^{2 \, \mu+1} \right)\,, \quad \, \forall \, \tau \in B_\epsilon(0) \, ,
\label{eqtaylorexp}
\eqq
together with
\bqq
\Re(\varpi(\tau)) \, \leq \, - \, \dfrac{1}{\alpha} \, \Re(\tau) \, + \, \dfrac{\beta^*}{\alpha^{2\, \mu+1}} \, \Re(\tau)^{2 \, \mu} \, -  \,
\dfrac{\beta_*}{\alpha^{2\, \mu+1}} \, \Im(\tau)^{2 \, \mu} \, , \quad \, \forall \, \tau \in B_\epsilon(0) \, .
\label{eqestimatebeta}
\eqq
Furthermore, we have
\bqq
\forall \, \tau \in \Omega_\epsilon \, := \, \left\{ \, - \, \eta_\epsilon \, < \, \Re(\tau) \, \leq \, \pi \, \right\} \backslash \, B_\epsilon(0) \, ,\quad
\forall \, j \in \Z \, ,\quad \big| \, \mathbf{G}_j(\tau) \, \big| \, \le \, C \, \exp \big( -c \, |j| \, \big) \, .
\label{boundaway0}
\eqq
\end{lemma}

\begin{proof}
The first part of the proof simply relies on writing $\kappa(z)=\exp(\omega(z))$ near $z=1$ and using $z=\exp(\tau)$, such that after identification
we have $\varpi(\tau) = \omega(\exp(\tau))$. Indeed, the function $\kappa$ is holomorphic in the ball $B_{\epsilon_0}(1)$ for some $\epsilon_0>0$.
Upon reducing the size of $\epsilon_0$, we can define a holomorphic function $\omega: B_{\epsilon_0}(1) \to \C$ such that $\kappa(z)=\exp(\omega(z))$
for each $z\in B_{\epsilon_0}(1)$, and $\omega(1)=0$. We now let $\epsilon_*>0$ small enough be such that for each $\epsilon\in(0,\epsilon_*)$ and
$\tau \in B_\epsilon(0)$, we have $\exp(\tau) \in B_{\epsilon_0}(1)$. We can now define $\varpi:B_\epsilon(0)\to\C$ as $\varpi(\tau) := \omega(\exp(\tau))$
which is holomorphic in $B_\epsilon(0)$ by construction. Finally, we remark that $\mathbf{G}_j(\tau)$ extends holomorphically on $B_\epsilon(0)$ for
any $j \in \Z$ since $G_j(z)$ extends holomorphically on $B_{\epsilon_0}(1)$.

Next, we explain how to use the expansion \eqref{hyp:stab2mod} to obtain the desired Taylor expansion \eqref{eqtaylorexp} for $\varpi(\tau)$ near $\tau=0$.
We first remark that for each $\epsilon\in(0,\epsilon_*)$ and $\tau\in B_\epsilon(0)$ we have the identity
\bqs
{\rm e}^\tau=F(\kappa({\rm e}^\tau))=F({\rm e}^{\varpi(\tau)}).
\eqs
As a consequence, we get the expansion
\bqs
\tau = -\alpha \varpi(\tau)+\beta(-1)^{\mu+1}\varpi(\tau)^{2 \, \mu}+O\left( |\varpi(\tau)|^{2 \, \mu+1}\right),
\eqs
as $\tau \to 0$. Since $\varpi$ is holomorphic in $B_\epsilon(0)$ with $\varpi(0)=0$, we can use the above equality to obtain, by identification,  each term
of its Taylor expansion and recover \eqref{eqtaylorexp}. Note that we can always reduce the size of $\epsilon_*$ such that the expansion is valid for each
$\epsilon\in(0,\epsilon_*)$ and $\tau$ in $B_\epsilon(0)$.

To complete the proof we now prove the existence of two positive real numbers $\beta_*$ and $\beta^*$ verifying $0<\beta_*<\Re(\beta)<\beta^*$ such that
inequality \eqref{eqestimatebeta} holds true in $B_\epsilon(0)$ for each $\epsilon\in(0,\epsilon_*)$. First we compute
\begin{align*}
(-1)^{\mu+1} \, \Re \left( \beta \tau^{2\, \mu}\right) &= (-1)^{\mu+1} \, \Re(\beta) \, \Re\left(\tau^{2\, \mu}\right) \, - \,
(-1)^{\mu+1} \, \Im(\beta) \, \Im\left(\tau^{2 \, \mu}\right)\\
&= - \, \Re(\beta) \, \Im(\tau)^{2 \, \mu} \, - \, (-1)^\mu \, \Re(\beta) \, \Re(\tau)^{2 \, \mu} \\
&~~~ - \Re(\beta) \, \sum_{m=1}^{\mu-1} \, (-1)^m \, \left(
\begin{matrix} 2 \, \mu \\ 2 \, m \end{matrix} \right) \, \Re(\tau)^{2 \, m} \, \Im(\tau)^{2 \, (\mu-m)}\\
&~~~ -\Im(\beta) \, \sum_{m=0}^{\mu-1} \, (-1)^{m+1} \, \left(
\begin{matrix} 2 \, \mu \\ 2 \, m +1 \end{matrix} \right) \, \Re(\tau)^{2 \, m+1} \, \Im(\tau)^{2 \, (\mu-m)-1}.
\end{align*}
Next using Young's inequality with some $\delta>0$, we get
\bqs
\Re(\tau)^{k} \, \Im(\tau)^{2 \,\mu-k} \leq \frac{k}{2 \, \mu \, \delta^{\frac{2 \, \mu}{k}}} \, \Re(\tau)^{2 \, \mu}
\, + \, \frac{2\, \mu \, - \, k}{2\, \mu} \, \delta^{\frac{2\, \mu}{2\, \mu - k}} \, \Im(\tau)^{2 \, \mu}, \quad k\, =\, 1, \cdots, 2\, \mu-1\, .
\eqs
And, we also note that the remainder term can be bounded as
\bqs
O\left(|\tau|^{2 \, \mu+1}\right) \leq C \, \epsilon_* \, \left( |\Re(\tau)|^{2 \, \mu}+|\Im(\tau)|^{2 \, \mu}\right), \quad \tau \in B_\epsilon(0) \, ,
\eqs
for each $\epsilon\in(0,\epsilon_*)$ for some constant $C>0$ independent of $\epsilon$. We finally remark that $\delta>0$ can be taken arbitrarily small and
that the leading order term in $\Im(\tau)^{2 \, \mu}$ comes with a negative sign since we assume $\Re(\beta)>0$. As a consequence, upon eventually reducing
the size of $\epsilon_*$, we can find $0<\beta_*<\Re(\beta)<\beta^*$ (depending only on $\epsilon_*$) such that inequality \eqref{eqestimatebeta} holds true in
$B_\epsilon(0)$ for each $\epsilon\in(0,\epsilon_*)$.
\end{proof}

Using Lemma~\ref{lem:3mod}, we readily see that when $-np \leq j\leq 0$, our estimates \eqref{boundnear0}-\eqref{boundaway0} from Lemma~\ref{lem:3mod}
can be combined to
\bqs
\forall \, \tau \in \Omega_\epsilon \cup B_\epsilon(0) \, ,\quad \forall \, j \leq 0 \, ,\quad \big| \, \mathbf{G}_j(\tau) \, \big| \, \le \, C \, {\rm e}^{- \, c \, |j|} \, ,
\eqs
from which we automatically obtain the following estimate, using the contour $\Gamma=\left\{ -\eta +\mathbf{i} \, \ell ~|~ \ell \in[-\pi,\pi]\right\} \subset \Omega_\epsilon
\cup B_\epsilon(0)$ in \eqref{lapgreentau} for any $0<\eta<\eta_\epsilon$. Modifying the contour in \eqref{lapgreentau} is legitimate thanks to Cauchy's formula
and also because the integrals on the segments $\left\{ -\upsilon \pm \mathbf{i} \, \pi ~|~ \upsilon \in[-\eta,\pi] \right\}$ compensate one another.

\begin{lemma}
\label{lem:7} For each $\epsilon \in (0,\epsilon_*)$ there exists constants $C>0$ and $c>0$ such that for all $-n \, p \leq j \leq 0$ with $n\geq 1$, there holds
\bqs
\big| \, \mathcal{G}^n_j \, \big| \, \le \, C \, {\rm e}^{- \, n \, \eta \, - \, c \, |j|} \, ,
\eqs
for any $\eta\in(0,\eta_\epsilon)$ with $\eta_\epsilon>0$ the width given in Lemma~\ref{lem:3mod}.
\end{lemma}

From now on, we assume that $1 \leq j \leq nr$. It turns out that we will need again to divide the analysis in two pieces. We will consider first the medium
range where $1\leq  j\leq n \delta $ where $\delta:=\frac{\alpha}{2}$. In that case we can prove the following lemma.

\begin{lemma}
\label{lem:8}
For each $\epsilon \in \left(0,\min\left( \epsilon_*, \left(\frac{\alpha^{2 \, \mu}}{2\beta^*}\right)^{\frac{1}{2 \, \mu-1}}\right)\right)$ there exists  a constant $C>0$ such
that for $n \geq 1$ and $1 \leq  j \leq n \, \delta$, the following estimate holds:
\bqs
\big| \, \mathcal{G}^n_j \, \big| \, \le \, C \, {\rm e}^{- \, n \, \frac{\eta}{4}} \, ,
\eqs
for each $\eta \in\left(0, \eta_\epsilon \right)$ with $\eta_\epsilon>0$ the width given in Lemma~\ref{lem:3mod}.
\end{lemma}

\begin{proof} For each $\epsilon \in \left(0,\min\left( \epsilon_*, \left(\frac{\alpha^{2 \, \mu}}{2\beta^*}\right)^{\frac{1}{2 \, \mu-1}}\right)\right) $ and for $\eta \in
(0,\eta_\epsilon)$ with $\eta_\epsilon>0$ given in Lemma~\ref{lem:3mod}, we use again the segment $\Gamma=\left\{ -\eta +\mathbf{i} \, \ell ~|~ \ell \in
[-\pi,\pi]\right\} \subset \Omega_\epsilon \cup B_\epsilon(0)$ in \eqref{lapgreentau}. We denote by $\Gamma^{in}$ and $\Gamma^{out}$ the portions of the
segment $\Re(\tau)=-\eta$ which lie either inside $B_\epsilon(0)$ or outside $B_\epsilon(0)$ with $|\Im(\tau)|\leq \pi$. Standard computations (using Lemma
\ref{lem:3mod}) lead to
\bqs
\left| \dfrac{1}{2 \, \mathbf{i} \, \pi} \, \int_{\Gamma^{out}} {\rm e}^{n \, \tau} \, \mathbf{G}_j(\tau) \, \md \tau \right| \, \leq \,
C \, {\rm e}^{- \, n \, \eta \, - \, c \, j} \, ,
\eqs
and
\bqs
\left| \dfrac{1}{2 \, \mathbf{i} \, \pi} \, \int_{\Gamma^{in}} {\rm e}^{n \, \tau} \, \mathbf{G}_j(\tau) \, \md \tau \right| \, \leq \,
C \, {\rm e}^{- \, n \, \eta} \, \int_{\Gamma^{in}} {\rm e}^{\, j \, \Re(\varpi(\tau))} \, \dfrac{|\md \tau|}{2 \, \pi} \, .
\eqs
Next, we recall the estimate \eqref{eqestimatebeta}  on $\Re(\varpi(\tau))$ from Lemma~\ref{lem:3mod}, that is
\bqs
\Re(\varpi(\tau)) \, \leq \, - \, \dfrac{1}{\alpha} \, \Re(\tau) \, +  \, \dfrac{\beta^*}{\alpha^{2\, \mu+1}} \, \Re(\tau)^{2 \, \mu}\, -  \,
\dfrac{\beta_*}{\alpha^{2\, \mu+1}} \, \Im(\tau)^{2 \, \mu} \, , \quad \, \forall \, \tau \in B_\epsilon(0) \, .
\eqs
As a consequence, for all $\tau\in \Gamma^{in}\subset B_\epsilon(0)$, we have
\bqs
\Re(\varpi(\tau)) \, \leq \dfrac{\eta}{\alpha} \, + \, \dfrac{\beta^*}{\alpha^{2\, \mu+1}} \, \eta^{2 \, \mu} \, .
\eqs
Here, we crucially used the fact that the term in $\Im(\tau)^{2 \, \mu}$ in the estimate for $\Re(\varpi(\tau))$ comes with a negative sign. Summarizing, we have
obtained that
\bqs
- \, n \, \eta \, + \, j \, \Re(\varpi(\tau)) \, \leq \, n \, \eta \, \left( - \, \dfrac{1}{2} \, + \, \dfrac{\beta^*}{2\alpha^{2\, \mu}} \, \eta^{2 \, \mu-1} \right) \,
\eqs
for each $\tau\in \Gamma^{in}$ and  $1\leq j \leq \frac{n \alpha}{2}$. Finally, since $0<\eta<\epsilon< \left(\frac{\alpha^{2 \, \mu}}{2\beta^*}\right)^{\frac{1}{2 \, \mu-1}}$,
there holds that $\dfrac{\beta^*}{2\alpha^{2\, \mu}} \, \eta^{2 \, \mu-1} < \frac{1}{4}$, and we get the final estimate
\bqs
{\rm e}^{- \, n \, \eta} \, \int_{\Gamma^{in}} {\rm e}^{\, j \, \Re(\varpi(\tau))} \, \dfrac{|\md \tau|}{2 \, \pi} \, \leq \, {\rm e}^{- \, n \, \frac{\eta}{4}} \, .
\eqs
This concludes the proof of the lemma.
\end{proof}

\begin{figure}[t!]
  \centering
  \includegraphics[width=.45\textwidth]{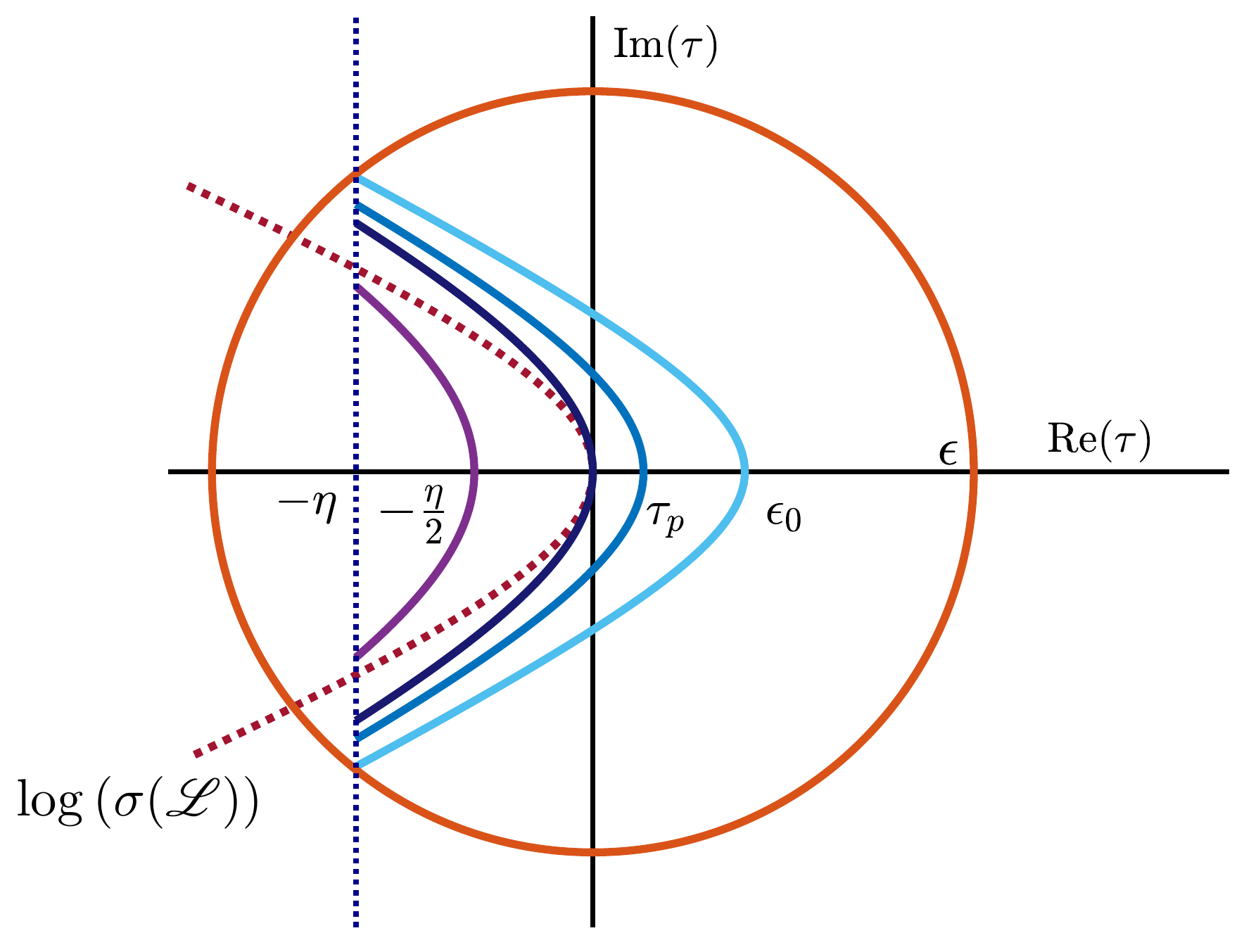}
  \caption{Illustration of the geometry of the family of parametrized curved $\Gamma_p$ within the ball $B_\epsilon(0)$ for different values of $\tau_p \in
  [-\eta/2,\epsilon_0]$. The extremal curves are given when $\tau_p=-\eta/2$ to the left (magenta curve) and when $\tau_p=\epsilon_0$ to the right (light
  blue curve) where $0<\epsilon_0<\epsilon$ is precisely defined such that $\Gamma_p$ with $\tau_p=\epsilon_0$ intersects the segment $\left\{ -\eta
  +\mathbf{i} \, \ell ~|~ \ell \in[-\pi,\pi]\right\}$ on the boundary of $B_\epsilon(0)$. The dashed dark red curve represents the logarithm of the spectrum
  $\sigma(\mathscr{L})$. Note that with our careful choice of parametrization, we have that $\Gamma_p$ with $\tau_p=0$ (dark blue curve) lies to the
  right of the spectral curve with tangency at the origin. }
  \label{fig:contourgamma}
\end{figure}

We now turn to the last case where $n\geq 1$ and $n \, \delta \leq j \leq n \, r$ (recall $\delta=\alpha/2$ and $\alpha<r$). Our generalized Gaussian estimates
will precisely come from this part of the analysis. In order to proceed, we follow the strategy developed in \cite{ZH98} in the fully continuous case (see also
\cite{godillon} in a fully discrete case that corresponds to $\mu=1$ in our notation), and introduce a family of parametrized curves given by
\bqq
\Gamma_p \, := \, \left\{ \, \Re(\tau)-\frac{\beta^*}{\alpha^{2 \, \mu}} \Re(\tau)^{2 \, \mu} \,+ \,\frac{\beta_*}{\alpha^{2 \, \mu}} \Im(\tau)^{2 \, \mu}
\, = \, \Psi\left(\tau_p\right) ~|~ -\eta \leq \Re(\tau)\leq \tau_p \, \right\}
\label{contourGp}
\eqq
with $\Psi\left(\tau_p\right):=\tau_p-\frac{\beta^*}{\alpha^{2 \, \mu}}\tau_p^{2 \, \mu}$. Note that the curves $\Gamma_p$ intersect the real axis at $\tau_p$.
We now explain how we choose $\eta>0$ and $\tau_p>-\eta$ in the above definition of $\Gamma_p$.

First, for each $\epsilon\in(0,\epsilon_*)$, we fix $\eta\in(0,\eta_\epsilon)$ with $\eta_\epsilon>0$ given in Lemma~\ref{lem:3mod} such that the curve
$\Gamma_p$ with $\tau_p=0$ intersects $\left\{-\eta+\mathbf{i} \, \ell ~|~ \ell \in[-\pi,\pi]\right\}$ inside the open ball $B_\epsilon(0)$. Then, we let
$\epsilon_0\in(0,\epsilon)$ which is uniquely defined as the value of $\tau_p$ for which $\Gamma_p$ with $\tau_p=\epsilon_0$ intersects the segment
$\left\{-\eta+\mathbf{i} \, \ell ~|~ \ell \in[-\pi,\pi]\right\}$ precisely on the boundary\footnote{This is possible because the curves $\Gamma_p$ are symmetric
with respect to the real axis.} of $B_\epsilon(0)$ with $\eta$ fixed previously. And finally, the specific value of $\tau_p$ is fixed depending on the ratio
$\frac{\zeta}{\gamma}$ as follows
\bqs
\tau_p \, := \, \left\{
\begin{split}
\rho\left(\frac{\zeta}{\gamma}\right) &\quad \text{ if } \quad  -\frac{\eta}{2}\leq \rho\left(\frac{\zeta}{\gamma}\right) \leq \epsilon_0\,,\\
\epsilon_0 &\quad \text{ if } \quad \rho\left(\frac{\zeta}{\gamma}\right) > \epsilon_0 \, , \\
-\dfrac{\eta}{2} &\quad \text{ if } \quad \rho\left(\frac{\zeta}{\gamma}\right) <- \frac{\eta}{2} \, .
\end{split}
\right.
\eqs
We refer to Figure~\ref{fig:contourgamma}  for an illustration of the geometry of $\Gamma_p$ for different values of $\tau_p$. There only remains to define
$\zeta$, $\gamma$ and the function $\rho$. We let
\bqs
\zeta \, := \, \dfrac{j \, - \, n \, \alpha}{2 \, \mu \, n} \, ,\quad \text{ and } \quad \gamma \, := \, \dfrac{j}{n} \, \dfrac{\beta^*}{\alpha^{2 \, \mu}} \, > \, 0 \, ,
\eqs
and $\rho\left(\frac{\zeta}{\gamma}\right)$ is the unique real root to the equation
\bqs
- \, \zeta \, + \, \gamma \, x^{2 \, \mu-1} \, = \, 0 \, ,
\eqs
that is
\bqs
\rho \left(\frac{\zeta}{\gamma}\right) \, := \, \mathrm{sgn} \left( \dfrac{\zeta}{\gamma} \right) \, \left( \frac{|\zeta|}{\gamma} \right)^{\frac{1}{2 \, \mu-1}} \, .
\eqs

The motivation for introducing such quantities comes from the estimate \eqref{eqestimatebeta} from Lemma~\ref{lem:3mod}. More precisely, for all $\tau
\in \Gamma_p\subset B_\epsilon(0)$, we have
\begin{align*}
j \, \Re(\varpi(\tau)) \, & \, \leq \, j \, \left( - \, \dfrac{1}{\alpha} \, \Re(\tau)
\, + \, \dfrac{\beta^*}{\alpha^{2\, \mu+1}} \, \Re(\tau)^{2 \, \mu}\, - \, \dfrac{\beta_*}{\alpha^{2\, \mu+1}} \, \Im(\tau)^{2 \, \mu} \right) \\
&\, = \, j \, \left( - \, \dfrac{\tau_p}{\alpha}  \, + \, \dfrac{\beta^*}{\alpha^{2\, \mu+1}} \, \tau_p^{2 \, \mu} \right) \\
&\, = \, - \, n \, \tau_p \, + \, \dfrac{n}{\alpha} \, \left( \, - \, 2 \, \mu \, \zeta \, \tau_p \, + \, \gamma \, \tau_p^{2 \, \mu} \right) \, ,
\end{align*}
and our careful choice of $\tau_p$ will always allow us to handle the terms inside the final parenthesis.

We remark that $-\frac{\alpha}{4\mu}\leq\zeta\leq \frac{r-\alpha}{2 \, \mu}$, and our generalized Gaussian estimates will come from those values of
$\zeta \approx 0$. Before proceeding with the analysis, we note that for all $\tau\in\Gamma_p$, we have
\bqq
\label{eqestimateReIm}
\Re(\tau) \, \leq \, \tau_p \, - \, c_* \, \Im(\tau)^{2 \, \mu} \, ,
\eqq
for some constant $c_*>0$. Indeed, we remark that the function $\Psi$, defined as $\Psi(t)=t-\frac{\beta^*}{\alpha^{2 \, \mu}}t^{2 \, \mu}$, satisfies $\Psi'(0)=1$
such that for each $t\in [-\eta,\epsilon]$ one has $\Psi'(t)\leq c_0$ for some $c_0>0$. As a consequence, for each $\tau\in\Gamma_p$ one has
\bqs
- \, \frac{\beta_*}{\alpha^{2 \, \mu}} \, \Im(\tau)^{2 \, \mu} \, = \, \Psi(\Re(\tau)) \, - \, \Psi(\tau_p)
\, = \, - \, \int_{\Re(\tau)}^{\tau_p} \, \Psi'(t) \, \md t \, \geq \, c_0 \, (\Re(\tau)-\tau_p) \, ,
\eqs
which gives the desired estimate \eqref{eqestimateReIm} with $c_*=\frac{\beta_*}{c_0 \, \alpha^{2 \, \mu}}$. Furthermore, a straightforward application of
the implicit function theorem gives the following result on the parametrization of the curves $\Gamma_p$ that we will be using in our estimates below.

\begin{lemma}\label{lemFImp}
There exist $\epsilon_{**} \in(0,\epsilon_*)$, some analytic function $\Phi:(-\epsilon_{**},\epsilon_{**})\times(-\epsilon_{**},\epsilon_{**})\to\R$ and some
constant $C>0$ such that for any $\epsilon\in(0,\epsilon_{**})$ and $\tau_p\in(-\epsilon,\epsilon)$ the curve $\Gamma_p$ can be parametrized as
\bqs
\Gamma_p \, = \, \left\{ \tau \in B_\epsilon(0) ~|~ \Re(\tau) = \Phi(\Im(\tau),\tau_p) \right\} \, ,
\eqs
with
\bqs
\Re(\tau) \, = \, \tau_p \, - \, \frac{\beta^*}{\alpha^{2 \, \mu}} \, \Im(\tau)^{2 \, \mu} \, + \, O\left(|\Im(\tau)|^{2 \, \mu+1}+|\tau_p|^{2 \, \mu+1}\right) \, ,
\eqs
together with
\bqs
\left| \dfrac{\partial \Phi(\Im(\tau),\tau_p)}{\partial \Im(\tau)}\right| \leq C \,,\quad \text{ for each }  |\Im(\tau)| \leq \epsilon  \text{ and  } |\tau_p|\leq \epsilon \, .
\eqs
\end{lemma}

Finally, in what follows, we will use the notation $f \lesssim g$ whenever $f \leq C \, g$ for some constant $C>0$ independent of $j$ and $n$.

\begin{figure}[t!]
  \centering
  \includegraphics[width=.45\textwidth]{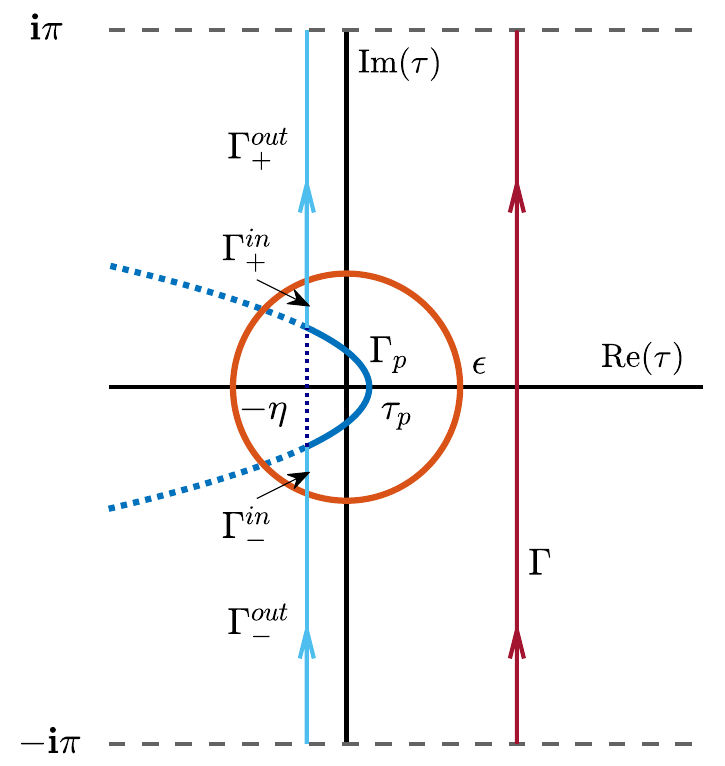}
  \caption{Illustration of the contour used in the case $ -\frac{\eta}{2}\leq \rho\left(\frac{\zeta}{\gamma}\right) \leq \epsilon_0$ when $n \, \delta \leq j
  \leq n \, r$. The contour is composed of $\Gamma_-^{out}\cup\Gamma_-^{in}\cup\Gamma_p\cup\Gamma_+^{in}\cup\Gamma_+^{out}$. The contours
  $\Gamma_\pm^{in}$ and $\Gamma_\pm^{out}$ are the portions of the segment $\Re(\tau)=-\eta$ which lie either inside $B_\epsilon(0)$ or outside
  $B_\epsilon(0)$ with $|\Im(\tau)|\leq \pi$ while $\Gamma_p$ is defined in \eqref{contourGp} and intersects the real axis at $\tau_p$.}
  \label{fig:contour}
\end{figure}

\begin{itemize}
\item We first treat the case $ -\frac{\eta}{2}\leq \rho\left(\frac{\zeta}{\gamma}\right) \leq \epsilon_0$. For all $\tau\in\Gamma_p\subset B_\epsilon(0)$
we obtain
\begin{align*}
n \, \Re(\tau) \, + \, j \, \Re(\varpi(\tau)) \, & \, \leq \, n \, (\Re(\tau)-\tau_p) \, + \, \frac{n}{\alpha} \,
\left( -\, 2 \, \mu \, \zeta \, \tau_p \, + \, \gamma \, \tau_p^{2 \, \mu} \right) \\
&\leq - \, n \, c_* \, \Im(\tau)^{2 \, \mu} \, + \, \frac{n}{\alpha} \, \left( -\, 2 \, \mu \, \zeta \, \tau_p \, + \, \gamma \, \tau_p^{2 \, \mu} \right) \, .
\end{align*}
For the second term, we will use the specific form of $\tau_p=\rho\left(\frac{\zeta}{\gamma}\right)=\mathrm{sgn}(\zeta) \left(
\frac{|\zeta|}{\gamma}\right)^{\frac{1}{2 \, \mu-1}}$ to get that
\bqs
-\, 2 \, \mu \, \zeta \, \tau_p \, + \, \gamma \, \tau_p^{2 \, \mu} \, = \, - \, \gamma \, (2 \, \mu-1) \, \left( \frac{|\zeta|}{\gamma}\right)^{\frac{2 \, \mu}{2 \, \mu-1}}
\, < \, 0 \, .
\eqs
As a consequence, we can derive the following bound
\begin{align*}
\left| \dfrac{1}{2 \, \mathbf{i} \, \pi} \, \int_{\Gamma_p} {\rm e}^{n \, \tau} \, \mathbf{G}_j(\tau) \, \md \tau \right| & \lesssim \int_{\Gamma_p}
{\rm e}^{n\Re(\tau)+j\Re(\varpi(\tau))}|\md \tau| \\
&\lesssim {\rm e}^{-\frac{n}{\alpha} (2 \, \mu-1)\gamma \left(\frac{|\zeta|}{\gamma}\right)^{\frac{2 \, \mu}{2 \, \mu-1}}} \int_{\Gamma_p}
{\rm e}^{-n c_*\Im(\tau)^{2 \, \mu}}|\md \tau|\\
&\lesssim \frac{{\rm e}^{-\frac{n}{\alpha} (2 \, \mu-1)\gamma \left(\frac{|\zeta|}{\gamma}\right)^{\frac{2 \, \mu}{2 \, \mu-1}}}}{n^{\frac{1}{2 \, \mu}}}.
\end{align*}
In the last inequality, assuming that $\epsilon\in(0,\epsilon_{**})$, we have used Lemma~\ref{lemFImp} to get
\bqs
\int_{\Gamma_p}
{\rm e}^{-n c_*\Im(\tau)^{2 \, \mu}}|\md \tau| \lesssim \int_{-\ell_*}^{\ell_*} {\rm e}^{-n c_*x ^{2 \, \mu}}\md x \lesssim \frac{1}{n^{\frac{1}{2 \, \mu}}}\,,
\eqs
where $\ell_*\in(0,\epsilon)$ is defined as $\ell_*:=\left(\frac{\alpha^{2 \, \mu}}{\beta^*}\left(\Psi(\tau_p)-\Psi(-\eta)\right)\right)^{\frac{1}{2 \, \mu}}$, which is
the positive root of
\bqs
- \, \eta \, - \, \frac{\beta^*}{\alpha^{2 \, \mu}} \, \eta^{2 \, \mu} \, + \, \frac{\beta_*}{\alpha^{2 \, \mu}} \, \ell_*^{2 \, \mu}
\, = \, \Psi(\tau_p) \, = \, \tau_p \, - \, \frac{\beta^*}{\alpha^{2 \, \mu}} \, \tau_p^{2 \, \mu} \, .
\eqs

Next we denote by $\Gamma_\pm^{in}$ and $\Gamma_\pm^{out}$ the portions of the segment $\Re(\tau)=-\eta$ which lie either inside $B_\epsilon(0)$
or outside $B_\epsilon(0)$ with $|\Im(\tau)|\leq \pi$. We refer to Figure~\ref{fig:contour} for an illustration. Usual computations lead to
\bqs
\left| \dfrac{1}{2 \, \mathbf{i} \, \pi} \, \int_{\Gamma_\pm^{out}} {\rm e}^{n \, \tau} \, \mathbf{G}_j(\tau) \, \md \tau \right| \leq C \, {\rm e}^{- \, n \, \eta \, - \, c \, j} \, .
\eqs
For all $\tau \in\Gamma_\pm^{in}$, we use that $\Im(\tau)^2\geq \Im(\tau_*)^2$ where $\tau_*=-\eta+\mathbf{i} \, \ell_*$ with $\ell_*=\left(
\frac{\alpha^{2 \, \mu}}{\beta^*}\left(\Psi(\tau_p)-\Psi(-\eta)\right)\right)^{\frac{1}{2 \, \mu}}$. That is $\tau_*=-\eta+\mathbf{i} \, \ell_*$ lies at the intersection
of $\Gamma_p$ and  the segment $\left\{-\eta+\mathbf{i} \, \ell ~|~ \ell \in[-\pi,\pi]\right\}$ with $\tau_*\in B_\epsilon(0)$. As a consequence, for all $\tau \in
\Gamma_\pm^{in}$ we have
\begin{align*}
\Re(\varpi(\tau))& \leq  - \, \dfrac{1}{\alpha} \, \Re(\tau) \, +  \, \dfrac{\beta^*}{\alpha^{2\, \mu+1}} \, \Re(\tau)^{2 \, \mu} \, - \,
\dfrac{\beta_*}{\alpha^{2\, \mu+1}} \, \Im(\tau)^{2 \, \mu} \\
&= \, - \, \frac{\tau_p}{\alpha} \, + \, \frac{\beta^*}{\alpha^{2 \, \mu+1}} \, \tau_p^{2 \, \mu} \, - \, \dfrac{\beta_*}{\alpha^{2\, \mu+1}} \,
\underbrace{\left( \Im(\tau)^{2 \, \mu} -\ell_*^{2 \, \mu}\right)}_{\geq0} \\
&\leq \, - \, \frac{\tau_p}{\alpha} \, + \, \frac{\beta^*}{\alpha^{2 \, \mu+1}} \, \tau_p^{2 \, \mu} \, .
\end{align*}
Thus, we have
\begin{align*}
n\, \Re(\tau) \, + \, j \, \Re(\varpi(\tau))& \leq - \, n \, \eta \, + \, j \, \left( - \, \frac{\tau_p}{\alpha} \, + \, \frac{\beta^*}{\alpha^{2 \, \mu+1}} \, \tau_p^{2 \, \mu}\right) \\
&= \, \frac{n}{\alpha} \, \left[ - \, \eta \, \alpha \, + \, \frac{j}{n} \, \left( -\, \tau_p \, + \, \frac{\beta^*}{\alpha^{2 \, \mu}} \, \tau_p^{2 \, \mu} \right) \right] \\
& \leq \, \frac{n}{\alpha} \, \left[ -\, (\eta+\tau_p) \, \alpha \, - \, 2 \, \mu \, \zeta \, \tau_p \, + \, \gamma \, \tau_p^{2 \, \mu} \right] \\
&= \, \frac{n}{\alpha} \, \left[ -(\eta+\tau_p) \, \alpha \, - \, (2 \, \mu-1) \, \gamma\left(\frac{|\zeta|}{\gamma}\right)^{\frac{2 \, \mu}{2 \, \mu-1}} \right]
\end{align*}
for all $\tau\in\Gamma_\pm^{in}$. Finally,  as $-\frac{\eta}{2} \leq \rho(\frac{\zeta}{\gamma})=\tau_p$ we have $\eta +\tau_p\geq \frac{\eta}{2}$, we obtain an
estimate of the form
\bqs
\left| \dfrac{1}{2 \, \mathbf{i} \, \pi} \, \int_{\Gamma_\pm^{in}} {\rm e}^{n \, \tau} \, \mathbf{G}_j(\tau) \, \md \tau \right| \, \leq \,
C \, {\rm e}^{- \, n \, \frac{\eta}{2} -\frac{n}{\alpha} \, (2 \, \mu-1) \, \gamma \, \left(\frac{|\zeta|}{\gamma}\right)^{\frac{2 \, \mu}{2 \, \mu-1}}} \, .
\eqs
Summarizing, we have obtained
\begin{align*}
\left| \mathscr{G}_j^n \right| & \leq \left| \dfrac{1}{2 \, \mathbf{i} \, \pi} \, \int_{\Gamma_p} {\rm e}^{n \, \tau} \, \mathbf{G}_j(\tau) \, \md \tau \right|
+\left| \dfrac{1}{2 \, \mathbf{i} \, \pi} \, \int_{\Gamma_\pm^{out}} {\rm e}^{n \, \tau} \, \mathbf{G}_j(\tau) \, \md \tau \right|
+\left| \dfrac{1}{2 \, \mathbf{i} \, \pi} \, \int_{\Gamma_\pm^{in}} {\rm e}^{n \, \tau} \, \mathbf{G}_j(\tau) \, \md \tau \right| \\
& \leq C \, \left(\frac{{\rm e}^{-\frac{n}{\alpha} (2 \, \mu-1)\gamma \left(\frac{|\zeta|}{\gamma}\right)^{\frac{2 \, \mu}{2 \, \mu-1}}}}{n^{\frac{1}{2 \, \mu}}}
\, + \,  {\rm e}^{- \, n \, \eta \, - \, c \, j}
\, +\, {\rm e}^{- \, n \, \frac{\eta}{2} -\frac{n}{\alpha} \, (2 \, \mu-1) \, \gamma \, \left(\frac{|\zeta|}{\gamma}\right)^{\frac{2 \, \mu}{2 \, \mu-1}}} \right) \, .
\end{align*}

\item Next, we consider the case $\rho(\zeta/\gamma)>\epsilon_0$ for which we choose $\tau_p=\epsilon_0$. The contour $\Gamma$ is decomposed into
$\Gamma_p\cup\Gamma^{out}_{\pm}$ where $\Gamma_\pm^{out}$ are the portions of the segment $\Re(\tau)=-\eta$ which lie outside $B_\epsilon(0)$ with
$|\Im(\tau)|\leq \pi$. In that case, we have that for all $\tau\in \Gamma_p$
\begin{align*}
n \, \Re(\tau) \, + \, j \, \Re(\varpi(\tau)) \,
&\, \leq \, - \, n \, c_* \, \Im(\tau)^{2 \, \mu} \, + \, \frac{n}{\alpha} \, \left( -\, 2 \, \mu \, \zeta \, \tau_p \, + \, \gamma \, \tau_p^{2 \, \mu} \right) \\
&\, = \, - \, n \, c_* \, \Im(\tau)^{2 \, \mu} \, + \, \frac{n}{\alpha} \, \left( -\, 2 \, \mu \, \zeta \, \epsilon_0 \, + \, \gamma \, \epsilon_0^{2 \, \mu} \right)
\end{align*}
since $\tau_p=\epsilon_0$ in this case.  But as $\rho(\zeta/\gamma)>\epsilon_0$ we get that $\zeta>0$ and $\zeta>\epsilon_0^{2 \, \mu-1}\gamma$, the last
term in the previous inequality is estimated via
\bqs
-\, 2 \, \mu \, \zeta \, \epsilon_0 \, + \, \gamma \, \epsilon_0^{2 \, \mu} \, < \, - \, (2\, \mu-1) \, \gamma \, \epsilon_0^{2 \, \mu} \, < \, 0 \, .
\eqs
As a consequence, we can derive the following bound
\bqs
\left| \dfrac{1}{2 \, \mathbf{i} \, \pi} \, \int_{\Gamma_p} {\rm e}^{n \, \tau} \, \mathbf{G}_j(\tau) \, \md \tau \right|  \lesssim
\frac{{\rm e}^{-\frac{n}{\alpha} (2 \, \mu-1)\gamma \epsilon_0^{2 \, \mu}}}{n^{\frac{1}{2 \, \mu}}}.
\eqs
With our careful choice of $\epsilon_0>0$, the remaining contribution along segments $\Gamma_\pm^{out}$ with $\Re(\tau)=-\eta$ can be estimated as usual as
\bqs
\left| \dfrac{1}{2 \, \mathbf{i} \, \pi} \, \int_{\Gamma_\pm^{out}} {\rm e}^{n \, \tau} \, \mathbf{G}_\tau(j)\md \tau \right| \leq C \, {\rm e}^{- \, n \, \eta \, - \, c \, j} \, ,
\eqs
as $|\tau|\geq \epsilon$ for $\tau\in\Gamma_\pm^{out}$. Summarizing, we have obtained
\bqs
\left| \mathscr{G}_j^n \right| \, \leq  \, \left| \dfrac{1}{2 \, \mathbf{i} \, \pi} \, \int_{\Gamma_p} {\rm e}^{n \, \tau} \, \mathbf{G}_j(\tau) \, \md \tau \right|
+\left| \dfrac{1}{2 \, \mathbf{i} \, \pi} \, \int_{\Gamma_\pm^{out}} {\rm e}^{n \, \tau} \, \mathbf{G}_j(\tau) \, \md \tau \right| \, \leq \, C \,
\left( \frac{{\rm e}^{-\frac{n}{\alpha} (2 \, \mu-1)\gamma \epsilon_0^{2 \, \mu}}}{n^{\frac{1}{2 \, \mu}}} \, + \,  {\rm e}^{- \, n \, \eta \, - \, c \, j} \right) \, .
\eqs

\item It remains to handle the last case $\rho(\zeta/\gamma)<-\eta/2$ for which we choose $\tau_p=-\eta/2$, and we readily note that in this setting $\zeta<0$.
The contour $\Gamma$ is decomposed into $\Gamma_p\cup\Gamma_{\pm}^{out}\cup\Gamma_\pm^{in}$ where once again $\Gamma_\pm^{in}$ and
$\Gamma_\pm^{out}$ are the portions of the segment $\Re(\tau)=-\eta$ which lie either inside $B_\epsilon(0)$ or outside $B_\epsilon(0)$ with $|\Im(\tau)|\leq \pi$.
For all $\tau\in\Gamma_p \subset B_\epsilon(0)$, we find that
$$
n \, \Re(\tau) \, + \, j \, \Re(\varpi(\tau)) \, \leq \, - \, n \, c_* \, \Im(\tau)^{2 \, \mu} \, + \, \frac{n}{\alpha} \,
\left( \mu \, \zeta \, \eta \, + \, \gamma \, \left(\frac{\eta}{2}\right)^{2 \, \mu} \right) \, .
$$
Using that $\rho(\zeta/\gamma)<-\eta/2$ which is equivalent to $\zeta/\gamma<-\left(\frac{\eta}{2}\right)^{2 \, \mu-1}$, we get that
\bqs
\mu \, \zeta \, \eta \, + \, \gamma \, \left(\frac{\eta}{2}\right)^{2 \, \mu} \, < \, - \, (2\, \mu-1) \, \gamma \, \left(\frac{\eta}{2}\right)^{2 \, \mu} \, .
\eqs
As a consequence, we can derive the following bound
\bqs
\left| \dfrac{1}{2 \, \mathbf{i} \, \pi} \, \int_{\Gamma_p} {\rm e}^{n \, \tau} \, \mathbf{G}_j(\tau) \, \md \tau \right|  \lesssim
\frac{{\rm e}^{-\frac{n}{\alpha} (2 \, \mu-1)\gamma\left(\frac{\eta}{2}\right)^{2 \, \mu}}}{n^{\frac{1}{2 \, \mu}}}.
\eqs
As usual, we have that
\bqs
\left| \dfrac{1}{2 \, \mathbf{i} \, \pi} \, \int_{\Gamma_\pm^{out}} {\rm e}^{n \, \tau} \, \mathbf{G}_j(\tau) \, \md \tau \right| \leq C \, {\rm e}^{- \, n \, \eta \, - \, c \, j} \, .
\eqs
It only remains to estimate the contribution on $\Gamma_\pm^{in}$. We proceed as before, and we have that for all $\tau\in\Gamma_\pm^{in}$
\begin{align*}
n \, \Re(\tau) \, + \, j \, \Re(\varpi(\tau)) \, & \, \leq \, \frac{n}{\alpha} \, \left[
- \left( \eta + \tau_p \right) \, \alpha \, - \, 2 \, \mu \, \zeta \, \tau_p \, + \, \gamma \, \tau_p^{2 \, \mu} \right] \\
& \, \le \, \frac{n}{\alpha} \, \left[ - \, \frac{\eta}{2} \, \alpha \, - \, (2\, \mu-1) \, \gamma \, \left(\frac{\eta}{2}\right)^{2 \, \mu} \right] \\
& \, = \, - \, n \, \left( \frac{\eta}{2} \, + \, \frac{(2 \, \mu-1) \, \gamma}{\alpha} \, \left( \frac{\eta}{2} \right)^{2 \, \mu} \right) \, ,
\end{align*}
and this time we obtain
\bqs
\left| \dfrac{1}{2 \, \mathbf{i} \, \pi} \, \int_{\Gamma_\pm^{in}} {\rm e}^{n \, \tau} \, \mathbf{G}_j(\tau) \, \md \tau \right| \leq
C \, {\rm e}^{-n\left( \frac{\eta}{2}+\frac{(2 \, \mu-1)\gamma}{\alpha}\left(\frac{\eta}{2}\right)^{2 \, \mu}\right)} \, .
\eqs
In conclusion, we have obtained
\begin{align*}
\left| \mathscr{G}_j^n \right| & \leq \left| \dfrac{1}{2 \, \mathbf{i} \, \pi} \, \int_{\Gamma_p} {\rm e}^{n \, \tau} \, \mathbf{G}_j(\tau) \, \md \tau \right|
+ \left| \dfrac{1}{2 \, \mathbf{i} \, \pi} \, \int_{\Gamma_\pm^{out}} {\rm e}^{n \, \tau} \, \mathbf{G}_j(\tau) \, \md \tau \right|
+ \left| \dfrac{1}{2 \, \mathbf{i} \, \pi} \, \int_{\Gamma_\pm^{in}} {\rm e}^{n \, \tau} \, \mathbf{G}_j(\tau) \, \md \tau \right| \\
& \leq C \, \left(\frac{{\rm e}^{-\frac{n}{\alpha} (2 \, \mu-1)\gamma\left(\frac{\eta}{2}\right)^{2 \, \mu}}}{n^{\frac{1}{2 \, \mu}}} \, + \,
{\rm e}^{- \, n \, \eta \, - \, c \, j}\, +\, {\rm e}^{-n\left( \frac{\eta}{2}+\frac{(2 \, \mu-1)\gamma}{\alpha}\left(\frac{\eta}{2}\right)^{2 \, \mu}\right)} \right) \, .
\end{align*}
\end{itemize}

As a summary, gathering the above estimates, we can deduce the following result.

\begin{lemma}
\label{lem:9}
For each $\epsilon\in(0,\epsilon_{**})$ there exist constants $C>0$ and $c>0$ such that for $n\geq 1$ and $n \, \delta \leq j \leq n \, r$ the following
estimate holds:
\bqs
\big| \, \mathcal{G}^n_j \, \big| \, \le \, \dfrac{C}{n^{\frac{1}{2 \, \mu}}} \, \exp \left( -c \, \left( \dfrac{|j- \alpha \, n|}{n^{\frac{1}{2 \, \mu}}}
\right)^{\frac{2 \, \mu}{2 \, \mu-1}}\right) \, ,
\eqs
where $\epsilon_{**}>0$ is given in Lemma~\ref{lemFImp}.
\end{lemma}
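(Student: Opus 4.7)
The plan is to deform the contour in the inverse Laplace representation \eqref{lapgreentau} to a steepest-descent contour that passes through the saddle point of the phase $n\tau + j\,\varpi(\tau)$ close to the origin, and then to integrate the resulting localized Gaussian. Since Lemma~\ref{lem:3mod} gives $\varpi(\tau) = -\tau/\alpha + (-1)^{\mu+1}(\beta/\alpha^{2\mu+1})\tau^{2\mu} + O(|\tau|^{2\mu+1})$, a formal critical point computation identifies the candidate saddle as $\tau_\star = \rho(\zeta/\gamma)$ with $\zeta := (j - \alpha n)/(2\mu n)$ and $\gamma := (j/n)\,\beta/\alpha^{2\mu} > 0$, where the exponent saturates and produces the $(|j-\alpha n|/n^{1/(2\mu)})^{2\mu/(2\mu-1)}$ decay rate predicted by the Lemma. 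The assumption $n\delta \le j \le nr$ is what makes $\gamma$ bounded uniformly from below and above, so that the saddle is well-defined and the resulting estimates are uniform in $j/n$.

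The steepest descent contour is the parametrized family $\Gamma_p$ of curves defined in \eqref{contourGp}, crossing the real axis at some chosen $\tau_p$ and closed to the reference line $\Re(\tau) = -\eta$ via segments $\Gamma_\pm^{in}$ inside $B_\varepsilon(0)$ and $\Gamma_\pm^{out}$ outside. I would choose $\tau_p$ as the clipping of $\rho(\zeta/\gamma)$ to $[-\eta/2, \epsilon_0]$, with $\epsilon_0 < \epsilon$ fixed once and for all so that the curve $\Gamma_p$ with endpoint $\epsilon_0$ meets the line $\Re(\tau) = -\eta$ exactly on $\partial B_\varepsilon(0)$. Along $\Gamma_p \subset B_\varepsilon(0)$, the geometric inequalities $\Re(\tau) \le \tau_p - c_*\Im(\tau)^{2\mu}$ and $|\tau|^{2\mu+1} \le C_*(|\tau_p|^{2\mu+1} + |\Im(\tau)|^{2\mu+1})$ yield
\bqs
n\Re(\tau) + j\Re(\varpi(\tau)) \le -\tfrac{n c_*}{2}\Im(\tau)^{2\mu} + \tfrac{n}{\alpha}\bigl(-2\mu\zeta\tau_p + \gamma\tau_p^{2\mu} + C_* |\tau_p|^{2\mu+1}\bigr),
\eqs
so that substituting $\tau_p = \rho(\zeta/\gamma) = \mathrm{sgn}(\zeta)(|\zeta|/\gamma)^{1/(2\mu-1)}$ in the interior case collapses the $\tau_p$-bracket to $(-2\mu+1)\gamma(|\zeta|/\gamma)^{2\mu/(2\mu-1)}/2$, while the Gaussian integration in $\Im(\tau)$ supplies the prefactor $n^{-1/(2\mu)}$. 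In the boundary cases $\rho(\zeta/\gamma) > \epsilon_0$ or $\rho(\zeta/\gamma) < -\eta/2$, substituting $\tau_p = \epsilon_0$ or $\tau_p = -\eta/2$ still forces the bracket to be strictly negative of order $|\zeta|^{2\mu/(2\mu-1)}$, uniformly bounding the corresponding restricted Gaussian estimate.

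The outer segments $\Gamma_\pm^{out}$ are easy because Lemma~\ref{lem:3mod} provides the exponential decay $|\mathbf{G}_\tau(j)| \le C \, e^{-c|j|}$ outside $B_\varepsilon(0)$, so their contribution is bounded by $C \, e^{-n\eta - cj}$, which the target bound absorbs. The main obstacle, and where most of the work really sits, is the treatment of $\Gamma_\pm^{in}$ in the interior-saddle case: there $\Im(\tau)$ is comparable to the ordinate $\ell_\star$ at which $\Gamma_p$ intersects the line $\Re(\tau)=-\eta$, and one must show that perturbing $\Im(\tau)$ from $\ell_\star$ modifies $\Re(\varpi(\tau))$ only by an $O(\epsilon^{2\mu})$ error. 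The crucial slack comes from the inequality $\eta + \tau_p \ge \eta/2$ (which is precisely why the clipping interval is chosen asymmetric), combined with $j/n \le r$ and $\epsilon$ small; this produces an extra factor $e^{-n\eta/4}$ that absorbs the residual errors and aligns the $\Gamma_\pm^{in}$ bound with the one on $\Gamma_p$, up to adjusting the constant $M$. Concatenating the three subcases then gives the single generalized Gaussian bound stated in the Lemma.
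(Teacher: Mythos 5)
Your proposal follows essentially the same route as the paper: the same family of curves $\Gamma_p$ from \eqref{contourGp} with $\tau_p$ obtained by clipping $\rho(\zeta/\gamma)$ to $[-\eta/2,\epsilon_0]$, the same decomposition into $\Gamma_p$, $\Gamma_\pm^{in}$, $\Gamma_\pm^{out}$ with the same bounds on each piece (including the key slack $\eta+\tau_p\ge\eta/2$ and the Gaussian integration in $\Im(\tau)$ producing $n^{-1/(2\mu)}$), and the same final observation that the purely exponential-in-$n$ contributions from the clipped cases are absorbed into the generalized Gaussian bound since $|\zeta|$ stays bounded for $n\delta\le j\le nr$. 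The argument is correct as proposed.
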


\begin{proof}
One only needs to check that the purely exponentially decaying in $n$ contributions obtained when $\rho\left(\frac{\zeta}{\gamma}\right)>\epsilon_0$
or $\rho\left(\frac{\zeta}{\gamma}\right)<-\frac{\eta}{2}$ can be subsumed into generalized Gaussian estimates. For example, in the case
$\rho\left(\frac{\zeta}{\gamma} \right)>\epsilon_0$, there exists some small constant $c>0$ such that
\bqs
- \, n \, \leq \, - \, c \, \left( \dfrac{|j- \alpha \, n|}{n^{\frac{1}{2 \, \mu}}} \right)^{\frac{2 \, \mu}{2 \, \mu-1}} \, ,
\eqs
as
\bqs
\dfrac{\beta^*}{2 \, \alpha^{2 \, \mu-1}} \, \leq \, \gamma \, \leq \, \dfrac{\beta^*}{\alpha^{2 \, \mu}} \, r \, ,
\eqs
and
\bqs
\frac{j}{n \, \alpha }-1 \, = \, \dfrac{2 \, \mu \, \zeta}{\alpha} \, > \, \dfrac{2 \, \mu}{\alpha} \, \gamma \, \epsilon_0^{2\, \mu-1} \, \geq \,
\dfrac{\mu \, \beta^*}{\alpha^{2 \, \mu}} \, \epsilon_0^{2 \, \mu-1} \, .
\eqs
All other cases can be dealt with in a similar way.
\end{proof}

\begin{proof}[Proof of Theorem~\ref{thm:1}]
Combining Lemma~\ref{lem:7}, Lemma~\ref{lem:8} and Lemma~\ref{lem:9}  proves our main Theorem~\ref{thm:1} in the explicit case with $K=1$.
Indeed, we first fix $\epsilon \in \left(0,\min\left( \epsilon_{**}, \left(\frac{\alpha^{2 \, \mu}}{2\beta^*}\right)^{\frac{1}{2 \, \mu-1}}\right)\right)$ with $0 <
\epsilon_{**}<\epsilon_*$ from Lemma~\ref{lemFImp}, and then we fix $\eta\in(0,\eta_\epsilon)$ with $\eta_\epsilon>0$ given in Lemma~\ref{lem:3mod}
such that the curve $\Gamma_p$ with $\tau_p=0$ intersects $\left\{-\eta+\mathbf{i} \, \ell ~|~ \ell \in[-\pi,\pi]\right\}$ inside the open ball $B_\epsilon(0)$.
As a consequence, we can apply estimates from Lemma~\ref{lem:7}, Lemma~\ref{lem:8} and Lemma~\ref{lem:9} with this specific choice of $\epsilon$
and $\eta$, which gives the proof since purely exponentially decaying in $n$ bounds obtained in Lemma~\ref{lem:7} and Lemma~\ref{lem:8} can be
subsumed into generalized Gaussian estimates.
\end{proof}

\begin{figure}[t!]
\centering
\includegraphics[width=.48\textwidth]{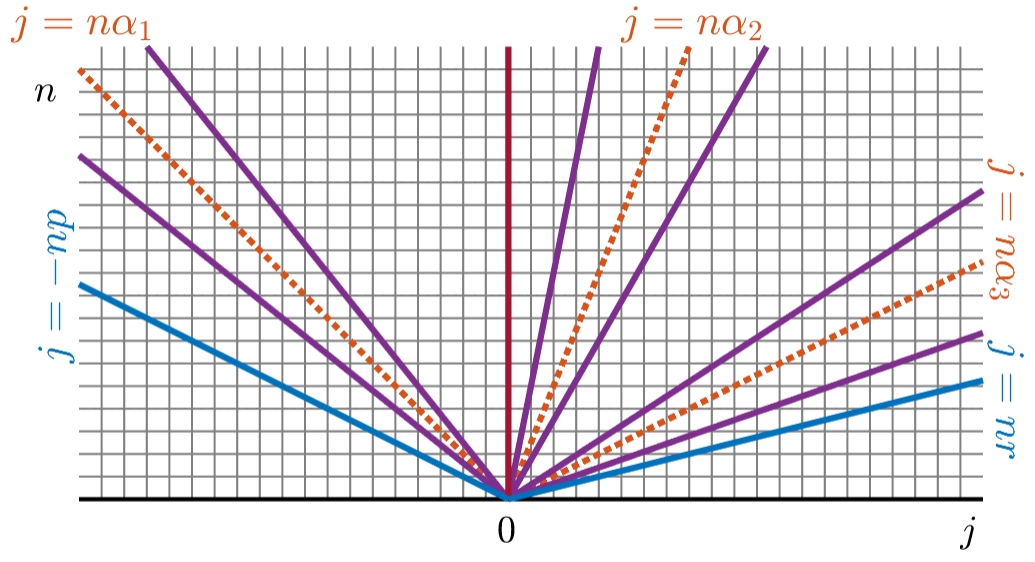}
\includegraphics[width=.48\textwidth]{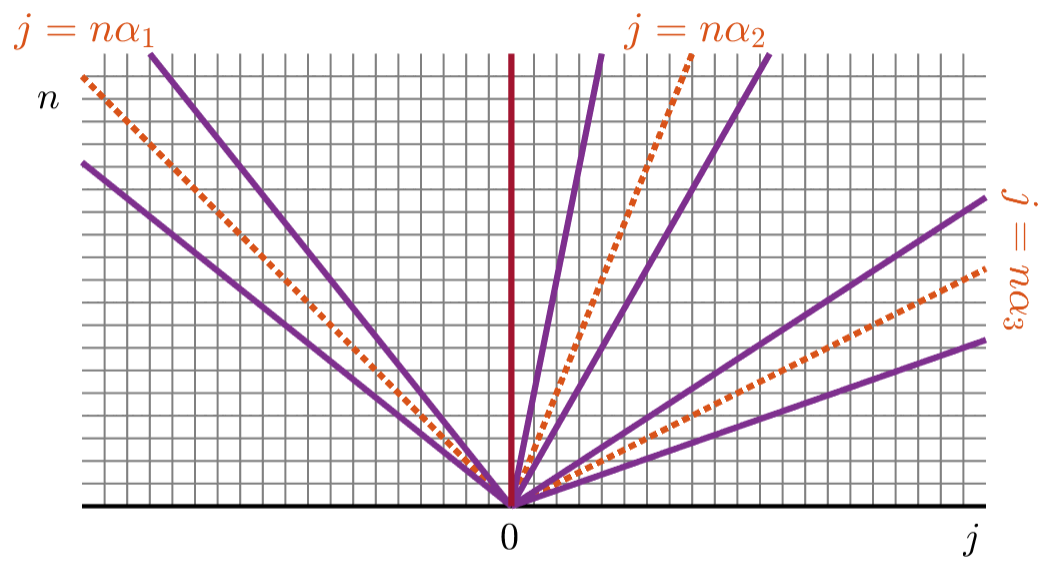}
  \caption{Illustration for the explicit (left) and implicit (right) cases of the different domains in the $(j,n)$ plane where generalized Gaussian estimates are
  obtained; here $K=3$ with $\alpha_1<0$ and $\alpha_{2}\neq \alpha_3>0$. Generalized Gaussian estimates are derived near each $j\approx n \, \alpha_k$,
  $k=\left\{1,2,3\right\}$ in the interior of the sectors delimited by the magenta lines. In the explicit case, below the lines $j=-n \, p$ and $j=n \, r$ (blue), the
  Green's function $\G^n_j$ vanishes.}
  \label{fig:spacetime}
\end{figure}

\subsection{The implicit case with $K=1$}

The main difference compared with the explicit case is that now it is no longer true that $\G^n_j$ vanishes for $j>n \, r$ or $j<-p \, n$. Nevertheless we observe
that the results of Lemma~\ref{lem:7}, Lemma~\ref{lem:8} and Lemma~\ref{lem:9} still hold true. Actually, the proofs of Lemma~\ref{lem:7}, Lemma~\ref{lem:8}
and Lemma~\ref{lem:9} naturally extend to $-n \, L \leq j \leq 0$ (Lemma~\ref{lem:7}), $1 \leq j \leq n \, \delta$ (Lemma~\ref{lem:8}, unchanged) and
$n \, \delta \leq j \leq n \, L$ (Lemma~\ref{lem:9}) for any large constant $L \geq \max( p,r)$ that is fixed a priori. As a consequence, one only needs to consider
the case $n\geq 1$ and $|j| > n \, L$ for some large constant $L>0$ to be determined. To obtain the desired estimate in that case, we use the bound at infinity
obtained in Lemma~\ref{lem:5}. More precisely, there exists $R\geq \pi/2$ and two constants $C>0$, $\underline{c}>0$ such that there holds
\begin{equation}
\label{estiminfini}
\forall \, \tau \in \left\{ \zeta \in \C ~|~ \Re(\zeta) \geq \log R \right\} \, , \quad \forall\, j \in \Z \, , \quad
\big| \, \mathbf{G}_j(\tau) \, \big| \, \le \, C \, \exp \big( -\underline{c} \, |j| \, \big) \, .
\end{equation}
We then have the following result.

\begin{lemma}\label{lem:10}
Let $L \geq \max( p,r) >0$ be large enough such that $L>2 \, \log R \, / \, \underline{c}$ with $R$ and $\underline{c}$ as in \eqref{estiminfini}. Then, there exists
$C>0$, such that for $n\geq 1$ and $|j|>nL$, we have
\bqs
\big| \, \G^n_j \, \big| \, \le \, C \, \exp \left(- \, n \, \dfrac{\underline{c} \, L}{4} \, - \, \dfrac{\underline{c}}{2} \, |j| \, \right) \, .
\eqs
\end{lemma}

\begin{proof}
In \eqref{lapgreentau}, we now use the contour $\Gamma = \left\{ \frac{\underline{c} \, |j|}{2 \, n} +\mathbf{i} \, \ell ~|~ \ell \in[-\pi,\pi] \right\}$. With our choice of $L$, we
have that for all $\tau \in \Gamma$, $\Re(\tau) = \frac{\underline{c} \, |j|}{2 \, n} \geq \frac{\underline{c} L}{2}> \log R$ and so
\bqs
\left| \dfrac{1}{2 \, \mathbf{i} \, \pi} \, \int_{\Gamma} {\rm e}^{n \, \tau} \, \mathbf{G}_j(\tau) \, \md \tau \right| \, \leq \, C \, {\rm e}^{- \, \frac{\underline{c}}{2} \, |j|} \, .
\eqs
Finally, we notice that
\bqs
- \, \dfrac{\underline{c}}{2} \, |j| \, \leq \, - \, n \, \dfrac{\underline{c} \, L}{4} \, - \, \dfrac{\underline{c}}{4} \, |j| \, , \quad \text{ for } |j| \, > \, n \, L \, .
\eqs
This completes the proof of the lemma.
\end{proof}

The proof of Theorem~\ref{thm:1} in the implicit case (for $K=1$) then follows from the combination of the slight extensions of Lemma~\ref{lem:7}, Lemma~\ref{lem:8}
and Lemma~\ref{lem:9} (once the large constant $L$ is fixed as in Lemma~\ref{lem:10}).

\subsection{The explicit and implicit cases with $K>1$}

We now briefly explain how to handle the general case with $K>1$ and refer to Figures~\ref{fig:spacetime}-\ref{fig:contourball} for illustrations. From
Assumption~\ref{hyp:1}, we have the existence of $K$ tangency points $\underline{\kappa}_k$ with associated nonzero real numbers $\alpha_k$. We
will distinguish two cases:
\begin{itemize}
\item[{\bf A}.] All $\alpha_k$ are distinct from one and another.
\item[{\bf B}.] There exist two or more $\alpha_k$ which are equal.
\end{itemize}
We only discuss the explicit case here, as the implicit case does not distinguish between Cases {\bf A} and {\bf B}. First, we let be $\tau_k = \mathbf{i} \, \theta_k
:= \log (\underline{z}_k)$ for $\theta_k \in [-\pi,\pi]$ and $\widetilde{\theta}_k\in]-\pi,\pi]$ be such that $\underline{\kappa}_k=\rm{e}^{\mathbf{i} \, \widetilde{\theta}_k}$
for each $k=1,\cdots,K$. In order to proceed, we need the following lemma which is a direct consequence of Corollary~\ref{cor1} and whose proof is identical to
Lemma~\ref{lem:3mod}.

\begin{lemma}\label{lemkgen}
There exist some $\epsilon_*>0$ and two constants $0<\beta_* <\Re(\beta_k)<\beta^*$ for $k=1,\cdots,K$ such that for each $\epsilon\in(0,\epsilon_*)$
there exist some width $\eta_\epsilon>0$ together with two constants, still denoted $C>0$, $c>0$, such that, for any integer $j\in\Z$, the component
$\mathbf{G}_j(\tau)$ extends holomorphically on each $ B_\epsilon(\tau_k)$ with bounds:
\bqs
\forall \, \tau \in B_\epsilon(\tau_k) \, ,\quad \forall \, j \in \Z \, ,\quad \big| \, \mathbf{G}_j(\tau) \, \big| \, \le \, \begin{cases}
C \, \exp \big( - \, c \, |j| \, \big) \, ,& \text{\rm if $j \, \le \, 0$,} \\
C \, \exp \big( j \, \Re(\varpi_k(\tau)) \big) \, ,& \text{\rm if $j \, \ge \, 1$,}
\end{cases} \quad \mathrm{ (Case~I)}
\eqs
or
\bqs
\forall \, \tau \in B_\epsilon(\tau_k) \, ,\quad \forall \, j \in \Z \, ,\quad \big| \, \mathbf{G}_j(\tau) \, \big| \, \le \, \begin{cases}
C \, \exp \big( j \, \Re(\varpi_k(\tau)) \big) \, ,& \text{\rm if $j \, \le \, 0$,} \\
C  \, \exp \big( -c \, |j| \, \big) \, ,& \text{\rm if $j \, \ge \, 1$,}
\end{cases} \quad \mathrm{ (Case~II)}
\eqs
or
\bqs
\forall \, \tau \in B_\epsilon(\tau_k) \, ,\quad \forall \, j \in \Z \, ,\quad \big| \, \mathbf{G}_j(\tau) \, \big| \, \le \, \begin{cases}
C  \, \exp \big( j \, \Re(\varpi_{\nu_{k,1}}(\tau)) \big) \, ,& \text{\rm if $j \, \le \, 0$,} \\
C  \, \exp \big( j \, \Re(\varpi_{\nu{k,2}}(\tau)) \big) \, ,& \text{\rm if $j \, \ge \, 1$,}
\end{cases} \quad \mathrm{ (Case~III)}
\eqs
where each $\varpi_k$ is holomorphic on $B_\epsilon(\tau_k)$ and has the Taylor expansion:
\bqs
\varpi_k(\tau) \, = \, \mathbf{i} \, \widetilde{\theta}_k \, - \, \dfrac{1}{\alpha_k} \, (\tau-\tau_k) \, + \,
(-1)^{\mu_k+1} \, \dfrac{\beta_k}{\alpha_k^{2 \, \mu_k+1}} \, (\tau-\tau_k)^{2 \, \mu_k}
\, + \, O\left(|\tau-\tau_k|^{2 \, \mu_k+1}\right) \, ,\quad \forall \, \tau \in B_\epsilon(\tau_k) \, ,
\eqs
together with
\bqq
\Re(\varpi_k(\tau)) \, \leq \, - \, \dfrac{\Re(\tau)}{\alpha_k} \, +  \, \dfrac{\beta^*}{\alpha_k^{2 \, \mu_k+1}} \, \Re(\tau)^{2 \, \mu_k}
\, - \, \dfrac{\beta_*}{\alpha_k^{2 \, \mu_k+1}} \, \left(\Im(\tau)-\theta_k\right)^{2 \, \mu_k}  \, ,\quad \forall \, \tau \in B_\epsilon(\tau_k) \, .
\label{eqestimatebetak}
\eqq
Furthermore, we have
\bqs
\forall \, \tau \in \Omega_\epsilon \, := \, \left\{ \, - \, \eta_\epsilon \, < \, \Re(\tau) \, \leq \, \pi \, \right\} \backslash \, \bigcup_{k=1}^K B_\epsilon(\tau_k) \, ,\quad
\forall \, j \in \Z \, ,\quad \big| \, \mathbf{G}_j(\tau) \, \big| \, \le \, C \, \exp \big( -c \, |j| \, \big) \, .
\eqs
\end{lemma}

\paragraph{Case A.} This is precisely the case depicted in Figure~\ref{fig:spacetime} with $K=3$.  Without loss of generality we label the $\alpha_k$
by increasing order such that
\bqs
-p <\alpha_1<\cdots<\alpha_k<\cdots<\alpha_K<r.
\eqs
For each $k=1,\cdots,K$ we define two real numbers $\underline{\delta}_k<\overline{\delta}_k$ such that we have the ordering
\bqs
-p<\underline{\delta}_1<\alpha_1<\overline{\delta}_1<\cdots<\underline{\delta}_k<\alpha_k<\overline{\delta}_k<\cdots<\underline{\delta}_K<\alpha_K
<\overline{\delta}_K<r \, ,
\eqs
with $\mathrm{sgn}(\underline{\delta}_k)=\mathrm{sgn}(\overline{\delta}_k)=\mathrm{sgn}(\alpha_k)$. For each $k=1,\cdots,K$, we define the following
sectors in the $(j,n)$-plane:
\bqs
\mathcal{D}_k \, := \, \left\{ (j,n) \in \Z \times \N^* ~|~ n \, \underline{\delta}_k \, \leq \, j \, \leq \, n \, \overline{\delta}_k\right\} \, ,
\eqs
together with
\bqs
\mathcal{D}_* \, := \, \left\{ (j,n) \in \Z \times \N^* ~|~ - \, n \, p \, \leq \, j \, \leq \, n \, r \right\} \, \backslash \, \bigcup_{k=1}^K \mathcal{D}_k \, .
\eqs
Our first lemma  pertains at obtaining exponential bounds in the region $\mathcal{D}_*$. We introduce two quantities
\bqs
\Lambda_1^* \, := \, \underset{k=1,\cdots,K}{\min} \left( \frac{\alpha_k^{2 \, \mu_k}}{2 \, \beta^*} \right)^\frac{1}{2 \, \mu_k-1} \, > \, 0 \, ,\quad
\text{ and } \quad
\Lambda_2^* \, := \, \underset{k=1,\cdots,K}{\min} \left( \frac{(\overline{\delta}_k-\alpha_k) \, \alpha_k^{2 \, \mu_k}}{2 \, r \, \beta^*} \right)^\frac{1}{2 \, \mu_k-1}
\, > \, 0 \, .
\eqs

\begin{figure}[t!]
\centering
\includegraphics[width=.3\textwidth]{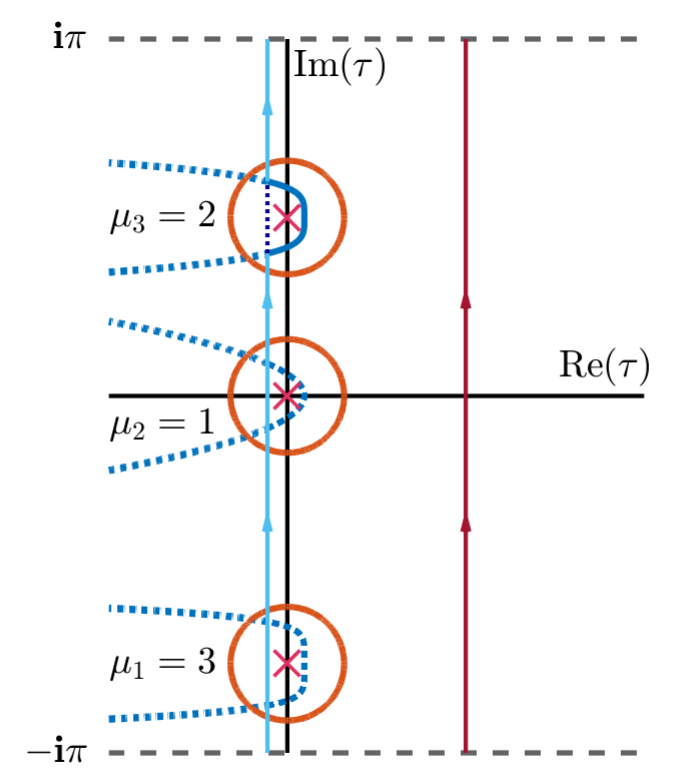}
\includegraphics[width=.3\textwidth]{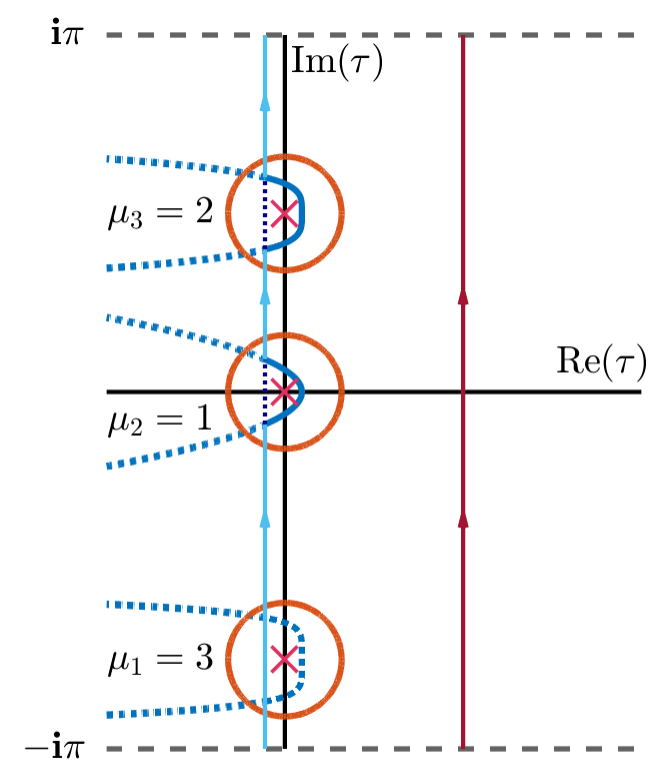}
\includegraphics[width=.3\textwidth]{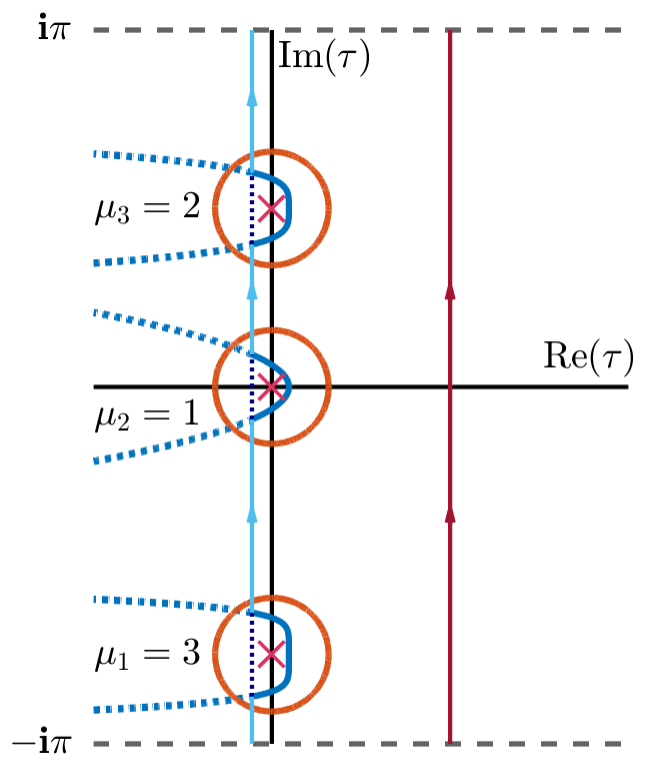}
  \caption{Left: typical contour used when $(j,n) \in\mathcal{D}_3$ in the case depicted in Figure~\ref{fig:spacetime} where all $\alpha_k$ are distinct
  $\alpha_1<\alpha_2<\alpha_3$ (case {\bf A}). Middle: typical contour used when $(j,n)\in\mathcal{D}_2=\mathcal{D}_3$ in the case $\alpha_1<
  \alpha_2=\alpha_3$ (case {\bf B}). Right: typical contour used when $(j,n)\in\mathcal{D}_1=\mathcal{D}_2=\mathcal{D}_3$ in the case $\alpha_1=
  \alpha_2=\alpha_3$ (case {\bf B}). Here $K=3$.}
  \label{fig:contourball}
\end{figure}

\begin{lemma}
\label{lem:11}
For each $\epsilon \in(0,\min(\epsilon_*,\Lambda_1^*,\Lambda_2^*))$, there exist $C>0$ and $\delta>0$ such that for each $(j,n) \in \mathcal{D}_*$ the
following estimate holds:
\bqs
\big| \, \mathcal{G}^n_j \, \big| \, \le \, C \, {\rm e}^{- \, n \, \delta} \, .
\eqs
\end{lemma}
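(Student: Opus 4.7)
The plan is to adapt the contour-integral argument of Section~\ref{section3} to the multi-tangency setting. Starting from \eqref{lapgreentau}, I deform the initial vertical contour into a single closed curve $\widetilde\Gamma$ that coincides with the vertical segment $\Re(\tau)=-\eta$ outside the balls $B_\varepsilon(\underline{\tau}_k)$, $k=1,\dots,K$, for some small $\eta>0$, and inside each $B_\varepsilon(\underline{\tau}_k)$ is replaced by a local piece chosen according to the data attached to the $k$-th tangency point. The horizontal segments $\Im(\tau)=\pm\pi$ compensate by periodicity, and the holomorphic extensions granted by Corollary~\ref{cor1} (or Corollary~\ref{cor2} in the implicit case) make the deformation legitimate.

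For each $k$, I apply inside $B_\varepsilon(\underline{\tau}_k)$ the local version of the argument of Section~\ref{section3}, centered at $\underline{\tau}_k$ and with the local parameters $\alpha_k,\beta_k,\mu_k$. Setting
$$
\zeta_k \, := \, \dfrac{j-n\,\alpha_k}{2\,\mu_k\,n} \, ,\qquad
\gamma_k \, := \, \dfrac{|j|}{n}\,\dfrac{\beta_k}{|\alpha_k|^{2\,\mu_k}} \, ,
$$
I distinguish two subcases. When $\mathrm{sgn}(j)=\mathrm{sgn}(\alpha_k)$, Lemma~\ref{lem:3} or Lemma~\ref{lem:4} yields $|\mathbf{G}_\tau(j)|\le C\,|\kappa_k(\mathrm{e}^\tau)|^{|j|}$ on the ball, and I use the local contour $\underline{\tau}_k+\Gamma_p^{(k)}$ from \eqref{contourGp} (with the appropriate mirroring when $\alpha_k<0$), selecting the intercept $\tau_p^{(k)}$ exactly as in Section~\ref{section3} according to the value of $\rho(\zeta_k/\gamma_k)$. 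When $\mathrm{sgn}(j)\ne\mathrm{sgn}(\alpha_k)$, Lemma~\ref{lem:3} or Lemma~\ref{lem:4} already delivers $|\mathbf{G}_\tau(j)|\le Ce^{-c|j|}$ on $B_\varepsilon(\underline{\tau}_k)$, and it is enough to let $\widetilde\Gamma$ follow the segment $\Re(\tau)=-\eta$ straight across the ball.

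The \emph{crucial observation} is that for $(j,n)\in\mathcal{D}_*$, the assumption $j/n\notin[\underline{\delta}_k,\overline{\delta}_k]$ enforces
$$
|\zeta_k| \, \ge \, \dfrac{1}{2\,\mu_k}\,\min\bigl(\alpha_k-\underline{\delta}_k,\,\overline{\delta}_k-\alpha_k\bigr) \, > \, 0 \, ,
$$
uniformly in $(j,n)$, while in the same-sign subcase $\gamma_k$ stays pinched between two positive constants thanks to $|j/n|\le\max(p,r)$. Hence $|\rho(\zeta_k/\gamma_k)|$ is bounded below by a positive constant independent of $(j,n)\in\mathcal{D}_*$. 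Shrinking $\epsilon_0$ and $\eta$ a posteriori, we may thus ensure that the three-case dichotomy of Section~\ref{section3} always falls into one of the two non-Gaussian regimes ($\rho>\epsilon_0$ or $\rho<-\eta/2$, Cases~2 and~3 in the proof of Lemma~\ref{lem:9}); in either case the contribution of $\underline{\tau}_k+\Gamma_p^{(k)}$ is bounded by $Ce^{-n\delta_k}$ for some $\delta_k>0$. The opposite-sign contributions and the outside-ball contribution each integrate to at most $Ce^{-n\eta-c|j|}\le Ce^{-n\eta}$, thanks to Corollary~\ref{cor1} (or Corollary~\ref{cor2}). Setting $\delta_*:=\min(\eta,\delta_1,\dots,\delta_K)>0$ completes the proof. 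The main technical obstacle lies in the bookkeeping: for each of the $K$ balls one must dispatch the analysis according to the type of tangency point (cases~I, II or III of Lemma~\ref{lem:1}) and the sign of $j$ relative to $\alpha_k$, and verify in each configuration that the local contour stays inside $B_\varepsilon(\underline{\tau}_k)$ for the prescribed range of $\tau_p^{(k)}$; each individual verification, however, reproduces essentially verbatim the corresponding one already performed in Section~\ref{section3} for $K=1$.
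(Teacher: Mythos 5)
Your overall contour strategy matches the paper's, and several pieces (the outside-ball contribution, the opposite-sign subcase, the above-sector subcase where $j/n\ge\overline{\delta}_k$) are sound. But the pivotal step is wrong: you claim that in the same-sign subcase $\gamma_k$ ``stays pinched between two positive constants thanks to $|j/n|\le\max(p,r)$''. That inequality only bounds $\gamma_k$ from \emph{above}. On the medium-range part of $\mathcal{D}_*$ --- say $\alpha_k>0$ and $1\le j\le n\,\underline{\delta}_k$, in particular $j$ fixed and $n\to\infty$ --- one has $\gamma_k=\frac{j}{n}\frac{\beta_k}{\alpha_k^{2\mu_k}}\to 0$ while $\zeta_k\to-\alpha_k/(2\mu_k)$. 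You are then indeed in the regime $\rho(\zeta_k/\gamma_k)<-\eta/2$, but the bound proved there in Lemma \ref{lem:9} is
$\lesssim n^{-1/(2\mu_k)}\exp\bigl(-\tfrac{n}{\alpha_k}\tfrac{(2\mu_k-1)\gamma_k}{2}(\eta/2)^{2\mu_k}\bigr)$,
whose exponent is proportional to $n\,\gamma_k\propto |j|$, not to $n$; for bounded $j$ it gives no decay in $n$ at all, so the asserted $C\,{\rm e}^{-n\delta_k}$ does not follow. The exponential-in-$n$ rate in Cases 2--3 of Lemma \ref{lem:9} rests precisely on the lower bound $\gamma\ge \beta\delta/\alpha^{2\mu}$, which is available there because the analysis is restricted to $j\ge n\delta$; it is unavailable on $\mathcal{D}_*$.

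The paper closes exactly this gap by treating the two halves of $\mathcal{D}_*$ adjacent to $\mathcal{D}_k$ differently: for $1\le j\le n\,\underline{\delta}_k$ it keeps the straight segment $\Re(\tau)=-\eta$ across $B_\varepsilon(\underline{\tau}_k)$ and runs the Lemma \ref{lem:8}-type estimate, using $j/(n\alpha_k)\le\underline{\delta}_k/\alpha_k<1$ to get $-n\eta+j\,\Re(\varpi_k(\tau))\le -\tfrac{n\eta}{2}\bigl(1-\underline{\delta}_k/\alpha_k\bigr)$; and for $j\ge n\,\overline{\delta}_k$, where the straight segment would fail ($j/(n\alpha_k)>1$), it deforms the segment locally to the right into the wedge $\widetilde{\Gamma}_k$ lying \emph{outside} the ball, with $\Re(\tau)\le\tfrac{c}{2}\tfrac{j}{n}$, so that the exponential spatial decay of $\mathbf{G}_\tau(j)$ off the tangency ball yields ${\rm e}^{-cj/2}\le {\rm e}^{-c\,n\,\overline{\delta}_k/2}$. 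Your scheme could be repaired without changing contours by not discarding $\zeta_k$: when $\tau_{p,k}=-\eta/2$, keep the term $-2\mu_k\zeta_k\tau_{p,k}=\mu_k\zeta_k\eta$ in the exponent; since $\zeta_k\le-\tfrac{1}{2\mu_k}(\alpha_k-\underline{\delta}_k)<0$ uniformly on this part of $\mathcal{D}_*$, that term alone gives $-c\,n\,\eta$ for $\eta$ small. As written, however, the justification of the central claim fails.
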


\begin{proof}
We only sketch the proof as it is almost identical to the proofs of Lemma~\ref{lem:7} and Lemma~\ref{lem:8}. Let $\epsilon\in(0,\min(\epsilon_*, \Lambda_1^*,
\Lambda_2^*))$ and consider $\eta\in(0,\eta_\epsilon)$. We select the contour $\Gamma = \left\{ -\eta+\mathbf{i} \, \ell ~|~ \ell \in[-\pi,\pi]\right\}$ and we denote
by $\Gamma^{in}_k$ the portion of $\Gamma$ which lie within $B_\epsilon(\tau_k)$ and $\Gamma^{out}$ the union of the remaining portions. As a consequence,
we have $\Gamma=\Gamma^{in}_1 \cup \cdots \cup \Gamma^{in}_K\cup\Gamma^{out}$, and we get
\bqs
\left| \dfrac{1}{2 \, \mathbf{i} \, \pi} \, \int_{\Gamma} {\rm e}^{n \, \tau} \, \mathbf{G}_j(\tau) \, \md \tau \right| \leq
\sum_{k=1}^K\left| \dfrac{1}{2 \, \mathbf{i} \, \pi} \, \int_{\Gamma^{in}_k} {\rm e}^{n \, \tau} \, \mathbf{G}_j(\tau) \, \md \tau \right|
+\left| \dfrac{1}{2 \, \mathbf{i} \, \pi} \, \int_{\Gamma^{out}} {\rm e}^{n \, \tau} \, \mathbf{G}_j(\tau) \, \md \tau \right| \, .
\eqs
Our objective is to bound each above term separately. Along $\Gamma^{out}$, we get an estimate of the form
\bqs
\left| \dfrac{1}{2 \, \mathbf{i} \, \pi} \, \int_{\Gamma^{out}} {\rm e}^{n \, \tau} \, \mathbf{G}_j(\tau) \, \md \tau \right| \leq
C \, {\rm e}^{- \, n \, \eta \, - \, c \, |j|} \, , \quad \forall \, (j,n) \in \mathcal{D}_* \, ,
\eqs
as along $\Gamma^{out}$ the Green's function $\mathbf{G}_j(\tau)$ enjoys the pointwise exponential bound from Lemma~\ref{lemkgen}.

We now derive pointwise bound for each contour integral along $\Gamma^{in}_k$, $k=1,\cdots,K$. We first handle the case where $\mathcal{I}_k =
\left\{ k \right\}$, and assume without loss of generality that $\alpha_k>0$. Then Case I of Lemma~\ref{lemkgen} reads
\bqq
\forall \, \tau \in B_\epsilon(\tau_k) \, ,\quad \forall \, j \in \Z \, ,\quad \big| \, \mathbf{G}_j(\tau) \, \big| \, \le \, \begin{cases}
C \, \exp \big( - \, c \, |j| \, \big) \, ,& \text{\rm if $j \, \le \, 0$,} \\
C \, \exp \big( j \,\Re(\varpi_k(\tau)) \big) \, ,& \text{\rm if $j \, \ge \, 1$,}
\end{cases}
\label{boundnear0k}
\eqq
with
\bqs
\Re(\varpi_k(\tau)) \, \leq \, - \, \dfrac{\Re(\tau)}{\alpha_k} \, +  \, \dfrac{\beta^*}{\alpha_k^{2 \, \mu_k+1}} \, \Re(\tau)^{2 \, \mu_k}
\, - \, \dfrac{\beta_*}{\alpha_k^{2 \, \mu_k+1}} \, \left(\Im(\tau)-\theta_k\right)^{2 \, \mu_k}  \, ,\quad \forall \, \tau \in B_\epsilon(\tau_k) \, .
\eqs

If $(j,n) \in \mathcal{D}_*$ is such that $j \leq 0$, then we directly get
\bqs
\left| \dfrac{1}{2 \, \mathbf{i} \, \pi} \, \int_{\Gamma^{in}_k} {\rm e}^{n \, \tau} \, \mathbf{G}_j(\tau) \, \md \tau \right| \, \leq \, C \, {\rm e}^{- \, n \, \eta \, - \, c \, |j|} \,.
\eqs
From now on, we therefore consider $(j,n)\in \mathcal{D}_*$ with $j\geq1$. As in the proof of Lemma~\ref{lem:8}, we use the above estimate
\eqref{eqestimatebetak} to get that
\bqs
\Re(\varpi_k(\tau)) \, \leq \, \dfrac{\eta}{\alpha_k} \, +  \, \dfrac{\beta^*}{\alpha_k^{2 \, \mu_k+1}} \, \eta^{2 \, \mu_k}
\eqs
for each $\tau\in \Gamma^{in}_k \subset B_\epsilon(\tau_k)$. As a consequence, for all $(j,n) \in \mathcal{D}_*$ with $1 \leq j \leq n \, \underline{\delta}_k$,
we have
\begin{align*}
-n \, \eta \, + \, j \, \Re(\varpi_k(\tau)) \, &\leq \, n \, \eta \, \left( -1 +\frac{j}{n \, \alpha_k} \, + \,
\frac{j}{n} \, \dfrac{\beta^*}{\alpha_k^{2 \, \mu_k+1}} \, \eta^{2 \, \mu_k-1}  \right) \\
&\leq \, - \, n \, \eta \, \left( \underbrace{1 - \frac{\underline{\delta}_k}{\alpha_k}}_{>0} - \, \dfrac{\beta^*}{\alpha_k^{2 \, \mu_k}} \, \eta^{2 \, \mu_k-1} \right) \\
&\leq \, - \, \frac{n \, \eta}{2} \, \left( 1 - \frac{\underline{\delta}_k}{\alpha_k} \right) \, ,
\end{align*}
since $\eta$ is chosen such that $0<\eta<\eta_\epsilon<\epsilon<\Lambda_1^*$. And we have obtained the estimate
\bqs
\left| \dfrac{1}{2 \, \mathbf{i} \, \pi} \, \int_{\Gamma^{in}_k} {\rm e}^{n \, \tau} \, \mathbf{G}_j(\tau) \, \md \tau \right| \, \leq \, C \, \exp\left( - \, \frac{n \, \eta}{2} \, \left( 1 - \frac{\underline{\delta}_k}{\alpha_k} \right) \right)\,.
\eqs

For the remaining cases $(j,n) \in \mathcal{D}_*$ and $n \, \overline{\delta}_k\leq j \leq n \, r$,  we use a different contour near the ball $B_\epsilon(\tau_k)$.
We refer to Figure~\ref{fig:contourMod} for an illustration. We introduce the contour
\bqs
\Gamma_{\epsilon_{k}}^k \, := \, \left\{ \Re(\tau)-\frac{\beta^*}{\alpha_k^{2 \, \mu_k}}
\Re(\tau)^{2 \, \mu_k} + \frac{\beta_*}{\alpha_k^{2 \, \mu_k}} \left( \Im(\tau)-\theta_k \right)^{2 \, \mu_k} = \Psi_k \left(\epsilon_k \right)~|~
-\eta \leq \Re(\tau)\leq \epsilon_k \right\} \, ,
\eqs
with $\Psi_k\left(\epsilon_k\right):=\epsilon_k-\frac{\beta^*}{\alpha_k^{2 \, \mu_k}}\epsilon_k^{2 \, \mu_k}$ and where $0<\epsilon_k<\epsilon$ is chosen such
that $\Gamma_{\epsilon_k}^k$ intersects $\Gamma_k^{in}$ precisely on the boundary of $B_\epsilon(\tau_k)$. We note that there exists some constant
$c_k>0$ such that for any $\tau\in \Gamma_{\epsilon_k}^k \subset B_\epsilon(\tau_k)$, one has
\bqs
\Re(\tau) \leq \epsilon_k \, - \, c_k \, (\Im(\tau)-\theta_k)^{2 \, \mu_k} \, ,
\eqs
which yields
\begin{align*}
n \, \Re(\tau) \, + \, j \, \Re(\varpi_k(\tau)) \, & \, \leq \, n \, \Re(\tau) \, + \, j \, \left( - \, \dfrac{\Re(\tau)}{\alpha_k} \, + \,
\dfrac{\beta^*}{\alpha_k^{2 \, \mu_k+1}} \, \Re(\tau)^{2 \, \mu_k} \, - \, \dfrac{\beta_*}{\alpha_k^{2 \, \mu_k+1}} \, \left(\Im(\tau)-\theta_k\right)^{2 \, \mu_k} \right) \\
& \, = \, n \, \Re(\tau) \, - \, \frac{j}{\alpha_k} \, \Psi(\epsilon_k) \\
& \, \leq \, - \, n \, c_k \, (\Im(\tau)-\theta_k)^{2 \, \mu_k} \, + \, \frac{n \, \epsilon_k}{\alpha_k} \,
\left( \alpha_k \, - \, \frac{j}{n} \, + \, \frac{j}{n} \, \frac{\beta^*}{\alpha_k^{2 \, \mu_k}} \, \epsilon_k^{2 \, \mu_k-1}\right) \\
& \, \leq \, - \, \frac{n \, \epsilon_k}{\alpha_k} \, \left( \underbrace{\overline{\delta}_k-\alpha_k}_{>0} \, - \, r \, \frac{\beta^*}{\alpha_k^{2 \, \mu_k}}
\, \epsilon_k^{2 \, \mu_k-1} \right)
\end{align*}
for each $\tau\in \Gamma_{\epsilon_k}^k$. Now, since $\overline{\delta}_k-\alpha_k>0$ and $0<\epsilon_k<\epsilon<\Lambda_2^*$, we have
\bqs
\left| \dfrac{1}{2 \, \mathbf{i} \, \pi} \, \int_{\Gamma^{in}_k} {\rm e}^{n \, \tau} \, \mathbf{G}_j(\tau) \, \md \tau \right| \, \leq \,
C \, \exp\left( \, - \, n \, \frac{\epsilon_k \, (\overline{\delta}_k-\alpha_k)}{2 \, \alpha_k} \right) \,,
\eqs
which gives the desired estimate in the region $(j,n) \in \mathcal{D}_*$ and $n \, \overline{\delta}_k\leq j \leq n \, r$.

Let finally comment on the case where $\mathcal{I}_k=\left\{ \nu_{k,1},\nu_{k,2} \right\}$ with $\alpha_{\nu_{k,1}}<0<\alpha_{\nu_{k,2}}$. This time,
Lemma~\ref{lemkgen} gives
\bqs
\forall \, \tau \in B_\epsilon(\tau_k) \, ,\quad \forall \, j \in \Z \, ,\quad \big| \, \mathbf{G}_j(\tau) \, \big| \, \le \, \begin{cases}
C \, \exp \big( j \, \Re(\varpi_{\nu_{k,1}}(\tau)) \big) \, ,& \text{\rm if $j \, \le \, 0$,} \\
C \, \exp \big( j \, \Re(\varpi_{\nu_{k,2}}(\tau)) \big) \, ,& \text{\rm if $j \, \ge \, 1$.}
\end{cases}
\eqs
The analysis for $(j,n)\in \mathcal{D}_*$ and $j\geq 1$ is unchanged, and we apply the same strategy for $(j,n)\in \mathcal{D}_*$ and $j\leq 0$ without any
difficulty. As a conclusion, we have obtained that there exist $C>0$ and $\delta>0$ such that for each $k=1,\cdots,K$ we have
\bqs
\left| \dfrac{1}{2 \, \mathbf{i} \, \pi} \, \int_{\Gamma_k^{in}} {\rm e}^{n \, \tau} \, \mathbf{G}_j(\tau) \, \md \tau \right| \, \leq \, C \, {\rm e}^{-n \, \delta}
\, ,\quad \forall (j,n) \in \mathcal{D}_* \, ,
\eqs
which ends the proof.
\end{proof}

\begin{figure}[t!]
\centering
\includegraphics[width=.5\textwidth]{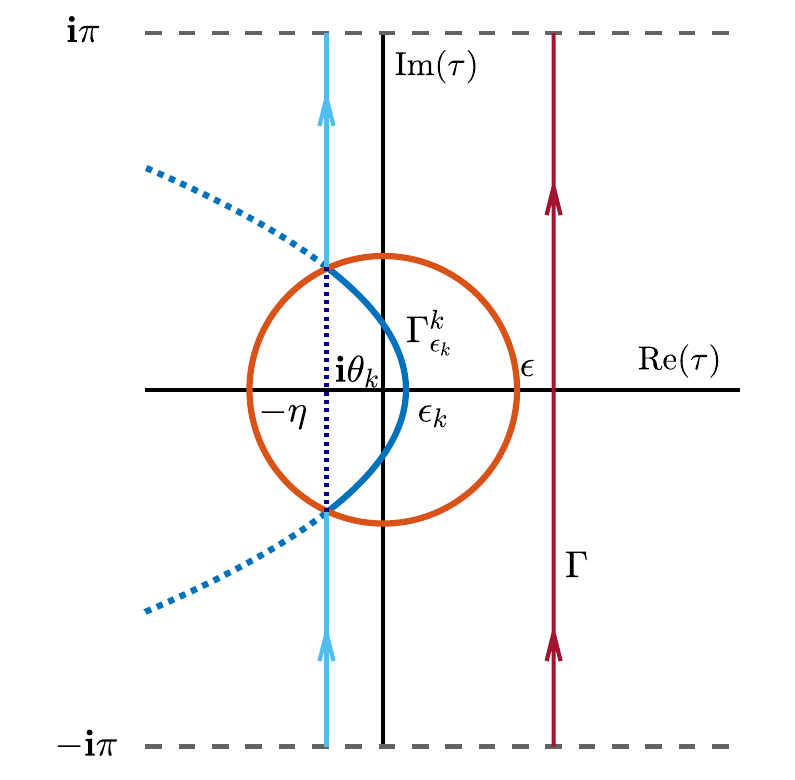}
  \caption{Illustration of the contour $\Gamma^k_{\epsilon_k}$ used in the proof of Lemma~\ref{lem:11} and Lemma~\ref{lem:12}.}
  \label{fig:contourMod}
\end{figure}

\noindent We prove in the next Lemma that we obtain generalized Gaussian estimates in each sector $\mathcal{D}_k$, $k=1,\cdots,K$.

\begin{lemma}
\label{lem:12}
There exists $\widehat{\epsilon}_{*} \in(0,\epsilon_*)$ such that for each $\epsilon\in(0,\widehat{\epsilon}_{*})$ there are constants $C>0$ and $c>0$
such that for any $k=1,\cdots,K$ and $(j,n)\in\mathcal{D}_k$, the following estimate holds:
\bqs
\big| \, \mathcal{G}^n_j \, \big| \, \le \, \frac{C}{n^{\frac{1}{2 \, \mu_k}}} \, \exp \left( - \, c \,
\left( \frac{|j \, - \, \alpha_k \, n|}{n^{\frac{1}{2 \, \mu_k}}} \right)^{\frac{2 \, \mu_k}{2 \, \mu_k-1}} \right) \, .
\eqs
\end{lemma}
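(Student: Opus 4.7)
The plan is to adapt the saddle-point argument that proved Lemma~\ref{lem:9} to the multi-tangency setting, by placing the pass curve only near the tangency point $\underline{\tau}_k$ and treating every other tangency point as a contribution that decays exponentially in $n$ thanks to the uniform gap between $j/n$ and $\alpha_{k'}$ for $k' \neq k$.

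Fix $k$ and $(j,n)\in \mathcal{D}_k$. In the representation \eqref{lapgreentau} I use a closed contour $\Gamma$ built as follows. Near $\underline{\tau}_k = \mathbf{i}\, \theta_k$, I place a translated copy of the pass curve \eqref{contourGp}, with parameters $\zeta_k := (j-\alpha_k n)/(2\mu_k n)$, $\gamma_k := (j/n)\,\beta_k/\alpha_k^{2\mu_k}$ and $\tau_{p,k}$ chosen exactly as in the proof of Lemma~\ref{lem:9}; call this piece $\Gamma_{p,k}$. Outside every ball $B_\varepsilon(\underline{\tau}_{k'})$ the contour sits on the segment $\Re(\tau)=-\eta$. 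Inside each other ball $B_\varepsilon(\underline{\tau}_{k'})$, $k' \neq k$, I either keep the straight segment (when $j$ is moderate) or use the modified contour $\widetilde{\Gamma}_{k'}$ of Lemma~\ref{lem:11} (when $j$ is large enough that the factor $\mathbf{G}_\tau(j)$ must be fought via its purely exponential bound), the pieces being joined by short arcs on $\partial B_\varepsilon(\underline{\tau}_{k'})$ whose contribution is an innocuous $O(\mathrm{e}^{-n\eta})$.

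The contribution along $\Gamma_{p,k}$ is treated verbatim as in the proof of Lemma~\ref{lem:9}. The local expansion of $\varpi_k$ at $\underline{\tau}_k$ (supplied by Lemma~\ref{lem:3mod} with index $k$) has exactly the same shape with parameters $(\alpha_k,\beta_k,\mu_k)$, so the three-case discussion on the sign and size of $\rho(\zeta_k/\gamma_k)$ delivers
\begin{equation*}
\left| \dfrac{1}{2\, \pi \, \mathbf{i}} \, \int_{\Gamma_{p,k}} \mathrm{e}^{n\, \tau}\, \mathbf{G}_\tau(j) \, \md \tau \right|
\, \le \, \dfrac{C}{n^{1/(2\, \mu_k)}} \, \exp\!\left( - \, \dfrac{1}{M} \, \left(\dfrac{|j - \alpha_k \, n|}{n^{1/(2\, \mu_k)}}\right)^{2\, \mu_k/(2\, \mu_k-1)}\right) \, ,
\end{equation*}
and the purely exponential remainders arising when $|\rho(\zeta_k/\gamma_k)|$ is large are absorbed into this Gaussian by the same ``large $M$'' trick used at the end of the proof of Lemma~\ref{lem:9}.

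The main obstacle is to show that every contribution near a different tangency point $\underline{\tau}_{k'}$ ($k' \neq k$) is at worst $O(\mathrm{e}^{-c\, n})$ with $c>0$ uniform on $\mathcal{D}_k$, so that it can be subsumed into the Gaussian above. The key fact is that, by the very definition of $\mathcal{D}_k$, there is a uniform separation $\mathrm{dist}(j/n,\alpha_{k'}) \ge \delta_{k,k'}>0$ whenever $k'\neq k$. On the piece $\Gamma^{in}_{k'}$ of the line $\Re(\tau)=-\eta$ sitting inside $B_\varepsilon(\underline{\tau}_{k'})$, the splitting $\Re(\varpi_{k'})<0$ versus $\Re(\varpi_{k'}) \ge 0$ used in the proof of Lemma~\ref{lem:11} gives
\begin{equation*}
- \, n\, \eta \, + \, j\, \Re(\varpi_{k'}(\tau)) \, \le \, n\, \eta \, \left( - \, 1 \, + \, \dfrac{j}{n\, \alpha_{k'}} \, + \, \tilde{c}_{k'}\, \eta^{1/(2\, \mu_{k'})} \right) \, ,
\end{equation*}
and the separation from $\alpha_{k'}$ forces the right-hand side to be bounded above by $-c\, n$ for some uniform $c>0$, provided $\eta$ is chosen small enough (independently of $k'$). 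When $j$ is too large for this estimate to close (case $|j|\gtrsim n\, L$), one locally switches to $\widetilde{\Gamma}_{k'}$ of Lemma~\ref{lem:11}, on which the pointwise bound $|\mathbf{G}_\tau(j)|\le C\mathrm{e}^{-c\, |j|}$ from Corollary~\ref{cor1} combines with $\Re(\tau)\le \tau_m = c|j|/(2n)$ to yield a contribution $\lesssim \mathrm{e}^{-c\, |j|/2}$. In either case the contribution is folded into the Gaussian as in the last step of the proof of Lemma~\ref{lem:9}, and summing the finitely many pieces completes the proof.
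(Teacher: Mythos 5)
Your overall architecture (local pass curve $\Gamma_{p,k}$ at $\underline{\tau}_k$ handled verbatim as in Lemma~\ref{lem:9}, straight segment $\Re(\tau)=-\eta$ elsewhere, exponential remainders absorbed into the Gaussian by the large-$M$ trick) is the same as the paper's, but there is a genuine gap in how you dispose of the other tangency points $\underline{\tau}_{k'}$, $k'\neq k$. Your central claim there is that the uniform separation $|j/n-\alpha_{k'}|\ge \delta_{k,k'}$ forces
\begin{equation*}
-\,n\,\eta\,+\,j\,\Re(\varpi_{k'}(\tau)) \,\le\, n\,\eta\left(-1+\frac{j}{n\,\alpha_{k'}}+\tilde{c}_{k'}\,\eta^{1/(2\mu_{k'})}\right)\,\le\,-\,c\,n
\end{equation*}
on $\Gamma^{in}_{k'}$ once $\eta$ is small. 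This is false precisely in the generic case $0<\alpha_{k'}<\alpha_k$ (with, say, $\alpha_k>0$ so that $j\ge n\,\underline{\delta}_k>0$ on $\mathcal{D}_k$): then $j/(n\,\alpha_{k'})\ge \underline{\delta}_k/\alpha_{k'}>1$, so the bracket is bounded \emph{below} by a positive constant, and shrinking $\eta$ only scales the bound, it cannot change its sign. Separation of $j/n$ from $\alpha_{k'}$ helps only when it is separation from \emph{above}, i.e.\ when $\alpha_{k'}>\overline{\delta}_k$ (equivalently $\alpha_{k'}>\alpha_k$); when the other drift is slower than the observed speed $j/n$, the factor ${\rm e}^{\,j\,\Re(\varpi_{k'}(\tau))}$ genuinely beats ${\rm e}^{-n\eta}$ on the segment $\Re(\tau)=-\eta$ and the straight contour cannot be kept there.

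You do mention the correct rescue — deforming locally into the wedge $\widetilde{\Gamma}_{k'}$ with apex at $\tau_m=\tfrac{c}{2}\tfrac{j}{n}>0$, which lies outside $B_\varepsilon(\underline{\tau}_{k'})$ so that the bound $|\mathbf{G}_\tau(j)|\le C{\rm e}^{-c|j|}$ of Corollary~\ref{cor1} applies and ${\rm e}^{n\Re(\tau)-c\,j}\le {\rm e}^{-cj/2}$ — but you trigger it under the condition ``$|j|\gtrsim n\,L$'', which never occurs on $\mathcal{D}_k$ (there $|j|\le n\max(p,r)\le nL$; that threshold is relevant only for the implicit-case Lemma~\ref{lem:10}, not here). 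As written, your proof therefore applies the failing straight-segment estimate to every $k'\neq k$ and never activates the wedge. The paper's proof makes exactly the case split you are missing: for $\alpha_{k'}<0$ use directly $|\mathbf{G}_\tau(j)|\le C{\rm e}^{-cj}$ (case II bound, since $j\ge 1$); for $\alpha_{k'}>\alpha_k$ use your straight-segment estimate, which then closes because $j/n\le\overline{\delta}_k<\alpha_{k'}$; and for $0<\alpha_{k'}<\alpha_k$ switch to the wedge $\widetilde{\Gamma}_{k'}$. With that trichotomy (keyed to the sign and size of $\alpha_{k'}$ relative to $j/n$, not to $|j|$ versus $nL$), plus the routine remarks for $\mathcal{I}_{k'}$ or $\mathcal{I}_k$ of cardinality two, your argument becomes the paper's proof.
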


\begin{proof}
Let $(j,n)\in\mathcal{D}_k$, that is $n\geq1$ and $n \, \underline{\delta}_k \leq j \leq n \, \overline{\delta}_k$. Assume without loss of generality
that $\alpha_k>0$. We first consider the case where $\mathcal{I}_k=\{k\}$. Once again, we introduce a family of parametrized curves $\Gamma_p^k$
given by
\bqs
\Gamma_p^k \, := \, \left\{ \Re(\tau)-\frac{\beta^*}{\alpha_k^{2 \, \mu_k}} \, \Re(\tau)^{2 \, \mu_k}
+ \frac{\beta_*}{\alpha_k^{2 \, \mu_k}} \, \left(\Im(\tau)-\theta_k\right)^{2 \, \mu_k} = \Psi_k \left(\tau_{p,k}\right)~|~ -\eta \leq \Re(\tau)\leq \tau_{p,k} \right\}
\eqs
with $\Psi_k\left(\tau_{p,k}\right)=\tau_{p,k}-\frac{\beta^*}{\alpha_k^{2 \, \mu_k}} \, \tau_{p,k}^{2 \, \mu_k}$ and $\eta>0$, $\tau_{p,k}>-\eta$ are chosen
as follows. For each $\epsilon\in(0,\epsilon_*)$ we fix $\eta\in(0,\eta_\epsilon)$ such that the curve $\Gamma_p^k$ with $\tau_{p,k}=0$ intersects the
ray $\left\{-\eta+\mathbf{i} \, \ell ~|~ \ell \in[-\pi,\pi]\right\}$ inside the ball $B_\epsilon(\tau_k)$. Furthermore, we let $0<\epsilon_{0,k}<\epsilon$ be defined
as the value of $\tau_{p,k}$ for which $\Gamma_p^k$ with $\tau_{p,k}=\epsilon_{0,k}$ intersects the ray $\left\{-\eta+\mathbf{i} \, \ell ~|~ \ell \in[-\pi,\pi]
\right\}$ on $\partial B_\epsilon(\tau_k)$ with $\eta$ fixed previously. Finally, $\tau_{p,k}$ is now defined as
\bqs
\tau_{p,k} \, := \, \left\{
\begin{split}
\rho_k\left(\frac{\zeta_k}{\gamma_k}\right) &\quad \text{ if } \quad  -\frac{\eta}{2}\leq \rho_k\left(\frac{\zeta_k}{\gamma_k}\right) \leq \epsilon_{0,k}\,,\\
\epsilon_{0,k} &\quad \text{ if } \quad \rho_k\left(\frac{\zeta_k}{\gamma_k}\right) > \epsilon_{0,k} \, , \\
-\frac{\eta}{2} &\quad \text{ if } \quad \rho_k\left(\frac{\zeta_k}{\gamma_k}\right) <- \frac{\eta}{2} \, ,
\end{split}
\right.
\eqs
where $\zeta_k$,  $\gamma_k$ and the function $\rho_k$ are set to
\bqs
\zeta_k \, := \, \dfrac{j \, - \, n \, \alpha_k}{2 \, \mu_k \, n} \, ,\quad \text{ and } \quad
\gamma_k \, := \, \dfrac{j}{n} \, \dfrac{\beta^*}{\alpha_k^{2 \, \mu_k}} \, > \, 0 \, ,
\eqs
with $\rho_k\left(\frac{\zeta_k}{\gamma_k}\right)$  given by
\bqs
\rho_k \left( \dfrac{\zeta_k}{\gamma_k} \right) \, := \, \mathrm{sgn} \left(\frac{\zeta_k}{\gamma_k}\right) \,
\left( \dfrac{|\zeta_k|}{\gamma_k} \right)^{\frac{1}{2 \, \mu_k-1}} \, .
\eqs
In our estimate, we use a contour $\Gamma_k$ which consists of $\Gamma_p^k$ in $B_\epsilon(\tau_k)$ and the ray $\left\{-\eta+\mathbf{i} \, \ell ~|~
\ell \in[-\pi,\pi]\right\}$ otherwise (see Figure~\ref{fig:contourball}, left panel for an illustration in the case $K=3$). Depending on the ratio
$\frac{\zeta_k}{\gamma_k}$, there exists (or not) a portion of the ray $\left\{-\eta+\mathbf{i} \, \ell ~|~ \ell \in[-\pi,\pi]\right\}$ within the ball
$B_\epsilon(\tau_k)$ that we denote $\Gamma^{in}_k$. Note that when $\rho_k\left(\frac{\zeta_k}{\gamma_k}\right) > \epsilon_{0,k}$ we
have $\Gamma^{in}_k=\emptyset$. The analysis along $\Gamma_p^k\cup\Gamma^{in}_k$ is exactly the same as the one conducted in
the proof of Lemma~\ref{lem:9} and we get that there exists $\epsilon_{**}\in(0,\epsilon_*)$ such that for all $\epsilon\in(0,\epsilon_{**})$
one has
\bqs
\left| \dfrac{1}{2 \, \mathbf{i} \, \pi} \, \int_{\Gamma_p^k\cup\Gamma^{in}_k} {\rm e}^{n \, \tau} \, \mathbf{G}_j(\tau) \, \md \tau \right| \leq
\frac{C}{n^{\frac{1}{2 \, \mu_k}}} \, \exp \left( -c \, \left( \dfrac{|j \, - \, \alpha_k \, n|}{n^{\frac{1}{2 \, \mu_k}}} \right)^{\frac{2 \, \mu_k}{2 \, \mu_k-1}}
\right) \, ,\quad (j,n) \in \mathcal{D}_k \, .
\eqs
The fact that one needs to eventually decrease the size of $\epsilon$ comes from Lemma~\ref{lemFImp} which is needed to obtain the generalized
Gaussian bound and prove that
\bqs
\int_{\Gamma_p^k} {\rm e}^{-n c_k^*\left( \Im(\tau)-\theta_k\right)^{2 \, \mu_k}} \, |\md \tau|  \, \leq \, \frac{C}{n^{\frac{1}{2 \, \mu_k}}}\, .
\eqs
Along the ray $\left\{ -\eta + \mathbf{i} \, \ell ~|~ \ell \in[-\pi,\pi]\right\}$, we denote by $\Gamma^{out}$ all portions that lie outside the balls
$B_\epsilon (\tau_\upsilon)$ with $\upsilon \neq k$, and we  get
\bqs
\left| \dfrac{1}{2 \, \mathbf{i} \, \pi} \, \int_{\Gamma^{out}} {\rm e}^{n \, \tau} \, \mathbf{G}_j(\tau) \, \md \tau \right| \, \leq \,
{\rm e}^{- \, n \, \eta \, - \, c \, |j|} \, ,\quad (j,n)\in \mathcal{D}_k \, .
\eqs
Thus, it only remains to estimate the contour integral along the ray $\left\{-\eta+\mathbf{i} \, \ell ~|~ \ell \in[-\pi,\pi]\right\}$ within a ball $B_\epsilon
(\tau_\upsilon)$ with $\upsilon \neq k$, that we denote $\Gamma^{in}_\upsilon$. Let assume first that $\mathcal{I}_\upsilon=\{\upsilon\}$. We split
the analysis in two cases.

\begin{itemize}
\item[(i)] If $\alpha_\upsilon<0$, then we have the estimate
\bqs
\left| \, \mathbf{G}_j(\tau) \, \right| \, \leq \, C \, {\rm e}^{- \, c \, j} \, , \quad \tau \in \Gamma^{in}_\upsilon \, ,
\eqs
as $0 < n \, \underline{\delta}_k \leq j \leq n \, \overline{\delta}_k$, and we obtain
\bqs
\left| \dfrac{1}{2 \, \mathbf{i} \, \pi} \, \int_{\Gamma^{in}_\upsilon} {\rm e}^{n \, \tau} \, \mathbf{G}_j(\tau) \, \md \tau \right| \, \leq \,
C \, {\rm e}^{- \, n \, \eta \, - \, c \, j} \, ,\quad (j,n) \in \mathcal{D}_k \, .
\eqs
\item[(ii)] If $\alpha_\upsilon>0$, then we have the following estimates for each $\tau\in \Gamma^{in}_\upsilon \subset B_\epsilon(\tau_\upsilon)$
\bqs
\left| \, \mathbf{G}_j(\tau) \, \right| \, \leq \, C \, {\rm e}^{\, j \, \Re \left(\varpi_\upsilon(\tau)\right)} \, ,\quad \tau \in \Gamma^{in}_\upsilon \, ,
\eqs
and
\bqs
\Re(\varpi_\upsilon(\tau)) \, \leq \, - \, \dfrac{\Re(\tau)}{\alpha_\upsilon} \, + \, \dfrac{\beta^*}{\alpha_\upsilon^{2 \, \mu_\upsilon+1}} \, \Re(\tau)^{2 \, \mu_\upsilon}
\, - \, \dfrac{\beta_*}{\alpha_\upsilon^{2 \, \mu_\upsilon+1}} \, \left(\Im(\tau)-\theta_\upsilon\right)^{2 \, \mu_\upsilon} \, .
\eqs
 As a consequence, we readily obtain that
\bqs
- \, n \, \eta \, + \, j \, \Re(\varpi_\upsilon(\tau)) \leq n \, \eta \, \left( - \, 1 \, + \, \frac{j}{n \, \alpha_\upsilon}
\, + \, \frac{j}{n} \, \dfrac{\beta^*}{\alpha_\upsilon^{2 \, \mu_\upsilon+1}} \, \eta^{2 \, \mu_\upsilon-1} \right) \, .
\eqs
Thus if $\alpha_\upsilon>\alpha_k$, we get that
\bqs
- \, n \, \eta \, + \, j \, \Re(\varpi_\upsilon(\tau)) \, \leq \, - \, n \, \eta \, \left( \underbrace{1 \, - \, \dfrac{\overline{\delta}_k}{\alpha_\upsilon}}_{>0} \, - \,
\dfrac{ \overline{\delta}_k \, \beta^*}{\alpha_\upsilon^{2 \, \mu_\upsilon+1}} \, \eta^{2 \, \mu_\upsilon-1}  \right) \, \leq \,
- \, \frac{ \, n \, \eta}{2} \, \left( 1 \, - \, \dfrac{\overline{\delta}_k}{\alpha_\upsilon}\right),
\eqs
provided that $\eta$ is chosen small enough, which is always possible by eventually reducing the size of $\epsilon_{**}$. This gives
\bqs
\left| \dfrac{1}{2 \, \mathbf{i} \, \pi} \, \int_{\Gamma^{in}_\upsilon} {\rm e}^{n \, \tau} \, \mathbf{G}_j(\tau) \, \md \tau \right| \, \leq \, C \,
{\rm e}^{- \, n \, \frac{\eta}{2} \, \left( 1-\frac{\overline{\delta}_k}{\alpha_\upsilon} \right)} \, .
\eqs

Finally, if $\alpha_\upsilon<\alpha_k$ we use a different contour inside the ball $B_\epsilon(\tau_\upsilon)$. Namely, we use the contour
\bqs
\Gamma_{\epsilon_{\upsilon}}^\upsilon \, := \, \left\{ \Re(\tau)-\frac{\beta^*}{\alpha_\upsilon^{2 \, \mu_\upsilon}} \,
\Re(\tau)^{2 \, \mu_\upsilon} + \frac{\beta_*}{\alpha_\upsilon^{2 \, \mu_\upsilon}} \, \left(\Im(\tau)-\theta_\upsilon\right)^{2 \, \mu_\upsilon}
\, = \, \Psi_\upsilon\left(\epsilon_\upsilon \right)~|~ -\eta \leq \Re(\tau)\leq \epsilon_\upsilon \right\} \, ,
\eqs
with $\Psi_\upsilon\left(\epsilon_\upsilon\right) := \epsilon_\upsilon-\frac{\beta^*}{\alpha_\upsilon^{2 \, \mu_\upsilon}} \, \epsilon_\upsilon^{2 \, \mu_\upsilon}$
and where $0<\epsilon_\upsilon<\epsilon$ is chosen such that $\Gamma_{\epsilon_\upsilon}^\upsilon$ intersects the segment $\Gamma_\upsilon^{in}$
precisely on the boundary of $B_\epsilon(\tau_\upsilon)$. We note that there exists some constant $c_\upsilon>0$ such that for any $\tau \in
\Gamma_{\epsilon_\upsilon}^\upsilon \subset B_\epsilon(\tau_\upsilon)$, one has
\bqs
\Re(\tau) \, \leq \, \epsilon_\upsilon \, - \, c_\upsilon \, (\Im(\tau)-\theta_\upsilon)^{2 \, \mu_\upsilon} \, ,
\eqs
which yields
$$
n \, \Re(\tau) \, + \, j \, \Re(\varpi_\upsilon(\tau)) \, \leq \, - \, n \, c_\upsilon \, (\Im(\tau)-\theta_\upsilon)^{2 \, \mu_\upsilon}
\, - \, \frac{n \, \epsilon_\upsilon}{\alpha_\upsilon} \, \left(
\underline{\delta}_k \, - \, \alpha_\upsilon \, - \, \overline{\delta}_k \, \frac{\beta^*}{\alpha_\upsilon^{2 \, \mu_\upsilon}}
\, \epsilon_\upsilon^{2 \, \mu_\upsilon-1} \right) \, ,
$$
for each $\tau \in \Gamma_{\epsilon_\upsilon}^\upsilon$. Now, since $\underline{\delta}_k-\alpha_\upsilon>\alpha_k-\alpha_\upsilon>0$ and
$0<\epsilon_\upsilon<\epsilon$, we can always further reduce the size of $\epsilon_{**}$ such that
\bqs
\left| \dfrac{1}{2 \, \mathbf{i} \, \pi} \, \int_{\Gamma^{in}_\upsilon} {\rm e}^{n \, \tau} \, \mathbf{G}_j(\tau) \, \md \tau \right| \, \leq \,
\dfrac{C}{n^{\frac{1}{2 \, \mu_\upsilon}}} \, \exp \left( \, - \, n \, \frac{\epsilon_\upsilon \, (\underline{\delta}_k-\alpha_\upsilon)}{2 \, \alpha_\upsilon}\right) \, ,
\eqs
which gives the desired estimate.
\end{itemize}

If now $\mathcal{I}_\upsilon = \{ \nu_{\upsilon,1},\nu_{\upsilon,2} \}$, then we have $\alpha_{\nu_{\upsilon,1}} < 0 < \alpha_{\nu_{\upsilon,2}}$ and for
$0 < n \, \underline{\delta}_k \leq j \leq n \, \overline{\delta}_k$, we get
\bqs
\left| \, \mathbf{G}_j(\tau) \, \right| \, \leq \, C \, \exp \big( j \, \Re \left(\varpi_{\nu_{\upsilon,2}}(\tau)\right) \big) \, ,\quad \tau \in \Gamma^{in}_\nu \, ,
\eqs
such that the analysis is similar to the above case (ii). Finally, when $\mathcal{I}_k=\{ \nu_{k,1},\nu_{k,2}\}$, we necessarily have that $\alpha_{\nu_{k,1}}
< 0 < \alpha_{\nu_{k,2}}=\alpha_k$ and the analysis remains unchanged. As there exists some $\widehat{\epsilon}_*\in(0,\epsilon_{**})$ such that for all
$\epsilon\in(0,\widehat{\epsilon}_*)$ we have proved the desired generalized Gaussian bound.
\end{proof}

\paragraph{Case B.} In that case, two or more $\alpha_k$ are equal. Note that for $(j,n)\in\mathcal{D}_*$ the analysis remains unchanged and
Lemma~\ref{lem:11} still holds true in that case. Let us assume for simplicity that $\alpha_{\nu_1}=\alpha_{\nu_2}$ for some couple of integers
$\nu_1 \neq \nu_2$, and all other $\alpha_k$'s are distinct. The estimate from Lemma~\ref{lem:12} is still valid for $(j,n)\in\mathcal{D}_k$ for each
$k \not \in \{ \nu_1,\nu_2 \}$. For $(j,n)\in\mathcal{D}_{\nu_1}=\mathcal{D}_{\nu_2}$, in the ball $B_\epsilon(\tau_{\nu_1})$ we use the contour
$\Gamma_p^{\nu_1}$ and in the ball $B_\epsilon(\tau_{\nu_2})$ we use the contour $\Gamma_p^{\nu_2}$. And we refer to Figure~\ref{fig:contourball}
for an illustration of such contours. Reproducing the analysis of Lemma~\ref{lem:12}, we obtain the existence of $C>0$ and $c>0$ such that for
$(j,n) \in \mathcal{D}_{\nu_1}=\mathcal{D}_{\nu_2}$ the following estimate holds:
\bqs
\big| \, \mathcal{G}^n_j \, \big| \, \le \, C \, \sum_{\nu \in \{ \nu_1,\nu_2\}} \, \dfrac{1}{n^{\frac{1}{2 \, \mu_{\nu}}}} \, \exp \left( - \, c \,
\left( \dfrac{|j \, - \, \alpha_\nu \, n|}{n^{\frac{1}{2 \, \mu_{\nu}}}} \right)^{\frac{2 \, \mu_{\nu}}{2 \, \mu_{\nu}-1}} \right) \, .
\eqs

Finally, we remark that Lemma~\ref{lem:10} naturally extends to the case $K>1$ in the implicit setting. This concludes the proof of Theorem~\ref{thm:1}.

\section{Examples and extensions}
\label{section4}

We first give several examples of operators \eqref{operateurs} that fit into the framework of Theorem \ref{thm:1}, and that arise when discretizing
the transport equation:
\begin{equation}
\label{transport}
\partial_t u \, + \, \partial_x u \, = \, 0 \, ,\quad (t,x) \in \R^+ \times \R \, ,
\end{equation}
with Cauchy data at $t=0$. We refer to \cite{gko,Gustafsson-livre} for a detailed analysis and more examples of finite difference schemes in that context.

\subsection{Example 1: the Lax-Friedrichs scheme}

The Lax-Friedrichs scheme is an explicit finite difference approximation of \eqref{transport}, which corresponds to the operators:
\begin{equation}
\label{laxfriedrichs}
Q_1 \, := \, I \, ,\quad Q_0 \, := \, \dfrac{1\, + \, \lambda}{2} \, \, {\bf S}^{-1} \, + \, \dfrac{1\, - \, \lambda}{2} \, \, {\bf S} \, ,
\end{equation}
where here and below, $\lambda$ is a real parameter\footnote{In the theory of finite difference schemes, it is referred to as the
Courant-Friedrichs-Lewy parameter \cite{cfl}.} and ${\bf S}$ still denotes the shift operator defined by:
$$
{\bf S}  \quad : \quad \big( u_j \big)_{j \in \Z} \, \longmapsto \, \big( u_{j+1} \big)_{j \in \Z} \, .
$$
We now restrict to $\lambda \in (0,1)$ so that both coefficients in the definition \eqref{laxfriedrichs} are positive and they sum to $1$. In probability theory,
this corresponds to a random walk with probability $(1+\lambda)/2$ to jump of $+1$ and probability $(1-\lambda)/2$ to jump of $-1$ at each time iteration
(recall our convention on the coefficients $a_\ell$ which differs from the standard convolution product).

In the notation of \eqref{operateurs}, we have $r=p=1$. Since we are dealing here with an explicit scheme, Assumptions \ref{hyp:0} and \ref{hyp:3} are
trivially satisfied. The definition \eqref{defF} reduces here to:
$$
F \big( \, {\rm e}^{\, \mathbf{i} \, \xi} \, \big) \, = \, \cos (\xi )\, - \, \mathbf{i} \, \lambda \, \sin (\xi )\, .
$$
Computing:
$$
\Big| \, F \big( \, {\rm e}^{\, \mathbf{i} \, \xi} \, \big) \, \Big|^2 \, = \, \cos^2 (\xi) \, + \, \lambda^2 \, \sin^2( \xi )\, ,
$$
we find that $F(\kappa)$ belongs to $\Dbar$ for all $\kappa \in \cercle$, and $F(\kappa)$ belongs to $\cercle$ for such $\kappa$ if and
only if $\kappa = \pm 1$. We thus have \eqref{hyp:stabilite1} with $\underline{\kappa}_1 := 1$ and $\underline{\kappa}_2 := -1$, and
the reader can check that \eqref{hyp:stabilite2} is satisfied with:
$$
\alpha_1 \, = \, \alpha_2 \, = \, \lambda \, ,\quad \beta_1 \, = \, \beta_2 \, = \, \dfrac{1 \, - \, \lambda^2}{2} \, , \quad \mu_1  =  \mu_2 = 1\, ,
$$
which means that Assumption \ref{hyp:1} is satisfied. Since the modulus of $F(\kappa)$ attains its maximum at two points of $\cercle$, we cannot
apply the uniform Gaussian bound from \cite{Diaconis-SaloffCoste}. The improvements of Theorem~\ref{thm:1} and \cite[Theorem 1.8]{RSC2} are
relevant in that case. Note that \cite[Theorem 1.8]{RSC2} can be used for the Lax-Friedrichs scheme \eqref{laxfriedrichs} since we are in the case
where $\alpha_1 = \alpha_2$, $\beta_1 = \beta_2$ and $\mu_1  = \mu_2$. The spectral curve $F(\mathbb{S}^1)$ (an ellipse) is illustrated in
Figure~\ref{fig:LF} in the case $\lambda=1/2$.

We now turn to Assumption \ref{hyp:2} and compute (see the general definition \eqref{defAl}):
$$
\A_{-1} (z) \, = \, - \, \dfrac{1\, + \, \lambda}{2} \, ,\quad \A_1 (z) \, = \, - \, \dfrac{1\, - \, \lambda}{2} \, .
$$
Hence Assumption \ref{hyp:2} is satisfied too.

At last, Assumption \ref{hyp:4} is satisfied since we have here $K=2$, $\underline{z}_1=1$ and $\underline{z}_2=-1 \neq \underline{z}_1$,
which means that both sets $\mathcal{I}_1$ and $\mathcal{I}_2$ in \eqref{defIk} are singletons. Overall, the conclusion of Theorem \ref{thm:1}
for the Lax-Friedrichs scheme in \eqref{laxfriedrichs} is the uniform bound\footnote{We do not use the absolute value here since all coefficients
$\mathcal{G}^n_j$ are nonnegative.}:
$$
\mathcal{G}^n_j \, \le \, \dfrac{C}{\sqrt{n}} \, \exp \left( - \, c \, \, \dfrac{(j \, - \, \lambda \, n)^2}{n} \right) \, .
$$
This behavior is illustrated in Figure \ref{fig:LF} in the case $\lambda=1/2$.

\begin{figure}[t!]
\centering
\includegraphics[width=.39\textwidth]{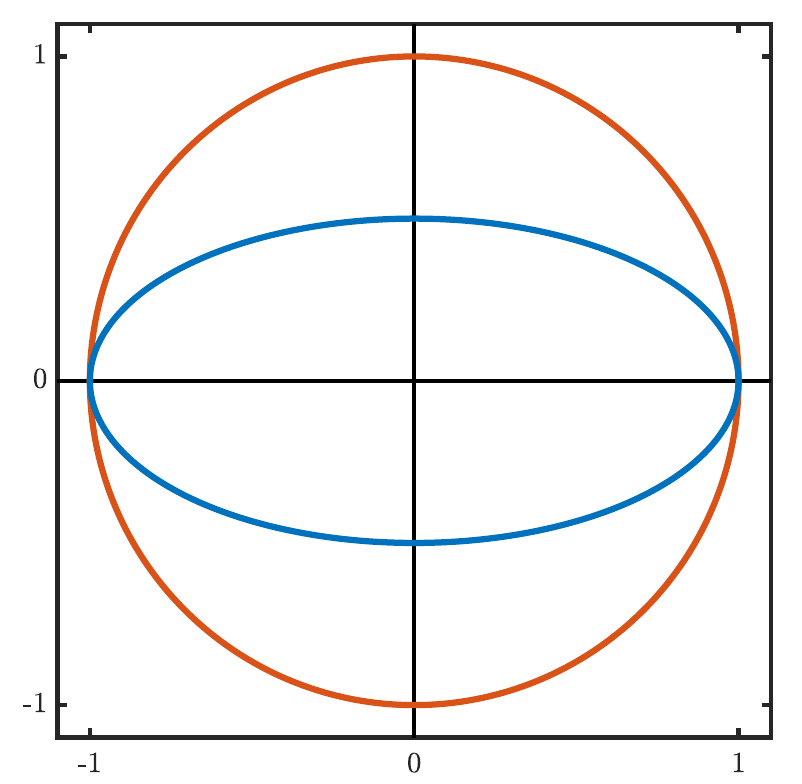}\hspace{0.5cm}
\includegraphics[width=.5\textwidth]{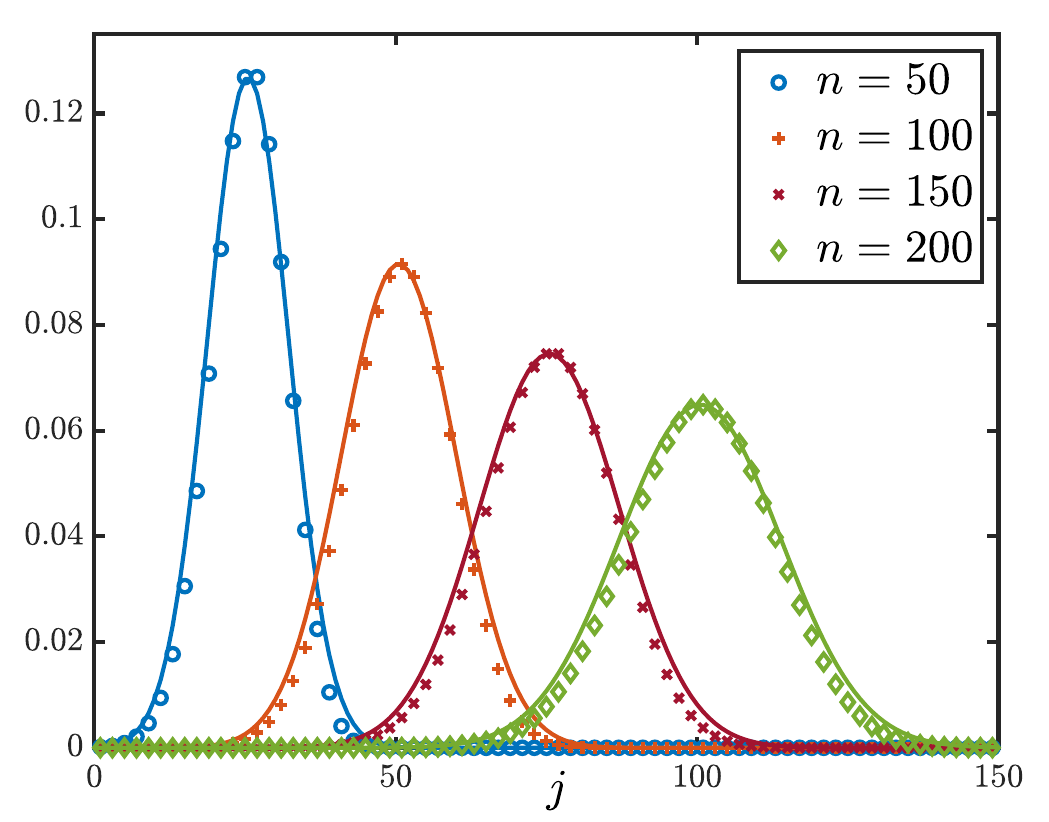}
  \caption{Left: Spectrum (blue curve) $\sigma(\mathcal{L})=F(\mathbb{S}^1)$ for the Lax-Friedrichs scheme \eqref{laxfriedrichs} with $\lambda=1/2$.
  Right: The Green's function (marked points) at different time iterations for the Lax-Friedrichs scheme \eqref{laxfriedrichs} compared with a fixed
  Gaussian profile centered at $j=\lambda n$ (solid lines). We started with an initial condition given by the Dirac mass $\boldsymbol{\delta}$.}
  \label{fig:LF}
\end{figure}

For readers who are familiar with the theory of the transport equation (see \cite{gko} otherwise), the parameter $\lambda$ stands for the ratio
$\Delta t/\Delta x$ of the time and space steps. Hence the bound of Theorem \ref{thm:1} equivalently reads (with new constants that are still
denoted $C$ and $c$):
$$
\mathcal{G}^n_j \, \le \, \dfrac{C}{\sqrt{n}} \, \exp \left( - \, c \, \, \dfrac{(j \, \Delta x - \, n \, \Delta t)^2}{\Delta x \, (n \, \Delta t)} \right) \, ,
$$
which corresponds to the heat kernel at point $j \, \Delta x$, time $n \, \Delta t$ with a diffusion coefficient proportional to $\Delta x$.

\subsection{Example 2: an implicit scheme}

Our second example is based on the so-called method of lines for discretizing \eqref{transport} (see \cite{gko,Gustafsson-livre} for a detailed
exposition of the method and its outcome). Here we first apply the centered finite difference for the spatial derivative and we then apply the
implicit Euler scheme for the time integration. As in the case of the Lax-Friedrichs scheme \eqref{laxfriedrichs}, we introduce a positive
parameter $\lambda>0$ (which plays the role of the ratio $\Delta t/\Delta x$ but its origin is meaningless here), and we use the operators:
\begin{equation}
\label{implicit}
Q_1 \, := \, I \, + \, \dfrac{\lambda}{2} \, \big( \, {\bf S} \, - {\bf S}^{-1} \, \big) \, ,\quad Q_0 \, := \, I \, .
\end{equation}
In the notation of \eqref{operateurs}, this corresponds again to $r=p=1$, but the scheme is now implicit because $Q_1$ is not the identity.
We compute:
$$
\widehat{Q}_1 \big( \, {\rm e}^{\, \mathbf{i} \, \xi} \, \big) \, = \, 1 \, + \, \mathbf{i} \, \lambda \, \sin (\xi )\neq 0 \, ,
$$
which means that $Q_1$ is an isomorphism on $\ell^2(\Z;\C)$. The index condition \eqref{indice} is also satisfied since the complex number
$\widehat{Q}_1(\kappa)$ has positive real part for all $\kappa \in \cercle$ so we can write:
$$
\widehat{Q}_1(\kappa) \, = \, \exp q(\kappa) \, ,
$$
thanks to the standard determination of the logarithm (which implies the validity of \eqref{indice}). The operator $\mathcal{L}$ is given by:
$$
\mathcal{L} \, = \, \dfrac{1}{\sqrt{1+\lambda^2}} \, \left\{ \sum_{\ell \ge 0} \, x^\ell \, {\bf S}^{- \ell} \, +\sum_{\ell \ge 1} \, (-1)^\ell \, x^\ell \, {\bf S}^\ell
\, \right\} \, ,
$$
where $x \in (0,1)$ is given by:
$$
x \, := \, \dfrac{\sqrt{1+\lambda^2} \, - \, 1}{\lambda} \, .
$$
We are thus dealing with a convolution operator with infinite support.

We compute:
$$
F \big( \, {\rm e}^{\, \mathbf{i} \, \xi} \, \big) \, = \, \dfrac{1}{1 \, + \, \mathbf{i} \, \lambda \, \sin (\xi)} \, ,
$$
which means that $F(\kappa)$ belongs to $\Dbar$ for all $\kappa \in \cercle$ and, again, $F(\kappa)$ belongs to $\cercle$ if and only if
$\kappa =\pm 1$ ($K=2$ in the notation of Assumption \ref{hyp:1}). Setting $\underline{\kappa}_1=1$ and $\underline{\kappa}_2=-1$, we
find that the relation \eqref{hyp:stabilite2} is satisfied with:
$$
\alpha_1 \, = \, \lambda \, ,\quad \beta_1 \, = \, \beta_2 \, = \, \dfrac{\lambda^2}{2} \, ,\quad \alpha_2 \, = \, - \, \lambda \, .
$$
Assumption \ref{hyp:1} is thus satisfied but we now have $\underline{z}_1 =\underline{z}_2 =1$, and we immediately see that Assumption
\ref{hyp:4} is also satisfied: both sets $\mathcal{I}_1$ and $\mathcal{I}_2$ equal $\{ 1,2 \}$ and $\alpha_1 \, \alpha_2 =-\lambda^2<0$. The
spectral curve $F(\mathbb{S}^1)$ is illustrated in Figure~\ref{fig:Implicit} in the case $\lambda=1/2$.

As far as Assumption \ref{hyp:2} is concerned, we compute:
$$
\A_{-1} (z) \, = \, - \, \dfrac{\lambda}{2} \, z \, ,\quad \A_1 (z) \, = \, \dfrac{\lambda}{2} \, z \, ,
$$
so Assumption \ref{hyp:2} is satisfied again. We also note that Assumption \ref{hyp:3} is satisfied since we have $a_{-1,1}=-\lambda/2$ and
$a_{1,1}=\lambda/2$. We can therefore apply Theorem \ref{thm:1} which, in the case of \eqref{implicit}, yields the uniform Gaussian bound:
$$
\big| \, \mathcal{G}^n_j \, \big| \, \le \, \dfrac{C}{\sqrt{n}} \, \left( \exp \left( - \, c \, \, \dfrac{(j \, + \, \lambda \, n)^2}{n} \right)
\, + \, \exp \left( - \, c \, \, \dfrac{(j \, - \, \lambda \, n)^2}{n} \right) \right) \, ,\quad | \, j \, | \, \le \, L \, n \, .
$$
This behavior is illustrated in Figure \ref{fig:Implicit} in the case $\lambda=1/2$. Since we have an explicit formula for $\mathcal{G}_j^1$, it is
clear that the bound:
$$
\big| \, \mathcal{G}^1_j \, \big| \, \le \, C \, \exp (- \, c \, | \, j \, | ) \, ,
$$
for large $j$'s cannot be improved to some generalized Gaussian bound. This justifies why we need to distinguish the cases $|j|/n \gg 1$ and
$|j|/n =O(1)$ in \eqref{boundimp}.

\begin{figure}[t!]
\centering
\includegraphics[width=.39\textwidth]{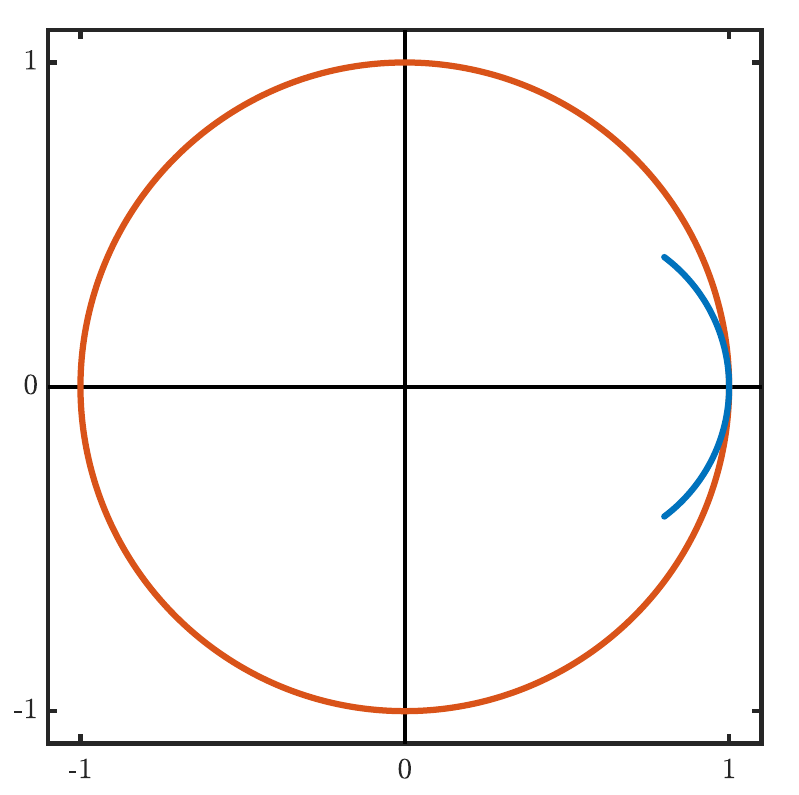}\hspace{0.5cm}
\includegraphics[width=.5\textwidth]{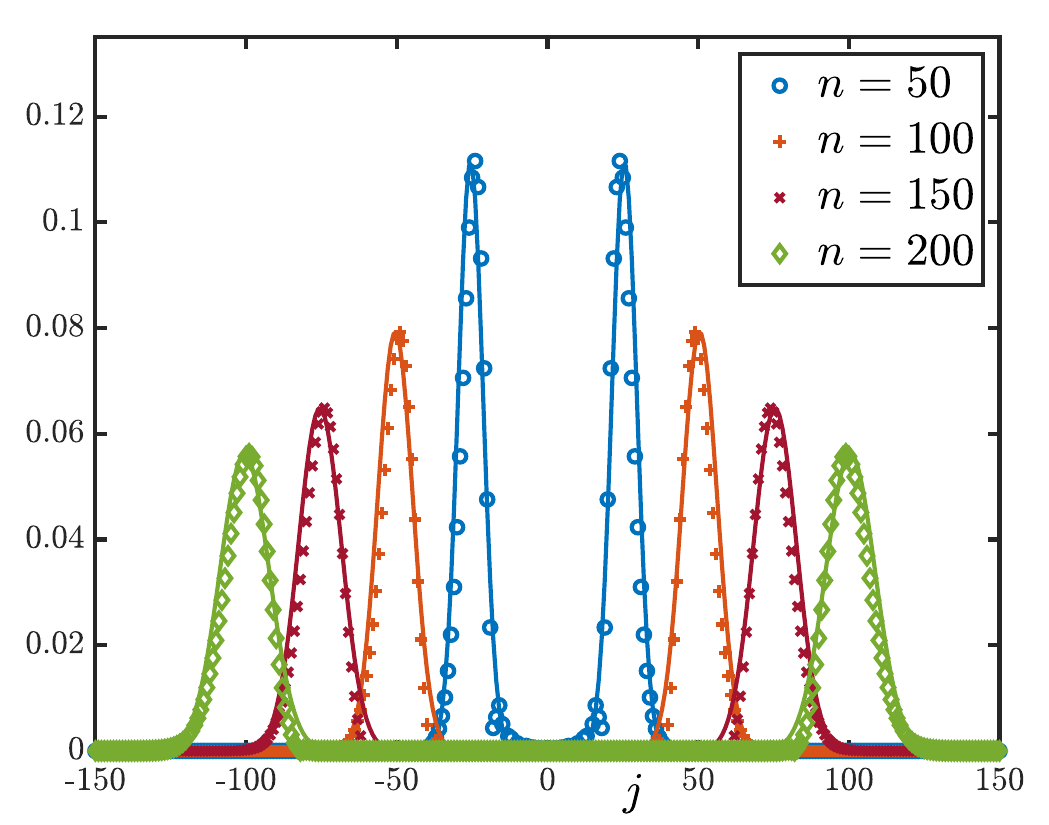}
  \caption{Left: Spectrum (blue curve) $\sigma(\mathcal{L})=F(\mathbb{S}^1)$ for the implicit scheme \eqref{implicit} with $\lambda=1/2$.
  Right: The absolute value of the Green's function (marked points) at different time iterations for the implicit scheme \eqref{implicit} compared
  with two fixed Gaussian profiles centered at $j=\lambda \, n$ and $j=-\lambda \, n$ (solid lines). We started with an initial condition given by
  the Dirac mass $\boldsymbol{\delta}$.}
  \label{fig:Implicit}
\end{figure}

\subsection{Example 3: the $O3$ scheme}

Next, as a third example, we consider the $O3$ scheme \cite{Despres1,Despres2} which is an explicit scheme of order $3$ obtained as the
convex combination of the Lax-Wendroff scheme \cite{LW} and the Beam-Warming scheme \cite{warming}. The Lax-Wendroff scheme is an
explicit finite approximation of \eqref{transport} corresponding to the operators:
\bqs
Q_1^{LW}:=I, \quad Q_0^{LW}:=(1-\lambda^2)I+\frac{\lambda+\lambda^2}{2} \mathbf{S}^{-1}+\frac{-\lambda+\lambda^2}{2} \mathbf{S} \,,
\eqs
for some $\lambda\in(0,1)$. On the other hand, the Beam-Warming scheme is an explicit scheme given by
\bqs
Q_1^{BW}:=I, \quad Q_0^{BW}:=\left(1-\frac{3}{2}\lambda+\frac{1}{2}\lambda^2\right)I+\left(2\lambda-\lambda^2\right) \mathbf{S}^{-1}
+\frac{-\lambda+\lambda^2}{2} \mathbf{S}^{-2} \,,
\eqs
for some $\lambda \in(0,1)$. In both cases, $\lambda$ stands for the ratio $\Delta t / \Delta x$. The $O3$ scheme is then defined as the following
convex combination of the above two schemes
\bqq
Q_1^{O3}:=I, \quad Q_0^{O3}:= \left(1-\delta\right) Q_0^{LW}+\delta Q_0^{BW}, \text{ with } \delta= \frac{1+\lambda}{3}.
\label{O3scheme}
\eqq
The expression for $Q_0^{O3}$ can be simplified and we have
\bqs
Q_0^{O3}=\frac{(2-\lambda)(1-\lambda^2)}{2}I+\frac{\lambda(2-\lambda)(1+\lambda)}{2}\mathbf{S}^{-1}
-\frac{\lambda(1-\lambda^2)}{6}\mathbf{S}^{-2}+\frac{\lambda(2-\lambda)(\lambda-1)}{6}\mathbf{S}.
\eqs
In the notation of \eqref{operateurs}, this corresponds to $r=2$ and $p=1$. Once again, since we are dealing here with an explicit scheme,
Assumptions \ref{hyp:0} and \ref{hyp:3} are  trivially satisfied. The definition \eqref{defF} gives in that case
\bqs
F^{O3} \big( \, {\rm e}^{\, \mathbf{i} \, \xi} \, \big) \, = \, \left(1-\delta\right) F^{LW} \big( \, {\rm e}^{\, \mathbf{i} \, \xi} \, \big)
+\delta F^{BW} \big( \, {\rm e}^{\, \mathbf{i} \, \xi} \, \big),
\eqs
with
\bqs
F^{LW} \big( \, {\rm e}^{\, \mathbf{i} \, \xi} \, \big) \, = \, 1-\lambda^2 + \lambda^2 \cos(\xi)-\mathbf{i} \lambda \sin(\xi),
\eqs
and
\bqs
F^{BW} \big( \, {\rm e}^{\, \mathbf{i} \, \xi} \, \big) \,
= \, 1-\frac{3}{2}\lambda+\frac{1}{2}\lambda^2+\left(2\lambda-\lambda^2\right)\, {\rm e}^{-\, \mathbf{i} \, \xi}
+\frac{-\lambda+\lambda^2}{2} \, {\rm e}^{-\, \mathbf{i} \, 2 \, \xi}.
\eqs
And after some computations, we get
\begin{align*}
\left| F^{LW} \big( \, {\rm e}^{\, i \, \xi} \, \big) \right|^2 \, &= \, 1 -4\lambda^2(1-\lambda^2)\sin^4\left(\frac{\xi}{2}\right) \, , \\
\left| F^{BW} \big( \, {\rm e}^{\, i \, \xi} \, \big) \right|^2 \, &= \, 1 -4\lambda(2-\lambda)(1-\lambda)^2\sin^4\left(\frac{\xi}{2}\right) \,,
\end{align*}
from which we deduce by convexity that $F^{O3} (\kappa)$ belongs to $\overline{\D}$ for all $\kappa\in \mathbb{S}^1$. In fact, further computations lead to
\bqs
\left| F^{O3} \big( \, {\rm e}^{\, \mathbf{i} \, \xi} \, \big) \right|^2 \, = \, 
1- \frac{4}{9}\lambda(2-\lambda)(1-\lambda^2)\sin^4\left(\frac{\xi}{2}\right) \left( 3+4\lambda(1-\lambda)\sin^2\left(\frac{\xi}{2}\right)\right) \, ,
\eqs
such that $F^{O3} (\kappa)$ belongs to $\mathbb{S}^1$ only when $\kappa=1$, that is $K=1$ in the notation of Assumption~\ref{hyp:2} with 
$\underline{\kappa}_1=1$. We find that the relation \eqref{hyp:stabilite2} is satisfied with:
$$
\alpha_1 \, = \, \lambda \, ,\quad \beta_1 \, =  \, \frac{\lambda(2-\lambda)(1-\lambda^2)}{24} \, , \quad \mu_1 \, =\, 2,
$$
and that $\beta_1>0$ for $\lambda\in(0,1)$. Furthermore, note that Assumption~\ref{hyp:4} is trivially satisfied since $\mathcal{I}_1=\left\{\underline{z}_1\right\}$
with $\underline{z}_1=1$. The spectral curve $F^{O3} (\mathbb{S}^1)$ is illustrated in Figure~\ref{fig:O3} in the case $\lambda=1/2$.

\begin{figure}[t!]
\centering
\includegraphics[width=.35\textwidth]{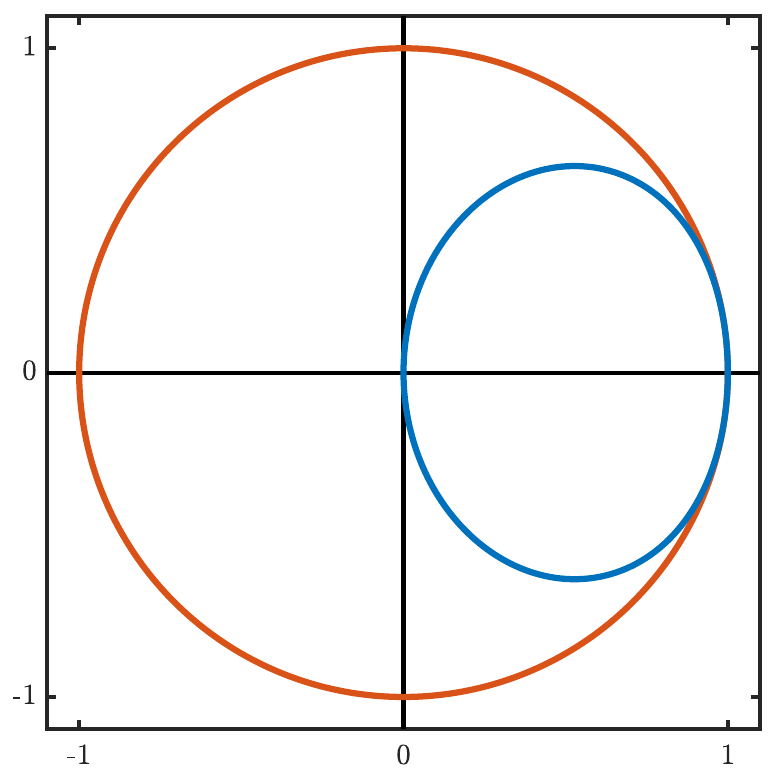}\hspace{0.5cm}
\includegraphics[width=.5\textwidth]{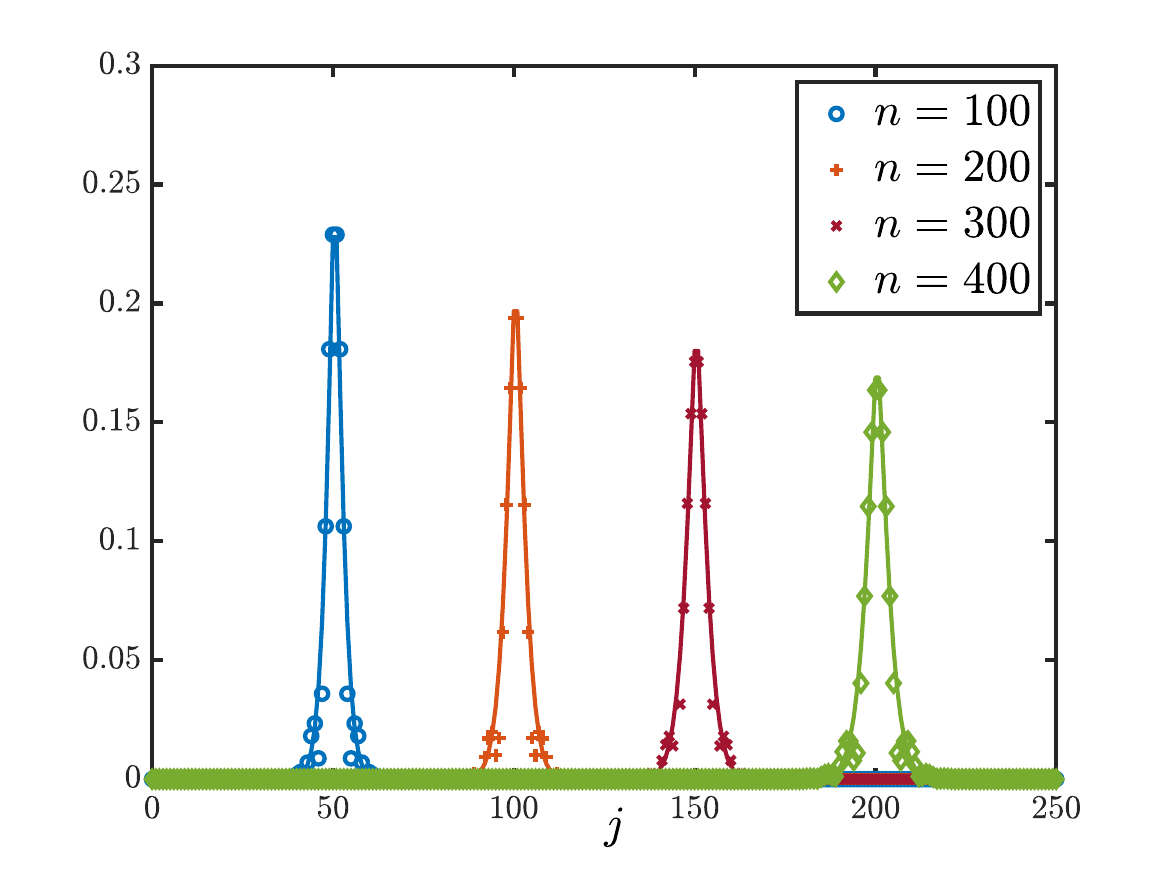}
  \caption{Left: Spectrum (blue curve) $\sigma(\mathcal{L})=F(\mathbb{S}^1)$ for the $O3$ scheme \eqref{O3scheme} with $\lambda=1/2$.
  Right: The absolute value of the Green's function (marked points) at different time iterations for the $O3$ scheme \eqref{O3scheme} compared
  with a fixed generalized Gaussian profile $\mathscr{H}_j^n$ \eqref{defHjn} centered at $j=\lambda \, n$  (solide lines). We started with an initial
  condition given by the Dirac mass $\boldsymbol{\delta}$.}
  \label{fig:O3}
\end{figure}

Next, we have that
\bqs
\mathbb{A}_{-2}(z)=\frac{\lambda(1-\lambda^2)}{6} \, , \quad \text{ and } \quad \mathbb{A}_{1}(z)=\frac{\lambda(2-\lambda)(1-\lambda)}{6} \, ,
\eqs
such that Assumption~\ref{hyp:2} also holds true for each $\lambda\in(0,1)$. We can therefore apply Theorem \ref{thm:1} which, in the case of
\eqref{O3scheme}, yields the uniform generalized Gaussian bound:
$$
\big| \, \mathcal{G}^n_j \, \big| \, \le \, \dfrac{C}{n^{\frac{1}{4}}} \,  \exp \left( - \, c \, \, \dfrac{|j \, - \, \lambda \, n|^{\frac{4}{3}}}{n^{\frac{1}{3}}} \right)\, , \,
\quad n \geq 1\, , \quad j \in \Z \, .
$$
This behavior is illustrated in Figure \ref{fig:O3} in the case $\lambda=1/2$ where we compare the Green's function $\mathscr{G}_j^n$ to the generalized
Gaussian profile $\mathscr{H}_j^n$ defined as
\bqq
\mathscr{H}_j^n:= \dfrac{C}{n^{\frac{1}{4}}} \,  \exp \left( - \, c \, \, \dfrac{|j \, - \, \lambda \, n|^{\frac{4}{3}}}{n^{\frac{1}{3}}} \right)\:,  \, \quad n\geq 1\, , \quad j\in\Z\,,
\label{defHjn}
\eqq
with two fixed constants $C>0$ and $c>0$ independent of $j$ and $n$ which we have set to $C=0.8$ and $c=1.1765$ respectively. Note that
the scaling factor $n^{-\frac{1}{4}}$ in the generalized Gaussian bound is further demonstrated in Figure~\ref{fig:logplot} where we represent
$\sup_{j\in\Z} \big| \, \mathcal{G}^n_j \, \big|$ in logarithmic scale. Using a best linear fit, we numerically obtain a slope of $-0.2496$ which is
in good agreement with the theory.

\begin{figure}[t!]
\centering
\includegraphics[width=.5\textwidth]{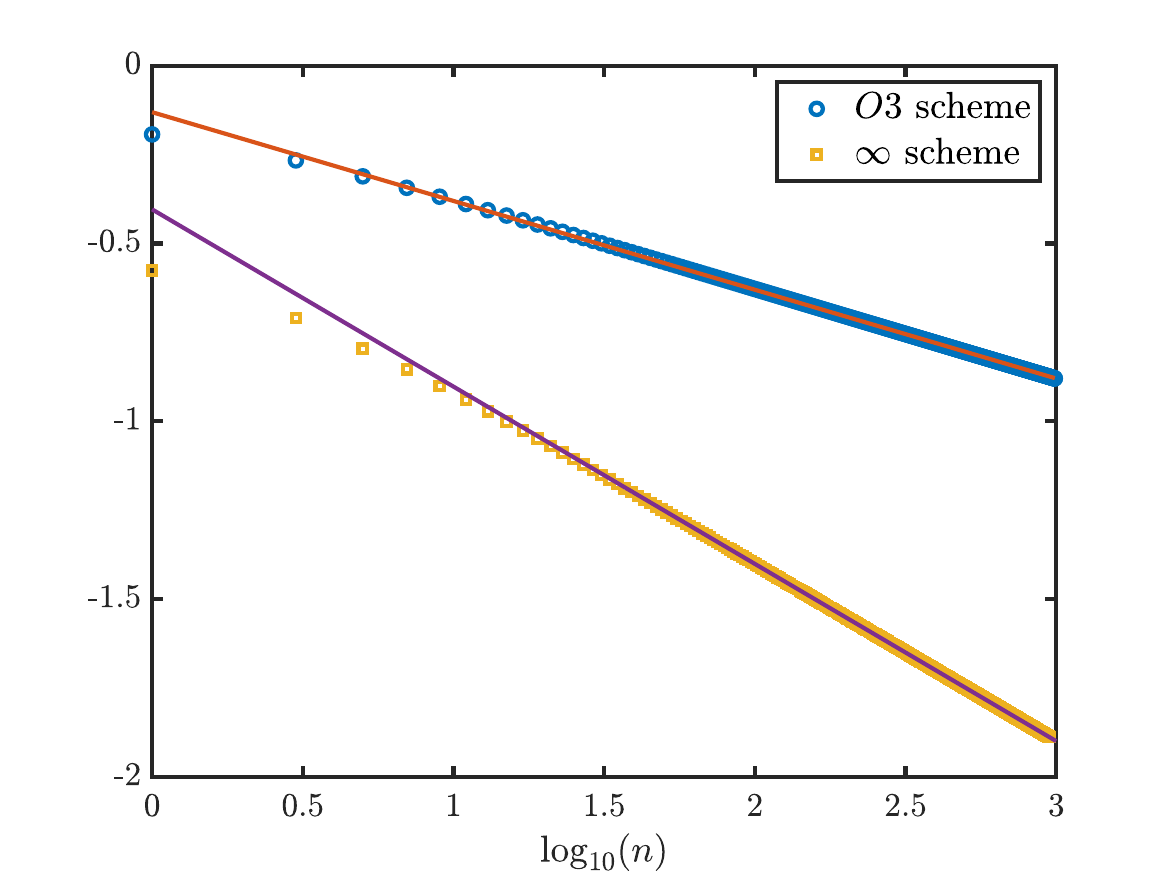}
  \caption{Illustration of the scaling factor in the generalized Gaussian bounds provided by Theorem~\ref{thm:1} in the case of the $O3$ scheme (blue
  circles) and the $\infty$ scheme (orange squares). We plot $\log_{10}\left(\sup_{j\in\Z} \big| \, \mathcal{G}^n_j \, \big|\right)$ as a function of $\log_{10}(n)$
  together with a best linear fit for each scheme for $n$ ranging from $1$ to $10^3$. For the $O3$ scheme we find a slope of $-0.2496$ while for the $\infty$
  scheme we find a slope of $-0.4983$ which compare well with the theoretical $-1/4$ and $-1/2$ scaling factors.}
  \label{fig:logplot}
\end{figure}

\subsection{Example 4: the $\infty$ scheme}

We complete our series of examples with a last explicit scheme, which we shall call the $\infty$ scheme in reference to the spectral curve
associated with it (see Figure~\ref{fig:infini} for an illustration). The $\infty$ scheme corresponds to the operators:
\bqq
Q_1^{\infty}:=I, \quad Q_0^{\infty} :=
\frac{1}{16}\mathbf{S}^{-3}+\frac{1}{4}\mathbf{S}^{-2}+\frac{7}{16} \mathbf{S}^{-1}+\frac{7}{16} \mathbf{S}-\frac{1}{4}\mathbf{S}^{2}+\frac{1}{16}\mathbf{S}^{3} \,,
\label{infinischeme}
\eqq
such that we have $r=p=3$ in that case. The definition \eqref{defF} reduces here to:
$$
F^\infty \big( \, {\rm e}^{\, \mathbf{i} \, \xi} \, \big) \, = \,
\dfrac{7}{8} \, \cos (\xi) \, + \, \dfrac{1}{8} \, \cos(3 \, \xi) \, -  \, \dfrac{\mathbf{i}}{2} \, \sin (2 \, \xi) \, .
$$
Computing:
$$
\Big| \, F \big( \, {\rm e}^{\, \mathbf{i} \, \xi} \, \big) \, \Big|^2 \, = 1 -\sin^2(\xi) \left(1-\frac{\sin^2(2\xi)}{16}\right) \, ,
$$
we find that $F(\kappa)$ belongs to $\Dbar$ for all $\kappa \in \cercle$, and $F(\kappa)$ belongs to $\cercle$ for such $\kappa$ if and
only if $\kappa = \pm 1$. We thus have \eqref{hyp:stabilite1} with $\underline{\kappa}_1 = 1$ and $\underline{\kappa}_2 = -1$, and
the reader can check that \eqref{hyp:stabilite2} is satisfied with:
$$
\alpha_1 \, = \,1 \, ,\, \alpha_2 \, = \,-1 \, ,\quad \beta_1 \, = \, \beta_2 \, := \, \dfrac{1}{2} \, , \quad \mu_1 \, = \, \mu_2 \,=\,1 \,,
$$
which means that Assumption \ref{hyp:1} is satisfied. As a consequence, we also have $\underline{z}_1 =\underline{z}_2 =1$, and we immediately
see that Assumption \ref{hyp:4} is also satisfied: both sets $\mathcal{I}_1$ and $\mathcal{I}_2$ equal $\{ 1,2 \}$ and $\alpha_1 \, \alpha_2 =-1<0$.
This is entirely similar to the behavior of the implicit scheme \eqref{implicit} except that \eqref{infinischeme} corresponds to a finitely supported
convolution operator. The spectral curve $F(\mathbb{S}^1)$ is illustrated in Figure~\ref{fig:infini}. Since the modulus of $F(\kappa)$ attains its
maximum at two points of $\cercle$ and that $\alpha_1\neq \alpha_2$, we cannot apply the uniform Gaussian bound from \cite{Diaconis-SaloffCoste}
or \cite{RSC2}. The improvement of Theorem \ref{thm:1} is thus meaningful here.

As far as Assumption \ref{hyp:2} is concerned, we compute:
$$
\A_{-3} (z) \, = \, - \, \dfrac{1}{16} \, ,\quad \A_3 (z) \, = \,- \dfrac{1}{16}\, ,
$$
so Assumption \ref{hyp:2} is satisfied again. We also note that Assumptions \ref{hyp:0} and \ref{hyp:3} are satisfied since the $\infty$ scheme is explicit.
We can therefore apply Theorem \ref{thm:1} which, in the case of \eqref{infinischeme}, yields the uniform Gaussian bound:
$$
\big| \, \mathcal{G}^n_j \, \big| \, \le \, \dfrac{C}{\sqrt{n}} \, \left( \exp \left( - \, c \, \, \dfrac{(j \, + \,  n)^2}{n} \right)
\, + \, \exp \left( - \, c \, \, \dfrac{(j \, - \, n)^2}{n} \right) \right) \, \quad n\geq1 \, \quad j\in\Z\, .
$$
This behavior is illustrated in Figure \ref{fig:infini}.  Note that the scaling factor $n^{-\frac{1}{2}}$ in the Gaussian bound is further demonstrated in
Figure~\ref{fig:logplot} where we represent $\sup_{j\in\Z} \big| \, \mathcal{G}^n_j \, \big|$ in logarithmic scale. Using a best linear fit, we numerically
obtain a slope of $-0.4983$ which is in good agreement with the theory.

\begin{figure}[t!]
\centering
\includegraphics[width=.39\textwidth]{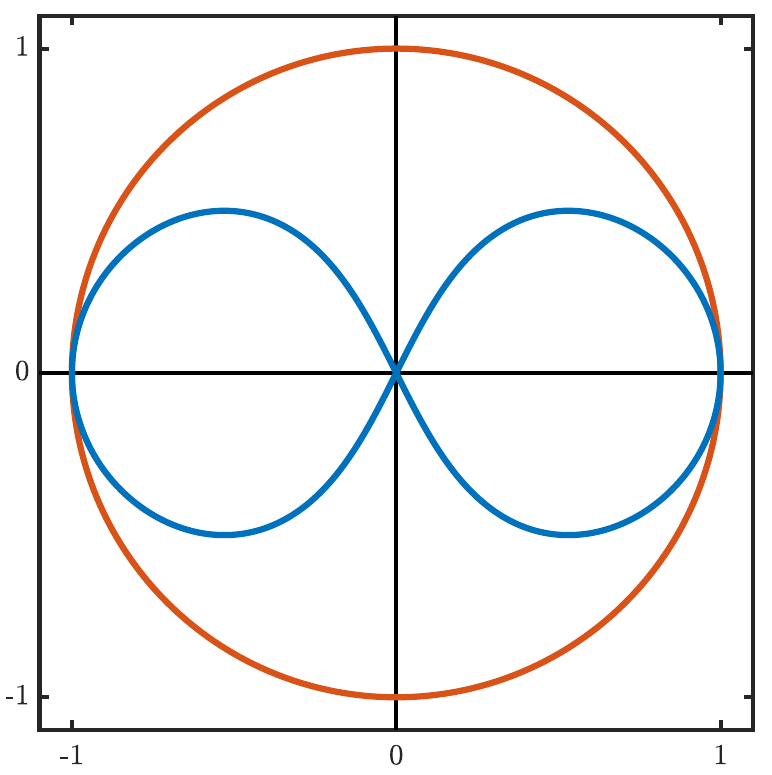}\hspace{0.5cm}
\includegraphics[width=.55\textwidth]{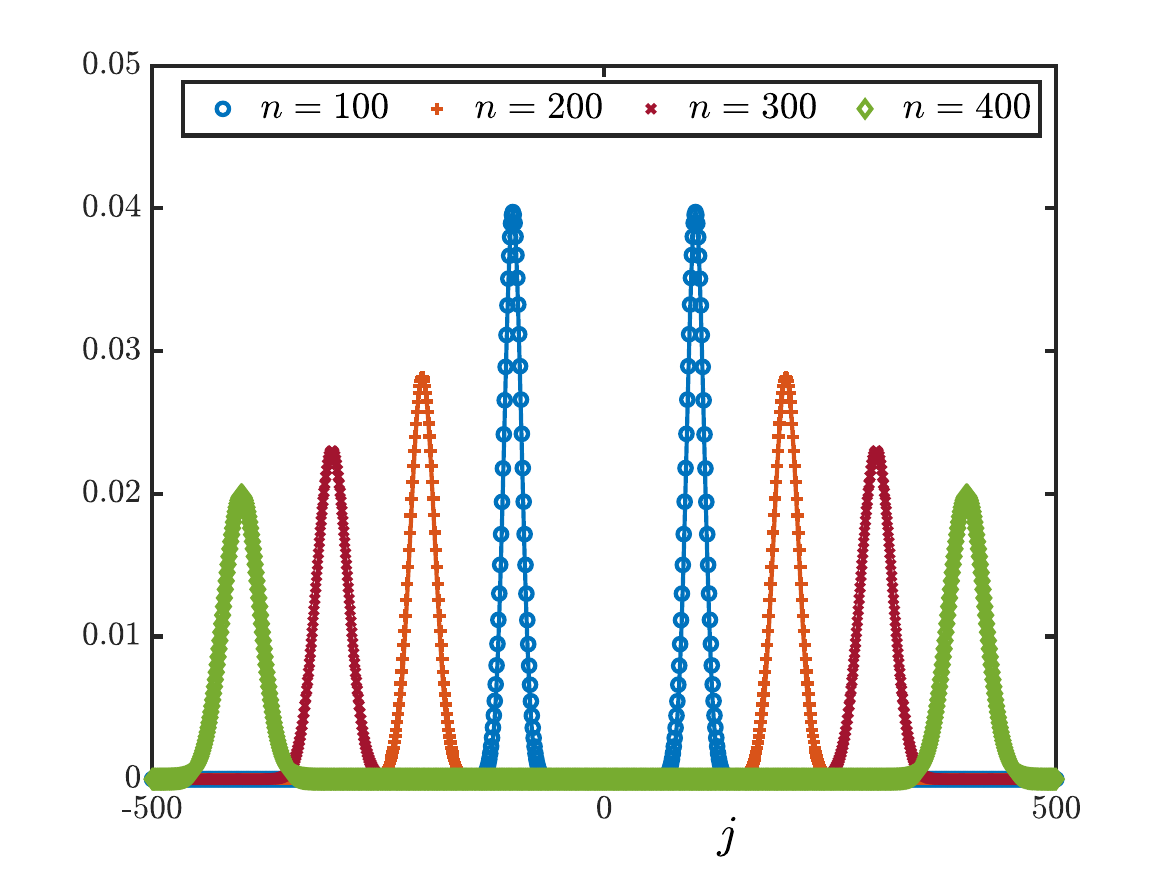}
  \caption{Left: Spectrum (blue curve) $\sigma(\mathcal{L})=F(\mathbb{S}^1)$ for the $\infty$ scheme \eqref{infinischeme}. Right: The absolute value
  of the Green's function (marked points) at different time iterations for the $\infty$ scheme \eqref{infinischeme} compared with two fixed Gaussian
  profiles centered at $j=n$ and $j=-n$ (solid lines). We started with an initial condition given by the Dirac mass $\boldsymbol{\delta}$.}
  \label{fig:infini}
\end{figure}

\subsection{Extending Theorem~\ref{thm:1} to the case where some $\alpha_k$ can be zero}

In the explicit case, it is possible to extend Theorem~\ref{thm:1} to the case where some $\alpha_k$ can be zero. The idea is to shift the operator
$\mathscr{L}=Q_0$ such that for some integer $J\in\Z$ the shifted operator $\widetilde{\mathscr{L}}:=\mathbf{S}^J Q_0$ verifies the assumptions
of Theorem~\ref{thm:1} for which we obtain a generalized Gaussian estimate which is necessarily of the form
\begin{equation*}
\forall \, n \in \N^* \, ,\quad \forall \, j \in \Z \, ,\quad \big| \, \left(\widetilde{\mathscr{L}}^n \, \boldsymbol{\delta}\right)_j \, \big| \, \le \, C \, \, \sum_{k=1}^K \,
\dfrac{1}{n^{1/(2\, \mu_k)}} \, \exp \left( - \, c \, \left( \dfrac{|j \, - \, (\alpha_k+J) \, n|}{n^{1/(2\, \mu_k)}} \right)^{\frac{2\, \mu_k}{2\, \mu_k-1}} \right) \, .
\end{equation*}
But we observe that
\bqs
\forall \, n \in \N^* \, ,\quad \forall \, j \in \Z \, ,\quad \left({\widetilde{\mathscr{L}}}^n\, \boldsymbol{\delta}\right)_j \, = \, 
\left(\mathcal{L}^n \, \boldsymbol{\delta}\right)_{j-nJ}\, ,
\eqs
which gives a generalized Gaussian estimate for $\mathcal{L}^n \, \boldsymbol{\delta}$. More precisely, we introduce a relaxed version of
Assumption~\ref{hyp:1} where some $\alpha_k$ could vanish.

\begin{assumption}
\label{hyp:5}
The function $F$ defined in \eqref{defF} satisfies Assumption~\ref{hyp:1} but possibly with some $\alpha_k$ in \eqref{hyp:stabilite2} that can be zero.
\end{assumption}

\noindent We then have the following corollary.

\begin{corollary}\label{coro2}
Assume that $Q_1$ is the identity, and that $Q_0$ satisfies Assumption~\ref{hyp:5}. If there exists some $J \in \Z$ such that $\mathbf{S}^J \, Q_0$
satisfies Assumptions~\ref{hyp:1},~\ref{hyp:2} and~\ref{hyp:4}, then there exist two constants $C>0$ and $c>0$ such that
\begin{equation*}
\forall \, n \in \N^* \, ,\quad \forall \, j \in \Z \, ,\quad \big| \, \left(\mathcal{L}^n \, \boldsymbol{\delta}\right)_j \, \big| \, \le \, C \, \, \sum_{k=1}^K \,
\dfrac{1}{n^{1/(2\, \mu_k)}} \, \exp \left( - \, c \, \left( \dfrac{|j \, - \, \alpha_k \, n|}{n^{1/(2\, \mu_k)}} \right)^{\frac{2\, \mu_k}{2\, \mu_k-1}} \right) \, .
\end{equation*}
\end{corollary}

Typically, when $Q_1$ is the identity and $K=1$ with  $\alpha_1=0$ in Assumption~\ref{hyp:1}, we can always apply Corollary~\ref{coro2} by choosing
$J$ sufficiently large (in that case, we have $r=0$ and $p>0$ for the shifted operator $\mathbf{S}^J \, Q_0$, $\A_0(z)=z$ and $\A_p(z)$ is a nonzero
constant).

\subsection{Further extensions}

For the sake of clarity, we have focused here on the case of scalar iterations, but the techniques developed in this article should apply to some
multistep iterations of the form
$$
\begin{cases}
Q_{s+1} \, u^{n+s+1} \, = \, Q_s \, u^{n+s} \, + \, \cdots \, + \, Q_0 \, u^n \, ,& n \in \N \, ,\\
u^0,\dots,u^s \in \ell^2(\Z) \, ,&
\end{cases}
$$
where $s \in \N$ is a given fixed integer and there are now $s+2$ convolution operators with finite support involved. Of course, the statement
of the assumptions should be suitably modified (for instance, Assumption \ref{hyp:0} now bears on $Q_{s+1}$ and not on $Q_1$).

We have focused here on numerical schemes for which the modulus of the amplification factor $F$ is not constant on $\cercle$ and such that
the local behavior of $F$ near a point where its modulus attains its maximum is dictated as in \cite{Thomee}. We recall that for explicit operators
($Q_1=I$) of the form \eqref{operateurs}, the main result in \cite{Thomee} shows that Assumption \ref{hyp:1} is necessary and sufficient for $Q_0$
to be power bounded from $\ell^1(\Z;\C)$ to $\ell^1(\Z;\C)$ (or equivalently from $\ell^\infty(\Z;\C)$ to $\ell^\infty(\Z;\C)$). A major advantage of the
approach developed in this article is that it gets rid (more or less) of Fourier analysis. In particular, we have used the above strategy in \cite{CFbord}
for proving a sharp stability result on a discretized transport equation with numerical boundary conditions under a degenerate version of the so-called
Kreiss-Lopatinskii condition (see \cite{gks,kreiss-wu,gko} for some background on numerical boundary conditions for hyperbolic equations).
The problem considered in \cite{CFbord} is set in $\ell^2(\N;\C)$ rather than $\ell^2(\Z;\C)$, which makes many Fourier based techniques useless.
Eventually, the above strategy is used in \cite{coeuret} to sharpen the local limit theorem of \cite{RSC1} and prove generalized Gaussian estimates
for the remainder in the local limit theorem of \cite{RSC1}.

\section*{Acknowledgements}

We are indebted to the referee for an extremely careful reading of an earlier version of the manuscript, which has greatly helped us to improve the
presentation. We also wish to thank Lucas C{\oe}uret for his numerous comments and for suggesting the argument of Corollary \ref{coro2} (which is
also used in \cite{coeuret}).

\appendix
\section{Appendix. Proof of intermediate and related results}
\label{appendix}

\subsection{Behavior of the amplification factor on the unit circle}

The aim of this subsection is to prove the following result, which generalizes the classification obtained by Thom\'ee \cite[page 280]{Thomee} for
trigonometric polynomials. Lemma \ref{lem:comportementF} below shows that this classification only depends on the holomorphy of the considered
function on a neighborhood of the unit circle.

\begin{lemma}
\label{lem:comportementF}
Let $\delta>0$ and let $f$ be a holomorphic function on the annulus:
$$
\big\{ \, \zeta \in \C \, | \, {\rm e}^{\, - \, \delta} \, < \, |\zeta| \, < \, {\rm e}^{\, \delta} \big\} \, ,
$$
that satisfies:
\begin{equation}
\label{conditionstabilite}
\sup_{\kappa \in \cercle} \, |f(\kappa)| \, = \, 1 \, .
\end{equation}
Then one of the following is satisfied:
\begin{itemize}
 \item $f(\kappa)$ has modulus $1$ for any $\kappa \in \cercle$,
 \item there exists a finite set of points $\{ \underline{\kappa}_1,\dots,\underline{\kappa}_K \}$, $K \ge 1$, in $\cercle$ such that
 $f(\underline{\kappa}_k)$ has modulus $1$ for any $k \in \{ 1,\dots,K \}$ and:
$$
\forall \, \kappa \in \cercle \setminus \big\{ \underline{\kappa}_1,\dots,\underline{\kappa}_K \big\} \, ,\quad \big| \, f(\kappa) \, \big| \, < \, 1 \, .
$$
\end{itemize}
\end{lemma}

\noindent Assumption \ref{hyp:1} in our work thus excludes the case where the rational function $F$ in \eqref{defF} has modulus $1$ on the whole
unit circle $\cercle$ (an example of such functions are the so-called Blaschke products \cite{rudin}).

\begin{proof}[Proof of Lemma \ref{lem:comportementF}]
We first consider a point $\underline{\kappa} \in \cercle$ such that $f(\underline{\kappa})$ has modulus $1$. Writing:
$$
f \big( \underline{\kappa} \, {\rm e}^{\, \zeta} \big) \, = \, f(\underline{\kappa}) \, \exp \, ( \, g(\zeta) \, ) \, ,
$$
for some holomorphic function $g$ on a neighborhood of $0$, we can conclude that there exists a power series $\sum a_n \, x^n$ with real
coefficients and a positive radius of convergence, such that for any sufficiently small real number $\xi$, there holds:
$$
\big| \, f \big( \underline{\kappa} \, {\rm e}^{\, \mathbf{i} \, \xi} \big) \, \big| \, = \, \exp \, \left( \, \sum_{n=0}^{+ \, \infty} \, a_n \, \xi^{\, n} \, \right) \, .
$$
Since $f(\underline{\kappa})$ has modulus $1$, we have $a_0=0$. Using now the condition \eqref{conditionstabilite}, we can conclude that
either all the coefficients $a_n$, $n \in \N$, are zero or there exists a smallest nonzero \emph{even} integer $2 \, p$ such that:
$$
a_{2 \, p} \, < \, 0 \qquad \text{\rm and } \qquad a_0 \, = \, a_1 \, = \, \cdots \, = \, a_{2\, p \, - \, 1} \, = \, 0 \, .
$$
In particular, for any $\xi$ in some interval $(-\alpha,\alpha)$ with $\alpha>0$, there holds:
$$
\big| \, f(\underline{\kappa} \, {\rm e}^{\, \mathbf{i} \, \xi}) \, \big| \, \le \, 1 \, - \, \dfrac{|a_{2\, p}|}{2} \, \xi^{\, 2 \, p} \, .
$$
If all the coefficients $a_n$ are zero, we can conclude that there exists an interval $(-\alpha,\alpha)$ with $\alpha>0$, such that for
any $\xi \in (-\alpha,\alpha)$, the modulus of $f(\underline{\kappa} \, \exp (\mathbf{i} \, \xi))$ equals $1$. We have thus classified the two
possible behaviors of the modulus of $f$ near any point $\underline{\kappa} \in \cercle$ at which the modulus of $f$ attains its maximum.
\bigskip

With this preliminary fact at our disposal, let us now consider the set:
$$
\mathcal{O} \, := \, \Big\{ \, \kappa \in \cercle \, | \, \exists \, \alpha \, > \, 0 \, , \, \forall \, \xi \in (-\alpha,\alpha) \, , \quad
\big| \, f \big( \underline{\kappa} \, {\rm e}^{\, \mathbf{i} \, \xi} \big) \, \big| \, = \, 1 \, \Big\} \, .
$$
The set $\mathcal{O}$ is clearly open and the previous argument on the local behavior of $|f|$ near any point of $\cercle$ where it attains its
maximum shows that the set $\mathcal{O}$ is closed. Since $\cercle$ is connected, $\mathcal{O}$ is either empty or equal to $\cercle$.

Let us now prove the claim of Lemma \ref{lem:comportementF}. The case $\mathcal{O} = \cercle$ corresponds to the first possibility where
$f(\kappa)$ has modulus $1$ for any $\kappa \in \cercle$. We thus assume that the modulus of $f$ is non-constant on $\cercle$ and therefore
$\mathcal{O}$ is empty. Then any point $\underline{\kappa}$ admits an open neighborhood $\mathcal{V}$ in $\cercle$ such that:
$$
\forall \, \kappa \in \mathcal{V} \setminus \{ \underline{\kappa} \} \, ,\quad |\, f(\kappa) \, | \, < \, 1 \, .
$$
The conclusion follows from the compactness of $\cercle$.
\end{proof}

\subsection{The Bernstein type inequality}

This subsection is devoted to the proof of Lemma \ref{lem:Bernstein}. A proof of Lemma \ref{lem:Bernstein} is provided in \cite{CFbord} in
the particular case where the coefficients $a_{\ell,0}$ are real and $K=1$, $\underline{\kappa}_1=1$. We explain below why the result holds
in the broader context of complex valued sequences and an arbitrary number $K$ of tangency points. The result of Lemma \ref{lem:Bernstein}
is a variation on the so-called Courant-Friedrichs-Lewy condition for numerical approximations of hyperbolic equations \cite{cfl}. This condition,
which bears on continuous and numerical domains of dependence, is known to be related to the von Neumann stability condition and the
Bernstein inequality for trigonometric polynomials, see, for instance, \cite{QueffelecZarouf} for a proof and historical comments on the Bernstein
inequality. We refer to \cite[page 152]{strang2} for the link between the CFL condition and the Bernstein inequality.

\begin{proof}[Proof of Lemma \ref{lem:Bernstein}]
Since $Q_1$ is the identity, we have $F=\widehat{Q}_0$. Given an integer $k \in \{ 1,\dots,K \}$, we introduce a polynomial function $P_k$ defined by:
$$
\forall \, z \in \C \, ,\quad P_k(z) \, := \, z^{\, r}\, \dfrac{F (z \, \underline{\kappa}_k)}{F (\underline{\kappa}_k)} \, = \,
\dfrac{1}{\sum_{\ell=-r}^p \, a_{\ell,0} \, \underline{\kappa}_k^\ell} \,
\sum_{\ell=-r}^p \, a_{\ell,0} \, \underline{\kappa}_k^\ell \, z^{\, \ell \, + \, r} \, .
$$
Assumption \ref{hyp:1} implies that $P_k$ is a non-constant holomorphic function on $\C$ and, furthermore, the modulus of $P_k$ is not larger than $1$
on $\cercle$. By the maximum principle for holomorphic functions \cite[chapter 12]{rudin}, $P_k$ maps $\D$ onto $\D$. In particular, there holds:
$$
\forall \, \epsilon \in (0,1) \, ,\quad |P_k(1-\epsilon)| \, < \, 1 \, ,
$$
and we easily obtain $P_k(1)=1$. We now use \eqref{hyp:stabilite2} and compute:
\begin{equation}
\label{DLPk}
P_k({\rm e}^{\, \mathbf{i} \, \xi}) \, = \, \exp \left( \, \mathbf{i} \, (r-\alpha_k) \, \xi \, - \, \beta_k \, \xi^{\, 2 \, \mu_k} \, + \, O \Big( \xi^{\, 2 \, \mu_k+1} \Big) \right) \, ,
\end{equation}
from which we get $P_k'(1)=r-\alpha_k$ and the asymptotic expansion:
$$
|P_k(1-\epsilon)| \, = \, 1 - \, (r-\alpha_k) \, \epsilon \, + \, O(\epsilon^2) \, ,
$$
as (the real number) $\epsilon$ tends to zero. Arguing by contradiction, this gives $r-\alpha_k \ge 0$ since $P_k$ maps $\D$ onto $\D$. We now show
that $\alpha_k$ can not equal $r$ and argue again by contradiction. Assuming $\alpha_k=r$, \eqref{DLPk} reduces to:
$$
P_k({\rm e}^{\, \mathbf{i} \, \xi}) \, = \, \exp \left( \, - \, \beta_k \, \xi^{\, 2 \, \mu_k} \, + \, O \Big( \xi^{\, 2 \, \mu_k+1} \Big) \right) \, ,
$$
and we thus obtain\footnote{The crucial fact here is that \eqref{DLPk} holds for either real or complex values of $\xi$.}, for real positive values of $\epsilon$
tending to zero:
$$
P_k({\rm e}^{\, \mathbf{i} \, \epsilon \, \exp (\mathbf{i} \, \pi /(2\, \mu_k))}) \, = \,
\exp \left( \, \beta_k \, \epsilon^{\, 2 \, \mu_k} \, + \, O \Big( \epsilon^{\, 2 \, \mu_k+1} \Big) \right) \, .
$$
This leads to a contradiction because $\exp(\mathbf{i} \, \epsilon \, \exp (\mathbf{i} \, \pi /(2\, \mu_k)))$ belongs to $\D$ for any $\epsilon>0$ and
$\beta_k$ has positive real part.

In order to prove the inequality $\alpha_k>- \, p$, we introduce the complex reciprocal polynomial $Q_k$ of $P_k$ (see, again, \cite{QueffelecZarouf}):
$$
Q_k(z) \, := \, z^{\, p \, + \, r} \, \overline{P_k(1/\overline{z})} \, = \, \dfrac{1}{\sum_{\ell=-r}^p \, \overline{a_{\ell,0}} \, \overline{\underline{\kappa}_k}^\ell} \,
\sum_{\ell=-r}^p \, \overline{a_{\ell,0}} \, \overline{\underline{\kappa}_k}^\ell \, z^{\, p \, - \, \ell} \, .
$$
By the same argument as the one used for $P_k$, we have $Q_k(1)=1$ and $Q_k$ maps $\D$ onto $\D$. We also compute:
$$
Q_k'(1) \, = \, p \, + \, r \, - \, \overline{P_k'(1)} \, = \, p \, + \, \alpha_k \, ,
$$
and we therefore have $p \, + \, \alpha_k \ge 0$ because $Q_k$ maps $\D$ onto $\D$. We now show that $\alpha_k$ can not equal $- \, p$ and argue
again by contradiction. Assuming $\alpha_k =- \, p$, we use \eqref{DLPk} and compute:
\begin{align*}
Q_k({\rm e}^{\, \mathbf{i} \, \epsilon \, \exp (\mathbf{i} \, \pi /(2\, \mu_k))}) &= \,
{\rm e}^{\, \mathbf{i} \, (p \, + \, r) \, \epsilon \, \exp (\mathbf{i} \, \pi /(2\, \mu_k))} \,
\overline{P_k \big( {\rm e}^{\, \mathbf{i} \, \epsilon \, \exp (- \, \mathbf{i} \, \pi /(2\, \mu_k))} \big)} \\
&= \, {\rm e}^{\, \mathbf{i} \, (p \, + \, r) \, \epsilon \, \exp (\mathbf{i} \, \pi /(2\, \mu_k))} \,
\overline{\exp \left( \mathbf{i} \, (r-\alpha_k) \, \epsilon \, \exp (- \, \mathbf{i} \, \pi /(2\, \mu_k)) + \beta_k \, \epsilon^{\, 2 \, \mu_k}
+ O \Big( \epsilon^{\, 2 \, \mu_k+1} \Big) \right)} \\
&= \, \exp \left( \, \overline{\beta_k} \, \epsilon^{\, 2 \, \mu_k} \, + \, O \Big( \epsilon^{\, 2 \, \mu_k+1} \Big) \right) \, .
\end{align*}
We are again led to a contradiction. The proof of Lemma \ref{lem:Bernstein} is thus complete.
\end{proof}

\newpage

\bibliographystyle{alpha}
\bibliography{CF}
\end{document}